\DeclareMathOperator{\Crit}{\operatorname{Crit}}
\DeclareMathOperator{\Proj}{\operatorname{Proj}}
\DeclareMathOperator{\Zer}{\operatorname{Zer}}
\DeclareMathOperator{\Pol}{\operatorname{Pol}}
\DeclareMathOperator{\Sym}{\operatorname{Sym}}
\DeclareMathOperator{\Res}{\operatorname{Res}}
\DeclareMathOperator{\Hom}{\operatorname{Hom}}
\theoremstyle{definition}
\newtheorem{theorem}{Theorem}[section]
\newtheorem{definition}[theorem]{Definition}
\newtheorem{lemma}[theorem]{Lemma}
\newtheorem{proposition}[theorem]{Proposition}
\newtheorem{assumption}[theorem]{Assumption}
\title{Voros symbols as cluster coordinates}
\author{
  Dylan G.L. Allegretti
}
\date{}
\begin{document}

\maketitle

\begin{abstract}
We show that the Borel sums of the Voros symbols considered in the theory of exact WKB analysis arise naturally as Fock-Goncharov coordinates of framed $PGL_2(\mathbb{C})$-local systems on a marked bordered surface. Using this result, we show that these Borel sums can be meromorphically continued to any point of~$\mathbb{C}^*$, and we prove an asymptotic property of the monodromy map introduced in collaboration with Tom~Bridgeland.
\end{abstract}

\tableofcontents

\section{Introduction}

\subsection{Voros symbols and their cluster nature}

The WKB method was originally introduced by Wentzel, Kramers, and Brillouin in 1926 as a way of finding approximate solutions of the Schr\"odinger equation
\begin{align}
\label{eqn:schrodingerintro}
\hbar^2\frac{d^2}{dz^2}y(z,\hbar)-Q(z,\hbar)y(z,\hbar)=0
\end{align}
in the semiclassical limit $\hbar\ll1$. In 1983, Voros~\cite{Voros} described a reformulation of this method which can be used to construct exact solutions. In this reformulation, one begins by constructing formal series solutions of~\eqref{eqn:schrodingerintro}. These formal series are generally divergent, and genuine analytic solutions are obtained by taking Borel sums.

In this paper, we use the exact WKB method to study the Schr\"odinger equation defined on an arbitrary compact Riemann surface~$S$. By this we mean a collection of equations of the form~\eqref{eqn:schrodingerintro}, one for each local coordinate $z$ on~$S$. The potential function $Q(z,\hbar)$ is assumed to be of the form 
\[
Q(z,\hbar)=Q_0(z)+\hbar Q_1(z)+\hbar^2 Q_2(z)+\dots
\]
where each $Q_n(z)$ is a meromorphic function of~$z$ and $Q_n\equiv0$ for $n\gg0$. These local expressions are related in such a way that there is an associated meromorphic quadratic differential given in any local coordinate $z$ by the expression 
\[
\phi(z)=Q_0(z)dz^{\otimes2}.
\]
The properties of this quadratic differential are deeply related to the properties of the corresponding differential equation.

A meromorphic quadratic differential $\phi$ as above gives rise to an object called a \emph{marked bordered surface}. This is defined as a pair $(\mathbb{S},\mathbb{M})$ where $\mathbb{S}$ is a compact oriented surface with boundary and $\mathbb{M}$ is a collection of finitely many marked points on~$\mathbb{S}$ such that every boundary component contains at least one marked point. Given the quadratic differential $\phi$, we define a marked bordered surface where $\mathbb{S}$ is obtained by taking an oriented real blowup of~$S$ at each pole of order $\geq3$ and $\mathbb{M}$ consists of poles of order $\leq2$ together with points on~$\partial\mathbb{S}$ corresponding to the Stokes lines of the differential equation.

For a generic quadratic differential $\phi$, we also get an \emph{ideal triangulation} of the associated marked bordered surface. By this we mean a triangulation of $\mathbb{S}$ whose vertices are precisely the points of~$\mathbb{M}$. Given the differential $\phi$, we define an ideal triangulation by considering the foliation of $S$ whose leaves are the curves 
\[
\Im\int^z\sqrt{Q_0(z)}dz=\text{constant}.
\]
We are particularly interested in those leaves whose endpoints are zeros of the quadratic differential. A leaf which connects two zeros of~$\phi$ is known as a \emph{saddle trajectory}. By taking the differential $\phi$ to be generic, we may assume that there are no saddle trajectories in the foliation. Such a differential is said to be \emph{saddle-free}. For a complete saddle-free differential, any leaf of the foliation that has a zero as one of its endpoints has a pole as the other endpoint. The complement of these curves in~$S$ is a union of cells, examples of which are illustrated on the left below.
\[
\xy /l2pc/:
(2,-2)*{\times}="11";
(0,-2)*{\bullet}="21";
(-2,-2)*{\times}="31";
(3,0)*{\bullet}="12";
(1,0)*{\times}="22";
(-1,0)*{\times}="32";
(-3,0)*{\bullet}="42";
(2,2)*{\times}="13";
(0,2)*{\bullet}="23";
(-2,2)*{\times}="33";
{"11"\PATH~={**@{-}}'"21"},
{"21"\PATH~={**@{-}}'"31"},
{"12"\PATH~={**@{-}}'"22"},
{"32"\PATH~={**@{-}}'"42"},
{"13"\PATH~={**@{-}}'"23"},
{"23"\PATH~={**@{-}}'"33"},
{"11"\PATH~={**@{-}}'"12"},
{"21"\PATH~={**@{-}}'"22"},
{"21"\PATH~={**@{-}}'"32"},
{"31"\PATH~={**@{-}}'"42"},
{"12"\PATH~={**@{-}}'"13"},
{"22"\PATH~={**@{-}}'"23"},
{"32"\PATH~={**@{-}}'"23"},
{"42"\PATH~={**@{-}}'"33"},
\endxy
\qquad
\xy /l2pc/:
(2,-2)*{\times}="11";
(0,-2)*{\bullet}="21";
(-2,-2)*{\times}="31";
(3,0)*{\bullet}="12";
(1,0)*{\times}="22";
(-1,0)*{\times}="32";
(-3,0)*{\bullet}="42";
(2,2)*{\times}="13";
(0,2)*{\bullet}="23";
(-2,2)*{\times}="33";
{"12"\PATH~={**@{-}}'"21"},
{"21"\PATH~={**@{-}}'"42"},
{"12"\PATH~={**@{-}}'"23"},
{"23"\PATH~={**@{-}}'"42"},
{"21"\PATH~={**@{-}}'"23"},
\endxy
\]
Here we indicate the zeros of the quadratic differential by~$\times$ and the poles by~$\bullet$. Choosing a single leaf of the foliation within each cell as illustrated in the picture on the right, we obtain a triangulation of the associated marked bordered surface, well defined up to isotopy. This triangulation is called the \emph{WKB triangulation} of~$\phi$. We employ an extension of this concept called the \emph{signed WKB triangulation}.

Given the quadratic differential~$\phi$, we can consider the branched double cover $\pi:\Sigma_\phi\rightarrow S$ on which the 1-form $\sqrt{Q_0(z)}dz$ is single-valued. It is known as the \emph{spectral cover} for~$\phi$. Let $\Crit(\phi)$ denote the set of critical points of~$\phi$. Then the \emph{Voros symbol} associated to a class $\gamma\in H_1(\Sigma_\phi\setminus\pi^{-1}\Crit(\phi))$ is a formal series in~$\hbar$. It is defined as $e^{V_\gamma}$ where $V_\gamma$ is the period of a certain meromorphic 1-form integrated around the cycle~$\gamma$. Voros symbols play an important role in WKB analysis~\cite{KawaiTakei05}, where they appear in the explicit calculation of the monodromy group of equation~\eqref{eqn:schrodingerintro}. If $\phi$ is a saddle-free differential, then there is a canonical cycle~$\gamma_j$ associated to each arc $j$ of the WKB triangulation, and hence we have the associated Voros symbol~$e^{V_{\gamma_j}}$.

In~\cite{IwakiNakanishi}, Iwaki and Nakanishi studied how the Voros symbols change as the quadratic differential~$\phi$ varies. Their results can be formulated in terms of the one-parameter family of quadratic differentials $\phi^{(\theta)}$, $\theta\in[-r,r]$, described in Section~3.6 of~\cite{IwakiNakanishi}. The differential $\phi^{(\theta)}$ is saddle-free for $\theta\neq0$, but the associated foliation of~$S$ develops a saddle trajectory when $\theta=0$. The diagrams below illustrate the trajectory structure of the foliations associated to these differentials.
\[
\xy /l2pc/:
(2,-2)*{\bullet}="A";
(-2,-2)*{\bullet}="B";
(1,0)*{\times}="X";
(-1,0)*{\times}="Y";
(2,2)*{\bullet}="C";
(-2,2)*{\bullet}="D";
{"A"\PATH~={**@{-}}'"X"},
{"B"\PATH~={**@{-}}'"Y"},
{"C"\PATH~={**@{-}}'"X"},
{"D"\PATH~={**@{-}}'"Y"},
"A";"Y" **\crv{(1.5,-1) & (0.5,-0.25)};
"D";"X" **\crv{(-1.5,1) & (-0.5,0.25)};
(0,2.5)*{-r\leq\theta<0};
\endxy
\qquad
\xy /l2pc/:
(2,-2)*{\bullet}="A";
(-2,-2)*{\bullet}="B";
(1,0)*{\times}="X";
(-1,0)*{\times}="Y";
(2,2)*{\bullet}="C";
(-2,2)*{\bullet}="D";
{"X"\PATH~={**@{-}}'"Y"},
{"A"\PATH~={**@{-}}'"X"},
{"B"\PATH~={**@{-}}'"Y"},
{"C"\PATH~={**@{-}}'"X"},
{"D"\PATH~={**@{-}}'"Y"},
(0,2.5)*{\theta=0};
\endxy
\qquad
\xy /l2pc/:
(2,-2)*{\bullet}="A";
(-2,-2)*{\bullet}="B";
(1,0)*{\times}="X";
(-1,0)*{\times}="Y";
(2,2)*{\bullet}="C";
(-2,2)*{\bullet}="D";
{"A"\PATH~={**@{-}}'"X"},
{"B"\PATH~={**@{-}}'"Y"},
{"C"\PATH~={**@{-}}'"X"},
{"D"\PATH~={**@{-}}'"Y"},
"C";"Y" **\crv{(1.5,1) & (0.5,0.25)};
"B";"X" **\crv{(-1.5,-1) & (-0.5,-0.25)};
(0,2.5)*{0<\theta\leq r};
\endxy
\]
Note that the transformation of $\phi^{(-r)}$ to $\phi^{(r)}$ corresponds to a \emph{flip} of an arc of the WKB triangulation. This is the operation that removes an edge~$k$ and replaces it by the unique different edge that forms a new ideal triangulation:
\[
\xy /l1.5pc/:
(2,-2)*{\bullet}="A";
(-2,-2)*{\bullet}="B";
(2,2)*{\bullet}="C";
(-2,2)*{\bullet}="D";
{"A"\PATH~={**@{-}}'"B"},
{"A"\PATH~={**@{-}}'"C"},
{"C"\PATH~={**@{-}}'"D"},
{"B"\PATH~={**@{-}}'"D"},
{"A"\PATH~={**@{-}}'"D"},
\endxy
\qquad
\longrightarrow
\qquad
\xy /l1.5pc/:
(2,-2)*{\bullet}="A";
(-2,-2)*{\bullet}="B";
(2,2)*{\bullet}="C";
(-2,2)*{\bullet}="D";
{"A"\PATH~={**@{-}}'"B"},
{"A"\PATH~={**@{-}}'"C"},
{"C"\PATH~={**@{-}}'"D"},
{"B"\PATH~={**@{-}}'"D"},
{"B"\PATH~={**@{-}}'"C"},
\endxy
\]
The spectral cover $\Sigma_{\phi^{(\theta)}}$ defined by a differential $\phi^{(\theta)}$ in this one-parameter family is independent of~$\theta$, and therefore so is the homology $H_1(\Sigma_{\phi^{(\theta)}}\setminus\pi^{-1}\Crit(\phi^{(\theta)}))$. Let us write $e^{V_\gamma^{(\theta)}}$ for the Voros symbol associated to the quadratic differential $\phi^{(\theta)}$ and class~$\gamma$. Building on results of~\cite{DDP93} and~\cite{KoikeSchafke}, Iwaki and Nakanishi proved the following.

\begin{theorem}[\cite{IwakiNakanishi}, Proposition~7.3]
\label{thm:transformationVoros}
Let $\phi^{(\theta)}$, $\theta\in[-r,r]$, be the family of quadratic differentials from~\cite{IwakiNakanishi}, Section~3.6. Then for every cycle $\gamma$, there is an equality of analytic functions 
\[
\lim_{\delta\rightarrow0^+}\mathcal{S}\left[e^{V_\gamma^{(-\delta)}}\right] = \lim_{\delta\rightarrow0^+}\mathcal{S}\left[e^{V_\gamma^{(\delta)}} \left(1+e^{V_{\gamma_k}^{(\delta)}}\right)^{-(\gamma_k,\gamma)}\right]
\]
where $(\cdot,\cdot)$ is the intersection pairing on homology and we write $\mathcal{S}[f]$ for the Borel sum of a formal series $f$.
\end{theorem}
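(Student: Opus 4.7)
The plan is to reduce the identity to the Delabaere--Dillinger--Pham (DDP) connection formula describing the Stokes phenomenon of the Voros symbols across a saddle trajectory. Since $\phi^{(\theta)}$ is saddle-free for $\theta\neq 0$ and develops a saddle trajectory representing $\gamma_k$ at $\theta=0$, the family crosses a wall in the space of quadratic differentials; the two sides of the claimed identity should correspond to Borel sums computed in the two neighbouring chambers and should be related by an explicit Stokes automorphism associated to $\gamma_k$.

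First I would invoke the resurgence results of~\cite{KoikeSchafke} to establish that for $\theta\neq 0$ the formal series $V_\gamma^{(\theta)}$ in $\hbar$ is Borel summable along the positive real axis, so that $\mathcal{S}[e^{V_\gamma^{(\theta)}}]$ defines an honest analytic function of $\hbar$ in a sector containing $\mathbb{R}_{>0}$. The next step is to identify the singularity structure of the Borel transform $\widehat{V}_\gamma^{(\theta)}(\xi)$: its singularities lie at the central charges $Z_\alpha^{(\theta)}=\int_\alpha\sqrt{Q_0^{(\theta)}(z)}\,dz$ of cycles $\alpha\in H_1(\Sigma_{\phi^{(\theta)}}\setminus\pi^{-1}\Crit(\phi^{(\theta)}))$. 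As $\theta$ passes through $0$, the singularity at $Z_{\gamma_k}^{(\theta)}$ sweeps across $\mathbb{R}_{>0}$, and the Laplace contour defining $\mathcal{S}$ must be deformed past it; this deformation is exactly what produces the Stokes jump being measured by the theorem.

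The heart of the argument is the DDP formula of~\cite{DDP93}: the Stokes automorphism $\mathfrak{S}_{\gamma_k}$ associated to the singularity at $Z_{\gamma_k}$ acts on Voros symbols by
\[
\mathfrak{S}_{\gamma_k}\bigl(e^{V_\gamma}\bigr)=e^{V_\gamma}\bigl(1+e^{V_{\gamma_k}}\bigr)^{(\gamma_k,\gamma)}.
\]
Writing $\mathcal{S}_\pm$ for the lateral Borel sums obtained by deforming the contour slightly above or below $\mathbb{R}_{>0}$, one identifies $\lim_{\delta\to0^+}\mathcal{S}[e^{V_\gamma^{(-\delta)}}]$ with $\mathcal{S}_-[e^{V_\gamma^{(0)}}]$ and $\lim_{\delta\to0^+}\mathcal{S}[e^{V_\gamma^{(\delta)}}]$ with $\mathcal{S}_+[e^{V_\gamma^{(0)}}]$ (or vice versa, depending on the orientation convention for $\gamma_k$). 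Since $\mathcal{S}_-=\mathcal{S}_+\circ\mathfrak{S}_{\gamma_k}^{-1}$ on resurgent germs with a single singularity on $\mathbb{R}_{>0}$, applying the inverse of the DDP formula yields exactly the claimed identity, the negative sign $-(\gamma_k,\gamma)$ being the one produced by inverting $\mathfrak{S}_{\gamma_k}$.

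The main obstacle is the DDP formula itself, whose derivation requires a delicate alien-calculus analysis of the singularities of $\widehat{V}_\gamma$ and a proof that the resurgent structure of the Voros symbols is governed by the intersection form on $H_1$ of the spectral cover. Ensuring that this structure varies continuously in the family $\phi^{(\theta)}$ near $\theta=0$, and pinning down orientations so that $(\gamma_k,\gamma)$ appears with the correct sign, is where the bulk of the work lies; once the Koike--Schäfke summability and the DDP formula are in hand, the remainder of the argument is essentially algebraic bookkeeping.
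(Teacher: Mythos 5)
The paper does not actually prove this statement: it is quoted verbatim from Iwaki--Nakanishi (their Proposition~7.3), with only the remark that their proof ``builds on results of~\cite{DDP93} and~\cite{KoikeSchafke}.'' There is therefore no in-paper proof against which to compare your argument. That said, your sketch reproduces exactly the route the paper attributes to the cited reference: Borel summability of $S_{\text{odd}}$ (hence of $V_\gamma^{(\theta)}$) via the unpublished Koike--Sch\"afke result; the singularity of the Borel transform at $Z_{\gamma_k}^{(\theta)}$ crossing the positive real axis as $\theta$ passes through $0$; and the Delabaere--Dillinger--Pham Stokes automorphism
\[
\mathfrak{S}_{\gamma_k}\bigl(e^{V_\gamma}\bigr)=e^{V_\gamma}\bigl(1+e^{V_{\gamma_k}}\bigr)^{(\gamma_k,\gamma)}
\]
mediating between the two lateral Borel sums. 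This is consistent with what the paper states, and your caveat about pinning down orientations and the sign of $(\gamma_k,\gamma)$ is indeed the main point of care; the paper itself offers no guidance there, so that bookkeeping would have to be checked directly against Iwaki--Nakanishi's conventions.
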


Interestingly, the rational expression appearing on the right hand side of the equation in this theorem also appears in the theory of cluster algebras. In~\cite{IwakiNakanishi}, Iwaki and Nakanishi sharpened this observation by showing that the Voros symbols associated to cycles correspond to elements in the cluster algebra associated to a marked bordered surface by~\cite{FST}. Still, the relationship between Voros symbols and cluster algebras begs for some deeper explanation in terms of previously known cluster structures in mathematics.

\subsection{Cluster coordinates on moduli spaces of local systems}

In their seminal work in higher Teichm\"uller theory~\cite{FG1}, Fock and Goncharov defined versions of the moduli space of local systems on a surface and showed that their spaces admit atlases of rational cluster coordinates, making them birational to cluster varieties.

To be more precise, suppose that $(\mathbb{S},\mathbb{M})$ is a marked bordered surface, and let $\mathbb{S}'=\mathbb{S}\setminus\mathbb{M}$. Given a $PGL_2(\mathbb{C})$-local system $\mathcal{L}$ on~$\mathbb{S}'$, we can form the associated bundle 
\[
\mathcal{L}_{\mathbb{CP}^1}\coloneqq\mathcal{L}\times_{PGL_2(\mathbb{C})}\mathbb{CP}^1.
\]
Following~\cite{FG1}, we define a \emph{framed $PGL_2(\mathbb{C})$-local system} on~$(\mathbb{S},\mathbb{M})$ to consist of a $PGL_2(\mathbb{C})$-local system on~$\mathbb{S}'$ and, for each marked point $p\in\mathbb{M}$, a flat section of the restriction of $\mathcal{L}_{\mathbb{CP}^1}$ to a small neighborhood of~$p$. We denote by $\mathcal{X}(\mathbb{S},\mathbb{M})$ the moduli stack parametrizing all framed $PGL_2(\mathbb{C})$-local systems on~$(\mathbb{S},\mathbb{M})$. To define coordinates on this space, suppose we are given an ideal triangulation $T$ of~$(\mathbb{S},\mathbb{M})$. Let $j$ be an arc of this triangulation separating two ideal triangles as illustrated below. (As we will explain later, this construction can be generalized to the case where $j$ is an arc in a ``self-folded triangle''.)
\[
\xy /l2pc/:
(0,-2)*{\bullet}="21";
(3,0)*{\bullet}="12";
(-3,0)*{\bullet}="42";
(0,2)*{\bullet}="23";
{"12"\PATH~={**@{-}}'"21"},
{"21"\PATH~={**@{-}}'"42"},
{"12"\PATH~={**@{-}}'"23"},
{"23"\PATH~={**@{-}}'"42"},
{"21"\PATH~={**@{-}}'"23"},
(0.25,0)*{j};
(0,-2.35)*{p_1};
(3.45,0)*{p_2};
(0,2.35)*{p_3};
(-3.45,0)*{p_4};
\endxy
\]
The choice of a point in $\mathcal{X}(\mathbb{S},\mathbb{M})$ provides a flat section of $\mathcal{L}_{\mathbb{CP}^1}$ near each marked point~$p_i$ in the picture above. If we parallel transport these sections to a common point in the interior of the quadrilateral, we get points $z_i\in\mathbb{CP}^1$ for $i=1,\dots,4$ which we view as lines in~$\mathbb{C}^2$. Fixing a volume form $\omega$ on~$\mathbb{C}^2$ and choosing a vector $v_i$ in each line $z_i$, we can form the cross ratio 
\[
X_j=\frac{\omega(v_1\wedge v_2)\omega(v_3\wedge v_4)}{\omega(v_2\wedge v_3)\omega(v_1\wedge v_4)}.
\]
The cross ratios $X_j$ for $j$ an arc of~$T$ are called the \emph{Fock-Goncharov coordinates} with respect to the triangulation. They provide a birational map 
\[
\mathcal{X}(\mathbb{S},\mathbb{M})\dashrightarrow(\mathbb{C}^*)^n
\]
where $n$ is the number of arcs in an ideal triangulation. One can show that this map depends only on the choice of ideal triangulation and not on any other choices in the construction.

Suppose $T$ and $T'$ are two ideal triangulations of~$(\mathbb{S},\mathbb{M})$ so that $T'$ is obtained from~$T$ by a flip of an edge~$k$. Then the coordinates associated to the triangulation $T'$ are obtained from the ones associated to~$T$ by a type of rational transformation known as a~\emph{cluster transformation}. This transformation is the composition of a simple monomial transformation with the rational map given on coordinates by 
\[
X_j\mapsto X_j(1+X_k)^{-\varepsilon_{jk}}
\]
where $\varepsilon_{st}$ is a skew-symmetric matrix indexed by edges of the ideal triangulation~$T$.

The rational expression appearing in the above map is exactly the one that appears on the right hand side of the formula in Theorem~\ref{thm:transformationVoros}. Thus it is natural to wonder if the Voros symbols can be realized as Fock-Goncharov coordinates of framed local systems. The purpose of this paper is to give such a realization. To do this, we employ the map recently introduced in~\cite{AllegrettiBridgeland1}, which takes meromorphic projective structures to their monodromy data.

\subsection{The monodromy map and main theorem}

Recall that a \emph{projective structure} on a Riemann surface $S$ is defined as an atlas of holomorphic charts mapping open sets in~$S$ into~$\mathbb{CP}^1$ such that the transition functions are restrictions of M\"obius transformations. The space of projective structures on a Riemann surface $S$ has the structure of an affine space modeled on the vector space $H^0(S,\omega_S^{\otimes2})$ of holomorphic quadratic differentials on~$S$. Indeed, if $\mathcal{P}_0$ is a projective structure with a local chart $z:U\rightarrow\mathbb{CP}^1$ and $\phi$ is a holomorphic quadratic differential, given in the local chart by the expression 
\[
\phi(z)=\varphi(z)dz^{\otimes2},
\]
then we get a chart $w:U\rightarrow\mathbb{CP}^1$ for a new projective structure $\mathcal{P}_0+\phi$ by taking linearly independent solutions $y_1(z)$ and $y_2(z)$ of the equation 
\begin{align}
\label{eqn:schrodingerwithoutparameterintro}
y''(z)-\varphi(z)y(z)=0
\end{align}
and setting $w\coloneqq y_1(z)/y_2(z)$.

In~\cite{AllegrettiBridgeland1}, we considered a moduli space parametrizing projective structures with poles of prescribed orders. To define such a projective structure, let us fix an ordinary projective structure $\mathcal{P}_0$ on~$S$. Then a \emph{meromorphic projective structure} is defined as a collection of charts $w:U\rightarrow\mathbb{CP}^1$ given by ratios of solutions of the equation~\eqref{eqn:schrodingerwithoutparameterintro} as above, where now the quadratic differential $\phi$ is allowed to have poles. The resulting object is again written $\mathcal{P}_0+\phi$. By a \emph{pole} of a meromorphic projective structure, we mean a pole of the differential~$\phi$. One can easily show that this notion is independent of the choice of~$\mathcal{P}_0$.

A meromorphic projective structure $\mathcal{P}$ on a Riemann surface~$S$ naturally determines a marked bordered surface. Writing $\mathcal{P}=\mathcal{P}_0+\phi$ where $\mathcal{P}_0$ is an ordinary projective structure and $\phi$ is a meromorphic quadratic differential, this is simply defined as the marked bordered surface associated to~$\phi$. One can show that this definition is independent of the choice of~$\mathcal{P}_0$. If $S$ is a Riemann surface and $\mathcal{P}$ is a meromorphic projective structure on~$S$, then a \emph{marking} of $(S,\mathcal{P})$ by~$(\mathbb{S},\mathbb{M})$ is an isomorphism of $(\mathbb{S},\mathbb{M})$ with the marked bordered surface induced by $\mathcal{P}$, considered up to diffeomorphisms isotopic to the identity. There is a moduli space $\Proj(\mathbb{S},\mathbb{M})$ parametrizing pairs $(S,\mathcal{P})$ as above together with a marking by~$(\mathbb{S},\mathbb{M})$.

It is convenient to modify this space in two ways, which we will explain in detail later. Firstly, we consider a dense open set 
\[
\Proj^\circ(\mathbb{S},\mathbb{M})\subseteq\Proj(\mathbb{S},\mathbb{M})
\]
which is the complement of the codimension two locus of projective structures with apparent singularities. Secondly, we consider a finite cover 
\[
\Proj^*(\mathbb{S},\mathbb{M})\rightarrow\Proj^\circ(\mathbb{S},\mathbb{M})
\]
of degree $2^N$ where $N$ is the number of marked points in the interior of~$\mathbb{S}$. In terms of the space $\Proj^*(\mathbb{S},\mathbb{M})$, the main result of~\cite{AllegrettiBridgeland1} was the following.

\begin{theorem}[\cite{AllegrettiBridgeland1}, Theorem~1.1]
Let $(\mathbb{S},\mathbb{M})$ be a marked bordered surface, and if $\mathbb{S}$ has genus zero, assume that $|\mathbb{M}|\geq3$. Then there exists a natural holomorphic map 
\[
F:\Proj^*(\mathbb{S},\mathbb{M})\rightarrow\mathcal{X}(\mathbb{S},\mathbb{M})
\]
sending a projective structure to its monodromy representation equipped with a natural framing given by Stokes data for the differential equation~\eqref{eqn:schrodingerwithoutparameterintro}.
\end{theorem}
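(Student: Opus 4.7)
The plan is to construct $F$ in two stages: first produce the monodromy $PGL_2(\mathbb{C})$-local system from the projective structure, and then equip it with the required framing at each marked point. Given $(S,\mathcal{P}) \in \Proj^*(\mathbb{S},\mathbb{M})$, write $\mathcal{P} = \mathcal{P}_0 + \phi$ as in the excerpt, so that locally the projective structure is defined by ratios of solutions of $y''(z) - \varphi(z) y(z) = 0$. On the complement $S^*$ of the poles of $\phi$, the local spaces of solutions glue into a rank-two local system, whose projectivization is a $PGL_2(\mathbb{C})$-local system $\mathcal{L}$. The marking identifies $\mathbb{S}' = \mathbb{S}\setminus\mathbb{M}$ with a deformation retract of $S^*$ (after the real oriented blowup at higher-order poles), so we may view $\mathcal{L}$ as a local system on $\mathbb{S}'$, yielding the unframed point of $\mathcal{X}(\mathbb{S},\mathbb{M})$ underlying the eventual value of $F$.

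The essential step is then to attach a canonical flat section of $\mathcal{L}_{\mathbb{CP}^1}$ near each marked point $p \in \mathbb{M}$, and there are two cases corresponding to the two ways marked points arise from $\phi$. If $p$ is a pole of order $\leq 2$, then the ODE has a regular singular point at $p$ and, for $\mathcal{P}$ in the generic locus $\Proj^\circ(\mathbb{S},\mathbb{M})$, the local monodromy has two distinct eigenlines in the solution space; each such eigenline projectivizes to a flat section of $\mathcal{L}_{\mathbb{CP}^1}$ near $p$. The finite cover $\Proj^*(\mathbb{S},\mathbb{M}) \to \Proj^\circ(\mathbb{S},\mathbb{M})$ of degree $2^N$ serves precisely to record, for each of the $N$ interior marked points, a consistent choice between the two eigenlines. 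If $p$ is a pole of order $\geq 3$, then $p$ lies on a boundary component of $\mathbb{S}$ and the marked points on that component correspond bijectively to the Stokes sectors of the irregular singularity. On each such sector the classical theory of irregular singularities singles out a subdominant solution, unique up to scalar, characterized by exponential decay along the associated direction; its projectivization provides the required flat section.

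Holomorphy of the resulting assignment follows from the holomorphic dependence of ODE solutions on their coefficients, together with the fact that both the eigendirections of regular-singular monodromy and the subdominant rays at irregular singularities vary holomorphically in families, provided one stays away from the apparent-singularity locus removed in passing from $\Proj(\mathbb{S},\mathbb{M})$ to $\Proj^\circ(\mathbb{S},\mathbb{M})$; at an apparent singularity the two eigenlines collide and the framing degenerates. The main obstacle I anticipate lies here: one must prove a holomorphic-family version of the Sibuya-style existence theorem identifying the subdominant solution in each Stokes sector of an arbitrary irregular pole, and then check that these locally defined flat sections are compatible along the boundary arcs between adjacent sectors, so that the framing truly globalizes to a section of $\mathcal{L}_{\mathbb{CP}^1}$ on a neighborhood of each marked point. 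The genus-zero hypothesis $|\mathbb{M}| \geq 3$ is needed to exclude degenerate marked bordered surfaces for which $\mathcal{X}(\mathbb{S},\mathbb{M})$ fails to have the structure required for the framing construction to make sense.
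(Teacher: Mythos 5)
Your proposal matches the construction sketched in Section~6 of the paper (itself a summary of the cited reference): the monodromy of the projective structure gives the underlying $PGL_2(\mathbb{C})$-local system, the degree-$2^N$ cover $\Proj^*\rightarrow\Proj^\circ$ records a choice of eigenline at each pole of order $\leq 2$, and the subdominant solutions of Proposition~\ref{prop:subdominantexists} give the framing at poles of order $\geq 3$. You also correctly identify the main technical burden, namely establishing holomorphic dependence of the subdominant line in families, which is the substance of the argument carried out in the cited source.
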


For any $\varepsilon>0$, we write 
\[
\mathbb{H}(\varepsilon)=\{\hbar\in\mathbb{C}:|\hbar|<\varepsilon \text{ and } \Re(\hbar)>0\}.
\]
Given a meromorphic quadratic differential $\phi$, we define in Section~\ref{sec:VorosSymbolsAsClusterCoordinates} an associated meromorphic projective structure~$\mathcal{P}$, and we consider the sum 
\[
\mathcal{P}(\hbar)=\mathcal{P}+\frac{1}{\hbar^2}\phi
\]
for any $\hbar\in\mathbb{H}(\varepsilon)$. The charts of $\mathcal{P}(\hbar)$ are given by ratios of solutions of Schr\"odinger's equation. If we equip $\phi$ with the additional data of a signing and a marking as defined in Section~\ref{sec:VorosSymbolsAsClusterCoordinates}, then we can think of this as a point $\mathcal{P}(\hbar)\in\Proj^*(\mathbb{S},\mathbb{M})$. The assignment $\hbar\mapsto\mathcal{P}(\hbar)$ extends to a multivalued function on~$\mathbb{C}^*$ branched only at the origin. 

\begin{theorem}
\label{thm:intromain}
Let $\phi$ be a complete saddle-free GMN differential equipped with a signing and a marking by $(\mathbb{S},\mathbb{M})$. Then there exists $\varepsilon>0$ such that 
\begin{enumerate}
\item For all points $\hbar\in\mathbb{H}(\varepsilon)$, the Fock-Goncharov coordinates of $F(\mathcal{P}(\hbar))$ with respect to the signed WKB triangulation of~$\phi$ are well defined.
\item Taking the Fock-Goncharov coordinate associated to an arc of the signed WKB triangulation gives a holomorphic map $\mathbb{H}(\varepsilon)\rightarrow\mathbb{C}^*$ which agrees with the Borel sum of the corresponding Voros symbol.
\end{enumerate}
\end{theorem}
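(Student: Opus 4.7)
The plan is, for each small $\hbar\in\mathbb{H}(\varepsilon)$, to construct explicit analytic solutions of the Schr\"odinger equation associated to $\mathcal{P}(\hbar)$ and to identify the Fock-Goncharov coordinate of $F(\mathcal{P}(\hbar))$ at an arc $j$ with the Borel sum of the Voros symbol $e^{V_{\gamma_j}}$ by a direct wronskian calculation. First, I would invoke the classical theory of exact WKB analysis (Koike-Sch\"afke and Takei et al.) to obtain $\varepsilon>0$ such that the formal WKB solutions
\[
y_\pm(z,\hbar)=\exp\!\left(\pm\tfrac{1}{\hbar}\int^z\!\sqrt{Q_0}\,dz\right)f_\pm(z,\hbar)
\]
are Borel-summable for $\hbar\in\mathbb{H}(\varepsilon)$ on each Stokes region of the saddle-free differential $\phi$, with Borel sums giving honest analytic solutions of the Schr\"odinger equation attached to $\mathcal{P}(\hbar)$. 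The signing on $\phi$ fixes a consistent choice of $\sqrt{Q_0}$ at each marked point, so at every $p\in\mathbb{M}$ one obtains a well-defined Borel-summed subdominant WKB solution $y_p(\cdot,\hbar)$.

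Second, I would identify these $y_p$ with the flat sections that define the framing of $F(\mathcal{P}(\hbar))$. By the construction of $F$ in \cite{AllegrettiBridgeland1}, the flat section of $\mathcal{L}_{\mathbb{CP}^1}$ at a marked point $p$ is the line spanned by the Stokes-subdominant solution determined by the Stokes sector at $p$; at irregular poles this solution is uniquely characterized by its decay and so coincides with the Borel-summed $y_p$, while at regular poles in the interior of $\mathbb{S}$ the $2^N$-fold cover $\Proj^*(\mathbb{S},\mathbb{M})$ supplies precisely the additional sign choice that the signing fixes on the WKB side.

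Third, for each arc $j$ separating marked points $p_1,\dots,p_4$ I would compute the cross-ratio
\[
X_j=\frac{W(y_{p_1},y_{p_2})\,W(y_{p_3},y_{p_4})}{W(y_{p_2},y_{p_3})\,W(y_{p_1},y_{p_4})},
\]
where $W$ denotes the (constant) wronskian of two solutions of the Schr\"odinger equation; for any second-order linear ODE this equals $\omega(v_i\wedge v_{i+1})$ up to a common normalization that cancels in the cross-ratio. For each adjacent pair $(p_i,p_{i+1})$, the solutions $y_{p_i}$ and $y_{p_{i+1}}$ have opposite WKB dominance on the edge between them, so their wronskian is nonzero for $\hbar\in\mathbb{H}(\varepsilon)$, establishing part~(1) of the theorem. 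Each wronskian has a WKB expansion of the form $\exp\!\bigl(\int_{\gamma_{i,i+1}}\!\lambda\bigr)$, where $\lambda$ is the WKB $1$-form on $\Sigma_\phi$ and $\gamma_{i,i+1}$ is a path on $\Sigma_\phi$ joining the relevant zeros of $\phi$. When the four wronskians are assembled into $X_j$, the contributions localized at the poles cancel and what remains is a closed cycle on $\Sigma_\phi$, which one identifies with $\gamma_j$. Because Borel summation is compatible with products and quotients whenever defined, this yields $X_j=\mathcal{S}[e^{V_{\gamma_j}}]$, and part~(2) follows, with holomorphy in $\hbar$ inherited from the holomorphic dependence of the Borel-summed solutions.

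The main obstacle is the contour bookkeeping in the third step: one must verify that after assembling the four wronskian factors all boundary contributions at the poles cancel and that the surviving closed cycle on $\Sigma_\phi$ is canonically the cycle $\gamma_j$ attached to $j$. This calculation has appeared informally in the exact WKB literature but becomes rigorous here because the Borel sums provide genuine analytic values rather than merely formal series. A secondary technical point is the case where $j$ lies inside a self-folded triangle, for which the cross-ratio formula must be replaced by its tagged analogue adapted to the signed WKB triangulation; this is handled by the same wronskian calculation carried out under the branched double cover encoded by the signing.
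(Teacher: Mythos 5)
Your proposal follows essentially the same path as the paper's proof: establish Borel summability of the WKB solutions on each horizontal strip via Koike--Sch\"afke, identify the Borel sums with the framing sections of $F(\mathcal{P}(\hbar))$ (by subdominance at poles of order $\geq3$ and eigenline-matching at double poles, with the signing/$2^N$-cover providing the sign on both sides), and compute the Fock--Goncharov cross-ratio as a combination of Wronskians that assembles into $\mathcal{S}[e^{V_{\gamma_j}}]$.

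The ``contour bookkeeping'' you flag as the main obstacle is not a small gap you can leave informal: it is exactly the content of the paper's Propositions \ref{prop:horizontalstrip}--\ref{prop:changezero} and \ref{prop:regularcrossratio}. Those results establish precise connection formulas for the Borel sums normalized at poles (across a horizontal strip, across a separating trajectory, and under a change of base zero), and the cross-ratio computation then proceeds through the explicit $2\times2$ determinants, not merely through an appeal to ``WKB expansions'' of the Wronskians. Two concrete points your sketch glosses over and the paper handles carefully: (i) the computation is performed after pulling back to the universal cover of $S\setminus\Pol(\phi)$, which is what makes the relevant horizontal strip regular and lets the four flanking poles be treated symmetrically; and (ii) the normalization of WKB solutions at a pole (rather than at a zero) is what makes the Borel sums directly comparable across Stokes curves via the factors $e_\pm$ in Proposition~\ref{prop:connectionformulaseparating}, and what makes the matrices in the connection formulas land in $SL_2(\mathbb{C})$ so that the determinants factor cleanly. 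Your treatment of the self-folded case via the tagged analogue is also correct in spirit, but the paper actually decomposes the lifted cycle $\tilde\gamma_j$ into two pieces identified with $\gamma_{\tilde j}$ and $\gamma_{\tilde k}$ and shows $X_j=Y_jY_k=\mathcal{S}[e^{V_{\gamma_j}}]$ from Proposition~\ref{prop:regularcrossratio}; this requires working with the universal-cover picture, not just a formal ``branched double cover encoded by the signing.''
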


The first part of this theorem was proved in~\cite{Allegretti17} in the special case where $(\mathbb{S},\mathbb{M})$ is a disk with finitely many marked points on its boundary. It was used there to relate the wall-and-chamber decomposition of certain spaces of Bridgeland stability conditions to the algebraic tori glued together in the construction of cluster varieties.

\subsection{Meromorphic continuation and asymptotic property}

The Borel sum of any Voros symbol is holomorphic in a domain of the form $\mathbb{H}(\varepsilon)$. On the other hand, one can consider the Fock-Goncharov coordinates of $F(\mathcal{P}(\hbar))$ even when $\hbar$ lies outside of this domain. This leads to the following statement.

\begin{theorem}
\label{thm:meromorphic}
Suppose that $\phi$ is a quadratic differential as in the statement of Theorem~\ref{thm:intromain}. Then for each arc $j$ of the signed WKB triangulation of~$\phi$, the function 
\[
\mathcal{Y}_j:\hbar\mapsto X_j(F(\mathcal{P}(\hbar)))
\]
is a multivalued meromorphic continuation of $\mathcal{S}[e^{V_j}]$ from $\mathbb{H}(\varepsilon)$ to~$\mathbb{C}^*$, branched only at the origin.
\end{theorem}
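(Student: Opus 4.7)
The plan is to assemble $\mathcal{Y}_j$ out of three already-established holomorphic or meromorphic ingredients. First, the excerpt records that $\hbar \mapsto \mathcal{P}(\hbar) = \mathcal{P} + \hbar^{-2}\phi$ extends to a multivalued holomorphic function from $\mathbb{C}^*$ to $\Proj^*(\mathbb{S},\mathbb{M})$ branched only at the origin: the naive formula $\mathcal{P} + \hbar^{-2}\phi$ is single-valued holomorphic into $\Proj(\mathbb{S},\mathbb{M})$, so any branching is forced upon us by the $2^N$-sheeted cover $\Proj^*(\mathbb{S},\mathbb{M}) \to \Proj^\circ(\mathbb{S},\mathbb{M})$. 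Second, the theorem of \cite{AllegrettiBridgeland1} quoted above says that the monodromy map $F : \Proj^*(\mathbb{S},\mathbb{M}) \to \mathcal{X}(\mathbb{S},\mathbb{M})$ is holomorphic. Third, the Fock-Goncharov coordinate $X_j$, being a pullback of a cross ratio on $\mathbb{CP}^1$, is a rational (and so meromorphic) function on the stack $\mathcal{X}(\mathbb{S},\mathbb{M})$.

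Composing these, I would take $\mathcal{Y}_j(\hbar) = X_j(F(\mathcal{P}(\hbar)))$ as a multivalued meromorphic function on $\mathbb{C}^*$ whose only branch point is the origin. To identify this composition with $\mathcal{S}[e^{V_j}]$ I invoke Theorem~\ref{thm:intromain}: on the sector $\mathbb{H}(\varepsilon)$, with the canonical branch of $\mathcal{P}(\hbar)$ specified by the signing and marking of $\phi$, the quantity $X_j(F(\mathcal{P}(\hbar)))$ is equal to the Borel sum of the Voros symbol along the cycle $\gamma_j$ associated to~$j$. Two meromorphic functions that coincide on a nonempty open subset of a connected domain coincide throughout, so $\mathcal{Y}_j$ is a bona fide meromorphic continuation of $\mathcal{S}[e^{V_j}]$ from $\mathbb{H}(\varepsilon)$, and the argument above controls its branching and global nature on $\mathbb{C}^*$.

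The only step that demands genuine care, and in my view the main obstacle, is the behavior of the composition at those values $\hbar_0 \in \mathbb{C}^*$ where the holomorphic curve $\hbar \mapsto \mathcal{P}(\hbar)$ meets the apparent singularity locus inside $\Proj(\mathbb{S},\mathbb{M})$, or where the cross ratio defining $X_j$ degenerates. I would argue that both loci are proper analytic subsets of $\Proj(\mathbb{S},\mathbb{M})$ and $\mathcal{X}(\mathbb{S},\mathbb{M})$ respectively, so that their joint preimage under the curve $\hbar \mapsto \mathcal{P}(\hbar)$ is a discrete subset $D \subset \mathbb{C}^*$. On $\mathbb{C}^* \setminus D$ the lift to $\Proj^*(\mathbb{S},\mathbb{M})$ and subsequent compositions are holomorphic by the three facts recalled above, and near each $\hbar_0 \in D$ the function $\mathcal{Y}_j$ is locally meromorphic by Riemann removable singularity applied on the cover. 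What must be checked, and I would relegate to a short lemma, is that the local monodromy of the lift around each point of $D$ is trivial, so that no spurious branching appears outside of $0$; this should follow from a local analysis of the $2^N$-fold cover $\Proj^* \to \Proj^\circ$ near the apparent singularity locus, where the sheets are parametrized by sign choices at interior marked points and are therefore locally interchanged only by monodromy around $\hbar = 0$.
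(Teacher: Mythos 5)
Your outline (holomorphic $\mathcal{P}(\hbar)$, holomorphic $F$, rational $X_j$, uniqueness of continuation) is roughly the right skeleton, but three of your steps either contradict the setup or skip the actual substance. First, your account of the multivaluedness is incorrect: $\mathcal{P}+\hbar^{-2}\phi$ is already multivalued as a map into $\Proj(\mathbb{S},\mathbb{M})$, not merely after lifting to $\Proj^*(\mathbb{S},\mathbb{M})$. A point of $\Proj(\mathbb{S},\mathbb{M})$ carries a marking by $(\mathbb{S},\mathbb{M})$, and the marking is transported along a local system of sets over $\mathbb{C}^*$ which has nontrivial monodromy around $0$ (the distinguished boundary tangent directions of $\hbar^{-2}\phi$ at higher-order poles rotate with $\hbar$); the paper states this explicitly when it introduces $\mathcal{P}(\hbar)$. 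Second, your treatment of the bad locus $\mathcal{V}$ cannot work as sketched: for $\hbar_0\in\mathcal{V}$ the point $\mathcal{P}(\hbar_0)$ lies outside $\Proj^\circ(\mathbb{S},\mathbb{M})$, so the cover $\Proj^*\to\Proj^\circ$ has no fiber over it and there is no lift whose isolated singularity you can remove by Riemann's theorem. The paper instead works directly on the level of the framed local system: using Lemma~\ref{lem:eigenvalues} it shows the distinguished eigenvalue $\lambda(\hbar)$ is holomorphic through $\hbar_0$, that the diagonal entries of $M(\hbar)-\lambda(\hbar)$ are not both identically zero, and hence that the eigenline map $\ell_p(\hbar)$ extends holomorphically across $\hbar_0$. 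This produces a genuine framed local system $\mathcal{F}(\hbar)$ through the bad point whose cross ratios are ratios of holomorphic functions; it also settles the question of local monodromy around $\hbar_0$ that you leave as an open sub-lemma, and your heuristic that the sheets of the $2^N$-cover are ``only interchanged by monodromy around $\hbar=0$'' does not actually establish this.

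Third, even away from $\mathcal{V}$, ``holomorphic $F$ composed with rational $X_j$ is meromorphic'' is not automatic in this setting. The image of $F$ lies in a possibly non-Hausdorff manifold covered by the birational charts $X_\tau$, and in a given chart $X_j$ is a rational expression $p_i/q_i$ in the chart coordinates which could a priori be of the indeterminate form $0/0$ along the entire holomorphic curve $\hbar\mapsto F(\mathcal{P}(\hbar))$. The paper's Lemma~\ref{lem:extendfunction} rules this out by a chain-of-charts argument: along a path from $\mathbb{H}(\varepsilon)$, where Theorem~\ref{thm:main} guarantees $X_j$ is regular and nonzero, to an arbitrary $\hbar\in\mathbb{C}^*\setminus\mathcal{V}$, one propagates the non-vanishing of the successive denominators $q_i$ from one coordinate chart $\tau_{t_i}$ to the next. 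Your final appeal to ``two meromorphic functions that coincide on a nonempty open subset of a connected domain coincide throughout'' presupposes that $\mathcal{Y}_j$ is already known to be globally meromorphic, which is precisely what this chain argument is there to prove.
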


In~\cite{GMN2}, Gaiotto, Moore, and Neitzke constructed a set of Darboux coordinates on the moduli space of rank two Higgs bundles on a Riemann surface. These coordinates are closely related to the functions $\mathcal{Y}_j$ defined in the above theorem. Indeed, there is a sense in which our construction is the ``conformal limit'' of the one in~\cite{GMN2}. This conformal limit was discussed from a physical perspective in~\cite{Gaiotto} and from a mathematical one in~\cite{DFKMMN}.

Another application of our main theorem concerns the asymptotic behavior of the monodromy map. This is closely related to the asymptotic property proved in Section~7.4 of~\cite{GMN2}. To state the result, we consider for each of the cycles $\gamma_j$ in~$\Sigma_\phi$ the period 
\[
Z_{\gamma_j}=\oint_{\gamma_j}\sqrt{\phi}
\]
where the square root is chosen so that $\Im(Z_{\gamma_j})>0$.

\begin{theorem}
\label{thm:asymptotics}
For each arc $j$ of the signed WKB triangulation of~$\phi$, we have 
\[
\mathcal{Y}_j(\hbar)\cdot\exp(Z_{\gamma_j}/\hbar)\rightarrow1
\]
as $\hbar\rightarrow0$, $\Re(\hbar)>0$.
\end{theorem}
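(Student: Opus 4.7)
The plan is to combine Theorem~\ref{thm:intromain}(2) with a careful analysis of the leading behavior of the formal Voros symbol. Since the function $\mathcal{Y}_j$ is defined on $\mathbb{C}^*$ but the asymptotic regime $\hbar\to 0$, $\Re(\hbar)>0$, lies inside the domain $\mathbb{H}(\varepsilon)$ where Theorem~\ref{thm:intromain}(2) applies, I would immediately reduce to proving
\[
\mathcal{S}\bigl[e^{V_{\gamma_j}}\bigr](\hbar)\cdot\exp(Z_{\gamma_j}/\hbar)\longrightarrow 1
\qquad\text{as } \hbar\to 0,\ \Re(\hbar)>0.
\]

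Next I would extract the singular part of the formal Voros exponent. Recall that $V_{\gamma_j}$ is the period around $\gamma_j$ of the odd part $S_{\mathrm{odd}}(z,\hbar)\,dz$ of the logarithmic derivative of a formal WKB solution. By the parity property built into the definitions of $S_{\mathrm{even}}$ and $S_{\mathrm{odd}}$, the series $S_{\mathrm{odd}}$ contains only odd powers of $\hbar$, with leading term $\hbar^{-1}\sqrt{Q_0(z)}\,dz$. Integrating around $\gamma_j$, with the square root fixed so that $\Im(Z_{\gamma_j})>0$ as in the statement, one gets a decomposition
\[
V_{\gamma_j}\;=\;-\,\frac{Z_{\gamma_j}}{\hbar}\;+\;\hbar\, R_j(\hbar),
\]
where $R_j$ is a formal power series in $\hbar^2$ (no constant term is possible by the parity of $S_{\mathrm{odd}}$). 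The first factor is an exact analytic function, so pulling it out of the Borel sum gives
\[
\mathcal{S}\bigl[e^{V_{\gamma_j}}\bigr](\hbar)\cdot\exp(Z_{\gamma_j}/\hbar)\;=\;\mathcal{S}\bigl[e^{\hbar R_j(\hbar)}\bigr](\hbar).
\]
The task is thus reduced to showing that $\mathcal{S}[e^{\hbar R_j(\hbar)}](\hbar)\to 1$ as $\hbar\to 0$ through the right half-disk. Since $\hbar R_j(\hbar)$ is a Gevrey-1 formal series vanishing at $\hbar=0$ whose Borel transform is holomorphic on a neighborhood of the positive real axis (this is the Borel summability theorem for Voros symbols of complete saddle-free GMN differentials, used implicitly in the very definition of $\mathcal{S}$), a standard contour estimate on its Laplace transform shows that its Borel sum tends to $0$ as $\hbar\to 0^+$; composing with the exponential then yields the limit $1$.

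The main obstacle is really a bookkeeping one rather than an analytic one: I need to verify that the sign conventions for $\gamma_j$ (as the canonical cycle of the signed WKB triangulation lifted to $\Sigma_\phi$) and for the branch of $\sqrt{\phi}$ implicit in the statement $\Im(Z_{\gamma_j})>0$ match the conventions used in Section~\ref{sec:VorosSymbolsAsClusterCoordinates} to define $V_{\gamma_j}$ and in Theorem~\ref{thm:intromain}(2) to identify $X_j(F(\mathcal{P}(\hbar)))$ with $\mathcal{S}[e^{V_{\gamma_j}}]$. Once this is checked, the Borel-summability input needed here is no stronger than what is already invoked in Theorem~\ref{thm:intromain}(2), so no new estimates beyond the decay of the Borel sum of a series vanishing at the origin are required.
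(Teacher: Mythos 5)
Your proposal follows essentially the same route as the paper's proof: restrict $\mathcal{Y}_j$ to $\mathbb{H}(\varepsilon)$ where Theorem~\ref{thm:main} identifies it with $\mathcal{S}[e^{V_{\gamma_j}}]$, split off the $\hbar^{-1}$-period of $\sqrt{Q_0}\,dz$ from $S_{\text{odd}}$, and conclude via the asymptotic-expansion property of Borel sums (the remaining formal series has constant term $1$). One small remark: your stronger claim that the remainder $R_j$ is a series in $\hbar^2$ rests on a parity argument ($S_{\text{odd}}$ containing only odd powers of $\hbar$) that holds here only because the explicit potentials $Q=Q_0$ or $Q=Q_0+\hbar^2 Q_2$ contain no odd $\hbar$-powers; the paper avoids invoking parity and only uses the weaker and sufficient fact that there is no $\hbar^0$ term in the remainder. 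The sign-convention bookkeeping you flag is genuine but is also elided in the paper's ``The result follows,'' so your treatment is on equal footing there.
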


The Darboux coordinates defined in~\cite{GMN2} were designed to give a solution of the Riemann-Hilbert problem formulated in~\cite{GMN1}. A closely related class of Riemann-Hilbert problems was recently introduced by Bridgeland~\cite{Bridgeland16} in the context of Donaldson-Thomas theory. The latter Riemann-Hilbert problems concern functions of $\hbar\in\mathbb{C}^*$ taking values in an algebraic torus. These functions are required to be piecewise continuous, having prescribed jumping behavior along a collection of rays in~$\mathbb{C}^*$, and they are required to satisfy a certain asymptotic property as $\hbar\rightarrow0$. In a forthcoming paper~\cite{Allegretti19}, we will use the results of~\cite{AllegrettiBridgeland1} to give solutions of these Riemann-Hilbert problems in a large class of examples associated to marked bordered surfaces. We will use Theorem~\ref{thm:asymptotics} to prove that our solutions have the required asymptotic property.

\subsection{Path Voros symbols and the principal cluster variety}

So far we have described Voros symbols as formal series associated to cycles in the homology group $H_1(\Sigma_\phi\setminus\widehat{P})$ where $P=\Crit(\phi)$ is the set of critical points of $\phi$ and $\widehat{P}=\pi^{-1}P$. In fact, one can also associate a Voros symbol $W_\beta$ to any path $\beta$ in the relative homology $H_1(\Sigma_\phi\setminus\widehat{P}_0,\widehat{P}_\infty)$ where $P_0=\Zer(\phi)$ is the set of zeros of~$\phi$, $P_\infty=\Pol(\phi)$ is the set of poles, and we set $\widehat{P}_0=\pi^{-1}P_0$ and $\widehat{P}_\infty=\pi^{-1}P_\infty$. These path Voros symbols have attracted attention recently in~\cite{Takei08} and~\cite{AokiTanda13}. In~\cite{IwakiNakanishi}, Iwaki and Nakanishi related the Borel sums of path Voros symbols to cluster variables in the cluster algebra with principal coefficients associated to a marked bordered surface. It would be interesting to realize the path and cycle Voros symbols as cluster coordinates on some moduli space. Such a moduli space would have twice the dimension of~$\mathcal{X}(\mathbb{S},\mathbb{M})$ and would be birational to the principal cluster variety~\cite{GHK}. Closely related spaces associated to the symplectic double cluster variety were described in~\cite{Allegretti14}, \cite{FG16}, and~\cite{Allegretti15}.

\subsection{Organization}

We begin in Section~\ref{sec:BackgroundOnWKBAnalysis} by explaining the fundamentals of exact WKB analysis following the treatment in~\cite{IwakiNakanishi}. We discuss the Schr\"odinger equation on a Riemann surface, and we define the spectral cover, WKB solutions, and Voros symbols. In Section~\ref{sec:TrajectoriesOfQuadraticDifferentials}, we discuss the foliation defined by a meromorphic quadratic differential following~\cite{BridgelandSmith} and~\cite{IwakiNakanishi}. In Section~\ref{sec:BorelResummation}, we review the theory of Borel resummation, again following~\cite{IwakiNakanishi}, and we discuss the Borel summability of the Voros symbols and WKB solutions. In Section~\ref{sec:ProjectiveStructures}, we recall the notion of a meromorphic projective structure from~\cite{AllegrettiBridgeland1}. In Section~\ref{sec:FramedLocalSystems}, we review the construction of Fock-Goncharov coordinates on the space of framed local systems and the monodromy map from~\cite{AllegrettiBridgeland1}. Finally, in Section~\ref{sec:VorosSymbolsAsClusterCoordinates}, we prove a number of connection formulas and use them to prove our main results.

\section{Background on WKB analysis}
\label{sec:BackgroundOnWKBAnalysis}

\subsection{The Schr\"odinger equation}

Let $S$ be a compact Riemann surface, and let us choose a line bundle $L$ such that $L\otimes L\cong \omega_S$ is the canonical bundle of~$S$. Then we have line bundles $\omega_S^{-1/2}$ and $\omega_S^{3/2}$ defined as the inverse and third tensor power, respectively, of $L$. For $i=1,\dots,d$, let $p_i\in S$ be distinct points, and let $m_i$ be positive integers. We form the divisor 
\[
D=\sum_im_ip_i.
\]
For any $\hbar\in\mathbb{C}$, we will consider a morphism 
\[
\mathcal{D}_\hbar:\omega_S^{-1/2}\rightarrow \omega_S^{3/2}(D)
\]
of sheaves of sections of these line bundles. Let $U\subseteq S$ be an open subset over which $L$ is trivial. In a local coordinate $z$ on~$U$, we can view a section of $\omega_S^{-1/2}$ as a holomorphic function~$y(z)$. Then the map $\mathcal{D}_\hbar$ is given by a formula 
\[
(\mathcal{D}_\hbar y)(z)=\hbar^2y''(z)-Q(z,\hbar)y(z).
\]
The function $Q(z,\hbar)$ is called the \emph{potential} and is assumed to have the form 
\[
Q(z,\hbar)=Q_0(z)+\hbar Q_1(z)+\hbar^2Q_2(z)+\dots
\]
where each $Q_n(z)$ is a meromorphic function with a pole at~$p_i$ of order at most~$m_i$, and we assume $Q_n\equiv0$ for $n\gg0$. If we take a coordinate transformation $z=z(\tilde{z})$, then we replace the potential $Q(z,\hbar)$ by 
\[
\tilde{Q}(\tilde{z},\hbar)=Q(z,\hbar)\left(\frac{dz}{d\tilde{z}}\right)^2-\frac{1}{2}\hbar^2\{z(\tilde{z}),\tilde{z}\}
\]
where $\{z(\tilde{z}),\tilde{z}\}$ is the \emph{Schwarzian derivative}:
\[
\{z(\tilde{z}),\tilde{z}\}=\frac{d}{d\tilde{z}}\left(\frac{d^2z}{d\tilde{z}^2}\bigg/\frac{dz}{d\tilde{z}}\right) - \frac{1}{2}\left(\frac{d^2z}{d\tilde{z}^2}\bigg/\frac{dz}{d\tilde{z}}\right)^2.
\]
The section considered above is represented in this new coordinate by a function $\tilde{y}(\tilde{z})=y(z)(dz/d\tilde{z})^{-1/2}$, and it is straightforward to check that $(\mathcal{D}_\hbar y)(z)(dz/d\tilde{z})^{3/2}=\hbar^2\tilde{y}''(\tilde{z})-\tilde{Q}(\tilde{z},\hbar)\tilde{y}(\tilde{z})$. Thus we get a well defined map on sections.

\begin{definition}
By a \emph{Schr\"odinger equation} on the Riemann surface~$S$, we mean an equation of the form 
\[
\mathcal{D}_\hbar y=0.
\]
\end{definition}

We will be interested in local sections of $\omega_S^{-1/2}$, defined away from the points~$p_i$, which are solutions of a Schr\"odinger equation. Note that the potential functions which define the operator $\mathcal{D}_\hbar$ are independent of the choice of square root~$L$. It follows from the transformation rule that the leading order coefficient of $\tilde{Q}(\tilde{z})$ is 
\begin{align}
\label{eqn:Q0transformation}
\tilde{Q}_0(\tilde{z})=Q_0(z)\left(\frac{dz}{d\tilde{z}}\right)^2,
\end{align}
and therefore we get a meromorphic section $\phi$ of the line bundle $\omega_S^{\otimes2}$ given locally by $\phi(z)=Q_0(z)dz^{\otimes2}$.

\begin{definition}
A meromorphic \emph{quadratic differential} is a meromorphic section of $\omega_S^{\otimes2}$.
\end{definition}

Thus we see that a Schr\"odinger equation on~$S$ determines an associated meromorphic quadratic differential. As we will see below, the properties of this quadratic differential are deeply related to the properties of the corresponding Schr\"odinger equation.

\subsection{The spectral cover}

We have just seen that there is a meromorphic quadratic differential naturally associated to a Sch\"odinger equation on a Riemann surface~$S$. We will now see how this quadratic differential, in turn, determines branched double cover of~$S$.

\begin{definition}
If $\phi$ is a quadratic differential on~$S$, then a zero or pole of $\phi$ is called a \emph{critical point}. A zero or simple pole of~$\phi$ is called a \emph{finite critical point}, and any other critical point is called an \emph{infinite critical point}. A quadratic differential is said to be \emph{complete} if it has no simple poles.
\end{definition}

We will denote the set of all critical points of $\phi$ by $\Crit(\phi)$. We will denote by $\Zer(\phi)$ and $\Pol(\phi)$ the subsets of zeros and poles, and by $\Crit_{<\infty}(\phi)$ and $\Crit_\infty(\phi)$ the subsets of finite and infinite critical points, respectively. In this paper, we will restrict attention to the following class of differentials.

\begin{definition}[\cite{BridgelandSmith}, Definition~2.1]
A meromorphic quadratic differential~$\phi$ on a Riemann surface~$S$ is called a \emph{Gaiotto-Moore-Neitzke (GMN) differential} if 
\begin{enumerate}
\item $\phi$ has simple zeros.
\item $\phi$ has at least one pole.
\item $\phi$ has at least one finite critical point.
\end{enumerate}
\end{definition}

Let $\phi$ be a GMN differential on a compact Riemann surface~$S$ with poles of order $m_i$ at the points $p_i\in S$. We can alternatively view $\phi$ as a holomorphic section 
\[
s_\phi\in H^0(S,\omega_S(E)^{\otimes2}), \quad E=\sum_i\left\lceil\frac{m_i}{2}\right\rceil p_i
\]
with simple zeros at both the zeros and odd order poles of~$\phi$.

\begin{definition}
The \emph{spectral cover} is defined as 
\[
\Sigma_\phi=\{(p,\psi(p)):p\in S,\psi(p)\in F_p,\psi(p)\otimes\psi(p)=s_\phi(p)\}\subseteq F.
\]
where $F$ denotes the total space of the line bundle $\omega_S(E)$.
\end{definition}

The space $\Sigma_\phi$ is a manifold because $\phi$ is assumed to have simple zeros. The natural projection $\pi:\Sigma_\phi\rightarrow S$ is a double cover branched at the simple zeros and odd order poles of~$\phi$. We will denote by $z^*$ the image of $z\in\Sigma_\phi$ under the covering involution $\Sigma_\phi\rightarrow\Sigma_\phi$.

\subsection{WKB solutions}

We will now define a pair of formal solutions of the Schr\"odinger equation. In a local coordinate~$z$, this is equation is 
\begin{align}
\label{eqn:schrodinger}
\hbar^2y''(z,\hbar)-Q(z,\hbar)y(z,\hbar)=0
\end{align}
where $Q(z,\hbar)=Q_0(z)+\hbar Q_1(z)+\dots$ is as above. To solve it, we consider a family $\{S_n(z)\}_{n\geq-1}$ of functions defined by the initial condition 
\[
S_{-1}^2=Q_0(z)
\]
and the recursion relations 
\[
2S_{-1}S_{n+1}+\sum_{\substack{n_1+n_2=n \\ 0\leq n_j\leq n}}S_{n_1}S_{n_2}+\frac{dS_n}{dz}=Q_{n+2}(z)
\]
for $n\geq-1$. There are two families $\{S_n^{(+)}(z)\}_{n\geq-1}$ and $\{S_n^{(-)}(z)\}_{n\geq-1}$ of functions satisfying these recursion relations depending on the choice of root $S_{-1}^{(\pm)}(z)=\pm\sqrt{Q_0(z)}$ for the initial condition. We consider the formal series 
\[
S^{(\pm)}(z,\hbar)=\sum_{n=-1}^\infty\hbar^nS_n^{(\pm)}(z)
\]
and the ``odd part'' 
\[
S_{\text{odd}}(z,\hbar)=\frac{1}{2}\left(S^{(+)}(z,\hbar)-S^{(-)}(z,\hbar)\right).
\]
The following proposition shows how this expression transforms under the change of coordinates $z=z(\tilde{z})$.

\begin{proposition}[\cite{IwakiNakanishi}, Proposition~2.7(b)]
\label{prop:transformSodd}
Let $\tilde{Q}_0(\tilde{z})$ be the leading term in the transformed potential function given by~\eqref{eqn:Q0transformation}, and let $\sqrt{\tilde{Q}_0(\tilde{z})}$ be a square root of this function, chosen so that 
\[
\sqrt{\tilde{Q}_0(\tilde{z})}=\sqrt{Q_0(z)}\frac{dz}{d\tilde{z}}.
\]
Then the formal series $\tilde{S}_{\text{odd}}(\tilde{z},\hbar)$ constructed from $\sqrt{\tilde{Q}_0(\tilde{z})}$ by the above recursion relations satisfies 
\[
\tilde{S}_{\text{odd}}(\tilde{z},\hbar)=S_{\text{odd}}(z,\hbar)\frac{dz}{d\tilde{z}}.
\]
\end{proposition}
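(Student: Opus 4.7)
The plan is to deduce the transformation law for $S_{\text{odd}}$ from the more basic transformation law for $S^{(\pm)}$ themselves, working entirely at the level of the Riccati equation satisfied by these formal series.

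First I would reformulate the construction of $S^{(\pm)}$ via a Riccati equation. Substituting the logarithmic-derivative ansatz $y'/y = S$ into the Schr\"odinger equation $\hbar^2 y'' = Q y$ gives
\[
\hbar^2\left(S^2 + \frac{dS}{dz}\right) = Q(z,\hbar).
\]
Matching coefficients of $\hbar^n$ in this equation reproduces exactly the initial condition $S_{-1}^2 = Q_0$ and the recursion relations defining $S^{(\pm)}$. Hence $S^{(\pm)}$ is characterised as the unique formal series in $\hbar$ (starting at order $\hbar^{-1}$ with leading coefficient $\pm\sqrt{Q_0}$) solving this Riccati equation.

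Next I would determine how Riccati solutions transform under a coordinate change $z = z(\tilde z)$. Given $S(z,\hbar)$ solving the Riccati equation in $z$, I would try the ansatz
\[
T(\tilde z,\hbar) = S(z(\tilde z),\hbar)\,\frac{dz}{d\tilde z} + c(\tilde z),
\]
with $c(\tilde z)$ an $\hbar$-independent correction. A direct expansion of $T^2 + dT/d\tilde z$ shows that the terms linear in $S$ vanish precisely when $c = -\frac{1}{2}(d^2 z/d\tilde z^2)/(dz/d\tilde z)$, and a short calculation using the definition of the Schwarzian given in the excerpt identifies the remaining $S$-independent terms with $-\frac{1}{2}\{z,\tilde z\}$. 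Multiplied by $\hbar^2$ this is exactly the Schwarzian correction appearing in $\tilde Q(\tilde z,\hbar)$, so $T$ solves the Riccati equation in the $\tilde z$-coordinate. Because the compatibility condition $\sqrt{\tilde Q_0} = \sqrt{Q_0}(dz/d\tilde z)$ forces the leading coefficients $\tilde S_{-1}^{(\pm)}$ and $S_{-1}^{(\pm)}(dz/d\tilde z)$ to agree, uniqueness of the Riccati solution with prescribed leading term gives
\[
\tilde S^{(\pm)}(\tilde z,\hbar) = S^{(\pm)}(z,\hbar)\,\frac{dz}{d\tilde z} - \frac{1}{2}\frac{d^2 z/d\tilde z^2}{dz/d\tilde z}.
\]
The correction term is both $\hbar$-independent and sign-independent, so it cancels in the antisymmetric combination $\tilde S_{\text{odd}} = \frac{1}{2}(\tilde S^{(+)} - \tilde S^{(-)})$, yielding the claimed identity.

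The main item requiring care is the sign bookkeeping: one must verify that $\tilde S^{(+)}$ really corresponds to $S^{(+)}$ (and not $S^{(-)}$) under the transformation, which is precisely what the compatibility hypothesis on the square roots stated in the proposition guarantees. Beyond this there is no serious obstacle; the whole argument reduces to the Riccati-equation calculation above together with the uniqueness of its formal series solution. One could alternatively derive the transformation of $S^{(\pm)}$ from the covariance $\tilde y = y\,(dz/d\tilde z)^{-1/2}$ of Schr\"odinger solutions, reading off the transformation of $S = y'/y$ directly and again noticing that the extra $-\frac{1}{2}(z''/z')$ term is sign-independent, but the Riccati derivation is cleaner at the level of formal power series.
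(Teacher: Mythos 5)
Your proof is correct. The paper itself gives no proof, citing Iwaki--Nakanishi (Proposition 2.7(b)) directly, and the Riccati-equation argument you give---deriving $\hbar^2(S^2 + dS/dz) = Q$, checking that $T = S\,(dz/d\tilde z) - \tfrac{1}{2}(z''/z')$ solves the transformed Riccati equation because $c^2 + c' = -\tfrac{1}{2}\{z,\tilde z\}$, invoking uniqueness of the formal series solution with prescribed leading term, and observing that the sign-independent correction cancels in $S_{\mathrm{odd}}$---is the standard route and essentially the one taken in the cited reference.
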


More geometrically, Proposition~\ref{prop:transformSodd} says that there is a well defined (formal series valued) meromorphic 1-form on~$\Sigma_\phi$ given locally by the expression $S_{\text{odd}}(z,\hbar)dz$. In exact WKB~analysis, one is interested in formal series defined by integrating this 1-form along paths in the spectral cover.

\begin{definition}[\cite{IwakiNakanishi}, Definition~2.9]
\label{def:WKBsolutions}
The \emph{WKB solutions} are the formal solutions 
\[
\psi_{\pm}(z,\hbar)=\frac{1}{\sqrt{S_{\text{odd}}(z,\hbar)}}\exp\left(\pm\int^zS_{\text{odd}}(\zeta,\hbar)d\zeta\right)
\]
of equation~\eqref{eqn:schrodinger}.
\end{definition}

The integral appearing in this definition is defined by integrating the coefficient of each power of $\hbar$ in the formal series $S_{\text{odd}}(\zeta,\hbar)$ termwise. To get a well defined formal solution, we must also specify the lower limit of integration. We would like to take this lower limit to be a point of~$\pi^{-1}\Pol(\phi)$, but we are unable to do so because the 1-form $S_{\text{odd}}(z,\hbar)dz$ is not integrable near a pole. To overcome this difficulty, we impose an assumption on the coefficients of the potential function.

\Needspace*{2\baselineskip}
\begin{assumption}[\cite{IwakiNakanishi}, Assumption~2.5]\mbox{}
\label{assumption2.5}
\begin{enumerate}
\item If $p\in S$ is a pole of $Q_n(z)$ for some $n\geq1$, then $p\in\Pol(\phi)$.
\item If $\phi$ has a pole $p$ of order $m\geq3$, then 
\[
\left(\text{order of $Q_n(z)$ at $p$}\right)<1+\frac{m}{2} \quad \text{for all $n\geq1$}.
\]
\item If $\phi$ has a pole at~$p$ of order $m=2$, then 
\begin{enumerate}
\item $Q_n(z)$ has at most a simple pole at $p$ for all $n\geq1$ except for $n=2$.
\item $Q_2(z)$ has a double pole at~$p$ and satisfies 
\[
Q_2(z)=-\frac{1}{4z^2}\left(1+O(z)\right) \quad \text{as $z\rightarrow0$}
\]
where $z$ is a local coordinate around $p$ such that $z(p)=0$.
\end{enumerate}
\end{enumerate}
\end{assumption}

Under this assumption, we introduce the expression 
\[
S_{\text{odd}}^{\text{reg}}(z,\hbar)dz=\left(S_{\text{odd}}(z,\hbar)-\frac{1}{\hbar}\sqrt{Q_0(z)}\right)dz,
\]
which defines a 1-form on~$\Sigma_\phi$.

\begin{theorem}[\cite{IwakiNakanishi}, Proposition~2.8]
Suppose the coefficients $\{Q_n(z)\}$ of the potential function satisfy Assumption~\ref{assumption2.5}. Then for any point $p\in\Pol(\phi)$, the formal series valued 1-form $S_{\text{odd}}^{\text{reg}}(z,\eta)dz$ is integrable at~$p$.
\end{theorem}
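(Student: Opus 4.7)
The plan is to expand the formal series
\[
S_{\mathrm{odd}}^{\mathrm{reg}}(z,\hbar) = \sum_{n \geq 0} \hbar^n \sigma_n(z)
\]
coefficient by coefficient, thereby reducing the claim to integrability of each 1-form $\sigma_n(z)\,dz$ at the pole $p$. Using the recursion for $S_n^{(\pm)}$ together with the identity $S_{\mathrm{odd}} = \tfrac{1}{2}(S^{(+)} - S^{(-)})$, one obtains each $\sigma_n$ inductively as a differential-polynomial combination of $\sqrt{Q_0}, Q_1, \dots, Q_{n+1}$ and their derivatives, carrying a power of $2\sqrt{Q_0}$ in the denominator; in particular $\sigma_0 = Q_1/(2\sqrt{Q_0})$. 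Fix a local coordinate $z$ with $z(p)=0$, and let $m \geq 2$ be the order of the pole of $\phi$ at $p$; it then suffices to prove that $\mathrm{ord}_p \sigma_n > -1$ for every $n \geq 0$.

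For $m \geq 3$, Assumption~\ref{assumption2.5}(2) gives $\mathrm{ord}_p Q_n > -(1 + m/2)$ for all $n \geq 1$, while $\mathrm{ord}_p \sqrt{Q_0} = -m/2$ yields $\mathrm{ord}_p \sigma_0 > -1$ immediately. An induction on $n$ then closes the argument: differentiation decreases the order by at most one, but each occurrence of the factor $1/(2\sqrt{Q_0})$ in the recursion raises the order by $m/2 \geq 3/2$, and these gains more than compensate the losses from the product and derivative terms. Hence every $\sigma_n(z)\,dz$ is integrable at $p$ when $m \geq 3$.

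The delicate case is $m = 2$, and I expect this to be the main obstacle. Here $\sqrt{Q_0}$ has a simple pole with nonzero residue, so the naive order count gives only $\mathrm{ord}_p \sigma_n \geq -1$; the precise coefficient $-1/4$ in Assumption~\ref{assumption2.5}(3)(b) is engineered so that the residue of each $\sigma_n$ at $p$ actually vanishes. One way to handle this is a direct residue computation, tracking how $Q_2(z) = -(4z^2)^{-1}(1+O(z))$ cancels the residue produced by $(\sqrt{Q_0})'/\sqrt{Q_0}$ at each step of the recursion. A more conceptual route is to apply the local change of variable $\zeta = \int^z \sqrt{Q_0(w)}\,dw$: the leading potential in the new coordinate becomes constant, and the Schwarzian contribution generated by the transformation absorbs precisely the $-1/(4z^2)$ piece of $Q_2$, after which the transformed potential coefficients satisfy the stronger order bounds of the $m \geq 3$ case near the image of $p$. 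Either approach, combined with the first case, establishes integrability of $S_{\mathrm{odd}}^{\mathrm{reg}}(z,\hbar)\,dz$ at every pole of $\phi$.
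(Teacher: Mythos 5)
The paper does not prove this statement; it simply cites it as Proposition~2.8 of Iwaki--Nakanishi, so there is no internal proof to compare against. On its own terms, your two-pronged strategy --- expand $S_{\text{odd}}^{\text{reg}}$ coefficient by coefficient, prove $\operatorname{ord}_p S_{\text{odd},n}>-1$ by valuation counting when $m\geq 3$, and isolate a single cancellation forced by the $-1/(4z^2)$ normalisation at double poles --- is correct and is essentially the argument one would write following the cited reference.

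Two points deserve sharpening. First, the $m\geq 3$ induction is stated too loosely: ``these gains more than compensate the losses'' is the right slogan, but it does not distinguish the odd and even parts, and that distinction matters at the base step. The clean hypothesis is $\operatorname{ord}_p S_n^{(\pm)}\geq -1$ for all $n\geq 0$; the recursion then gives
\[
\operatorname{ord}_p S_{n+1}^{(\pm)}\;\geq\;\min\bigl(\operatorname{ord}_p Q_{n+2},\,-2\bigr)+\tfrac{m}{2}\;\geq\;-\tfrac12
\]
for $m\geq 3$, comfortably above $-1$. But $\operatorname{ord}_p S_0^{(\pm)}$ can equal $-1$ exactly, since $S_0^{(\pm)}=\pm Q_1/(2\sqrt{Q_0})-(\sqrt{Q_0})'/(2\sqrt{Q_0})$ carries the order-$(-1)$ logarithmic-derivative term; it is only because this term is \emph{even} under $S^{(+)}\leftrightarrow S^{(-)}$ that it drops out of $\sigma_0=S_{\text{odd},0}=Q_1/(2\sqrt{Q_0})$, giving $\operatorname{ord}_p\sigma_0>-1$. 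You implicitly use this when you write $\sigma_0=Q_1/(2\sqrt{Q_0})$, but the induction should make it explicit --- e.g.\ by tracking $S_{\text{odd},n}$ and $S_{\text{even},n}$ separately, with $\operatorname{ord}_p S_{\text{odd},n}>-1$ and $\operatorname{ord}_p S_{\text{even},n}\geq -1$. Second, for $m=2$ the phrase ``the residue of each $\sigma_n$ vanishes'' slightly understates what happens: the only pole anywhere in the recursion data is the simple pole of $S_{\text{even},0}=\tfrac{1}{2z}+O(1)$, and the lone cancellation
\[
Q_2-(S_{\text{even},0})^2-S_{\text{even},0}'=-\tfrac{1}{4z^2}-\tfrac{1}{4z^2}+\tfrac{1}{2z^2}+O(z^{-1})
\]
at the step $n=0\to n=1$ makes $S_{\text{odd},1}$ (and in fact every $S_{\text{odd},n}$ and $S_{\text{even},n}$ with $n\geq 1$) genuinely holomorphic at $p$; there is no further residue to kill at subsequent steps. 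Your Liouville-coordinate alternative is also viable, but note that the new coordinate sends $p$ to infinity, so integrability must there be reinterpreted as convergence of an improper integral in the $\zeta$-variable rather than as an order count, which is slightly more work to set up rigorously than the direct residue computation.
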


If Assumption~\ref{assumption2.5} is satisfied, we can therefore employ following scheme to define the integral in Definition~\ref{def:WKBsolutions}.

\begin{definition}
\label{def:normalizedWKB}
Let $p\in\Pol(\phi)$. Then the WKB solution \emph{normalized at $p$} is the expression 
\[
\psi_{\pm}(z,\hbar)=\frac{1}{\sqrt{S_{\text{odd}}(z,\hbar)}}\exp\left(\pm\left(\frac{1}{\hbar}\int_a^z\sqrt{Q_0(\zeta)}d\zeta+\int_p^zS_{\text{odd}}^{\text{reg}}(\zeta,\hbar)d\zeta\right)\right)
\]
where $a$ is any zero of $\phi$ independent of~$p$. We will write this as $\psi_{\pm}^{(p,a)}(z,\hbar)$ when we wish to emphasize the choice of~$p$ and~$a$.
\end{definition}

Thus we have a pair of well-normalized formal solutions of~\eqref{eqn:schrodinger}. Each WKB solution can be expanded as a formal power series in $\hbar$ multiplied by an additional factor:
\[
\psi_{\pm}(z,\hbar)=\exp\left(\pm\frac{1}{\hbar}\int^z\sqrt{Q_0(\zeta)}d\zeta\right)\hbar^{1/2}\sum_{k=0}^\infty\hbar^k\psi_{\pm,k}(z).
\]
It is known that the series in this expression is divergent in general, and therefore, in order to get a genuine analytic solution of~\eqref{eqn:schrodinger}, we must take the Borel resummation. We will revisit this issue after reviewing some properties of the foliation induced by a quadratic differential.

\subsection{Voros symbols}

For a fixed quadratic differential $\phi$, let us simplify notation by writing $\Sigma=\Sigma_\phi$, $P_0=\Zer(\phi)$, $P_\infty=\Pol(\phi)$, and $P=\Crit(\phi)$. Let $\widehat{P}$, $\widehat{P}_0$, and~$\widehat{P}_\infty$ denote the preimages of~$P$, $P_0$ and~$P_\infty$, respectively, in the the cover~$\Sigma$. Consider the subsets of the homology groups $H_1(\Sigma\setminus\widehat{P}_0,\widehat{P}_\infty)$ and $H_1(\Sigma\setminus\widehat{P})$ consisting of elements that are invariant under the covering involution:
\begin{align*}
\Sym(H_1(\Sigma\setminus\widehat{P}_0,\widehat{P}_\infty)) &= \{\beta\in H_1(\Sigma\setminus\widehat{P}_0,\widehat{P}_\infty):\beta^*=\beta\}, \\
\Sym(H_1(\Sigma\setminus\widehat{P})) &= \{\gamma\in H_1(\Sigma\setminus\widehat{P}):\gamma^*=\gamma\}.
\end{align*}
We denote the quotients of the homology groups by these subgroups by 
\begin{align*}
\mathcal{H}^\vee &= H_1(\Sigma\setminus\widehat{P}_0,\widehat{P}_\infty)/\Sym(H_1(\Sigma\setminus\widehat{P}_0,\widehat{P}_\infty)), \\
\mathcal{H} &= H_1(\Sigma\setminus\widehat{P})/\Sym(H_1(\Sigma\setminus\widehat{P})).
\end{align*}
Thus we have $\eta=-\eta^*$ for $\eta$ in either of these quotients.

We can now define one of the central concepts of this paper.

\Needspace*{2\baselineskip}
\begin{definition}[\cite{IwakiNakanishi}, Definition~3.1]\mbox{}
\begin{enumerate}
\item For each path $\beta\in\mathcal{H}^\vee$, we write 
\[
W_\beta=\int_\beta S_{\text{odd}}^{\text{reg}}(z,\hbar)dz.
\]
Then the formal series $e^{W_\beta}$ is called the \emph{Voros symbol} for the path~$\beta$.
\item For each cycle $\gamma\in\mathcal{H}$, we write 
\[
V_\gamma=\oint_\gamma S_{\text{odd}}(z,\hbar)dz.
\]
Then the formal series $e^{V_\gamma}$ is called the \emph{Voros symbol} for the cycle~$\gamma$.
\end{enumerate}
\end{definition}

The Voros symbols are well defined because of the identities $S_{\text{odd}}(z^*,\hbar)=-S_{\text{odd}}(z,\hbar)$ and $S_{\text{odd}}^{\text{reg}}(z^*,\hbar)=-S_{\text{odd}}^{\text{reg}}(z,\hbar)$.

\section{Trajectories of quadratic differentials}
\label{sec:TrajectoriesOfQuadraticDifferentials}

\subsection{The horizontal foliation}

As mentioned in the introduction, a quadratic differential on a Riemann surface~$S$ determines an associated foliation of~$S$. This section will be devoted to reviewing the geometry of this foliation.

Let $\phi$ be a quadratic differential on~$S$. In a neighborhood of any point which is not a critical point of~$\phi$, there is a distinguished local coordinate $w$, unique up to transformations of the form $w\mapsto\pm w+\text{constant}$, such that in this local coordinate the quadratic differential can be written 
\[
\phi(w)=dw^{\otimes2}.
\]
Indeed, if we have $\phi(z)=\varphi(z)dz^{\otimes2}$ for some local coordinate $z$ away from the critical points, then $w$ is given by 
\[
w(z)=\int^z\sqrt{\varphi(z)}dz
\]
for some choice of square root of $\varphi(z)$.

\begin{definition}
If $\phi$ is a quadratic differential on~$S$, then a \emph{horizontal trajectory} of~$\phi$ is a curve in $S\setminus\Crit(\phi)$ given by $\Im(w)=\text{constant}$ where $w$ is the distinguished local coordinate. The \emph{horizontal foliation} is the foliation of $S\setminus\Crit(\phi)$ by horizontal trajectories.
\end{definition}

To understand the geometry of the horizontal trajectories, we first consider their behavior near the critical points of~$\phi$. Near a zero of order $k\geq1$, it is known that the horizontal trajectories form a $(k+2)$-pronged singularity as illustrated below for $k=1,2$.
\[
\xy /l3pc/:
(1,0)*{}="O";  
(-0.35,0.72)*{}="U";  
(-0.75,-0.05)*{}="X1";
(-0.6,0.2)*{}="X2"; 
(-0.45,0.45)*{}="X3"; 
(-0.2,1)*{}="X4";  
(0,1.25)*{}="X5";  
(0.15,1.5)*{}="X6"; 
(1.85,1.5)*{}="Y1";
(2,1.25)*{}="Y2";
(2.2,1)*{}="Y3";
(2.45,0.45)*{}="Y4"; 
(2.6,0.2)*{}="Y5";
(2.75,-0.05)*{}="Y6"; 
(1.85,-1.5)*{}="Z1";
(1.55,-1.5)*{}="Z2"; 
(1.25,-1.5)*{}="Z3";
(0.75,-1.5)*{}="Z4";
(0.45,-1.5)*{}="Z5";
(0.15,-1.5)*{}="Z6";
(2.35,0.72)*{}="V";  
(1,-1.5)*{}="W"; 
"O";"U" **\dir{-};  
"O";"V" **\dir{-}; 
"O";"W" **\dir{-}; 
"X4";"Y3" **\crv{(0.9,0.2) & (1.1,0.2)};
"X5";"Y2" **\crv{(0.9,0.5) & (1.1,0.5)}; 
"X6";"Y1" **\crv{(0.9,0.8) & (1.1,0.8)};
"Y4";"Z3" **\crv{(1.35,0) & (1.15,0)};
"Y5";"Z2" **\crv{(1.5,-0.2) & (1.5,-0.3)};
"Y6";"Z1" **\crv{(1.65,-0.4) & (1.85,-0.6)};
"Z4";"X3" **\crv{(0.85,0) & (0.65,0)};
"Z5";"X2" **\crv{(0.5,-0.3) & (0.5,-0.2)};
"Z6";"X1" **\crv{(0.15,-0.6) & (0.35,-0.4)};
(1,0)*{\times};
(1,2)*{k=1};
\endxy
\qquad 
\xy /l3pc/:
(1,0)*{}="O";  
(1,-1.75)*{}="T"; 
(-0.75,0)*{}="U"; 
(1,1.75)*{}="V";  
(2.75,0)*{}="W"; 
(-0.75,-0.75)*{}="U1";
(-0.75,-0.5)*{}="U2";
(-0.75,-0.25)*{}="U3"; 
(-0.75,0.25)*{}="U4"; 
(-0.75,0.5)*{}="U5";
(-0.75,0.75)*{}="U6";
(0.15,1.75)*{}="V1";  
(0.45,1.75)*{}="V2";
(0.75,1.75)*{}="V3";
(1.25,1.75)*{}="V4";  
(1.55,1.75)*{}="V5"; 
(1.85,1.75)*{}="V6"; 
(2.75,0.75)*{}="W1";
(2.75,0.5)*{}="W2";
(2.75,0.25)*{}="W3"; 
(2.75,-0.25)*{}="W4";
(2.75,-0.5)*{}="W5";
(2.75,-0.75)*{}="W6";
(1.85,-1.75)*{}="T1";
(1.55,-1.75)*{}="T2";
(1.25,-1.75)*{}="T3";
(0.75,-1.75)*{}="T4";
(0.45,-1.75)*{}="T5"; 
(0.15,-1.75)*{}="T6";
"O";"T" **\dir{-};   
"O";"U" **\dir{-}; 
"O";"V" **\dir{-};  
"O";"W" **\dir{-}; 
"U4";"V3" **\crv{(0.8,0.2) & (0.8,0.2)};
"U5";"V2" **\crv{(0.5,0.5) & (0.5,0.5)};
"U6";"V1" **\crv{(0.15,0.7) & (0.15,0.7)}; 
"V4";"W3" **\crv{(1.2,0.2) & (1.2,0.2)};
"V5";"W2" **\crv{(1.5,0.5) & (1.5,0.5)}; 
"V6";"W1" **\crv{(1.85,0.7) & (1.85,0.7)};
"W4";"T3" **\crv{(1.2,-0.2) & (1.2,-0.2)};
"W5";"T2" **\crv{(1.5,-0.5) & (1.5,-0.5)}; 
"W6";"T1" **\crv{(1.85,-0.7) & (1.85,-0.7)};
"T4";"U3" **\crv{(0.8,-0.2) & (0.8,-0.2)}; 
"T5";"U2" **\crv{(0.5,-0.5) & (0.5,-0.5)};
"T6";"U1" **\crv{(0.15,-0.7) & (0.15,-0.7)}; 
(1,0)*{\times};
(1,2.25)*{k=2};
\endxy
\qquad
\dots
\]

On the other hand, near a pole of order~2, there is a local coordinate $t$ such that the differential can be written 
\[
\phi(t)=\frac{r}{t^2}dt^{\otimes2}
\]
for some well defined constant $r\in\mathbb{C}^*$.

\begin{definition}
The \emph{residue} of $\phi$ at $p$ is the quantity 
\[
\Res_p(\phi)=\pm4\pi i\sqrt{r},
\]
which is well defined up to a sign.
\end{definition}

Near the double pole $p$, the horizontal foliation can exhibit three possible behaviors in the $t$-plane depending on the value of the residue at~$p$: 
\begin{enumerate}
\item If $\Res_p(\phi)\in\mathbb{R}$, then the horizontal trajectories are concentric circles centered on the pole.
\item If $\Res_p(\phi)\in i\mathbb{R}$, then the horizontal trajectories are radial arcs emanating from the pole.
\item If $\Res_p(\phi)\not\in\mathbb{R}\cup i\mathbb{R}$, then the horizontal trajectories are logarithmic spirals that wrap around the pole.
\end{enumerate}
The diagrams below illustrate the three types of foliations.
\[
\xy /l1.5pc/:
(1,-3)*\xycircle(3,3){-};
(1,-2)*\xycircle(2,2){-};
(1,-1)*\xycircle(1,1){-};
(1,0)*{\bullet};
(1,4)*{\Res_p(\phi)\in\mathbb{R}};
\endxy
\qquad
\qquad
\xy /l1.5pc/:
{\xypolygon12"A"{~:{(2,2):}~>{}}};
{(1,0)\PATH~={**@{-}}'"A1"};
{(1,0)\PATH~={**@{-}}'"A2"};
{(1,0)\PATH~={**@{-}}'"A3"};
{(1,0)\PATH~={**@{-}}'"A4"};
{(1,0)\PATH~={**@{-}}'"A5"};
{(1,0)\PATH~={**@{-}}'"A6"};
{(1,0)\PATH~={**@{-}}'"A7"};
{(1,0)\PATH~={**@{-}}'"A8"};
{(1,0)\PATH~={**@{-}}'"A9"};
{(1,0)\PATH~={**@{-}}'"A10"};
{(1,0)\PATH~={**@{-}}'"A11"};
{(1,0)\PATH~={**@{-}}'"A12"};
(1,0)*{\bullet};
(1,4)*{\Res_p(\phi)\in i\mathbb{R}};
\endxy
\qquad
\xy /l1.5pc/:
(1,0)*{\bullet};
(1.13,0);(-2,0) **\crv{(1.5,0.5) & (0,1.5)};
(1,-0.13);(1,3) **\crv{(1.5,-0.5) & (2.5,1)};
(0.87,0);(4,0) **\crv{(0.5,-0.5) & (2,-1.5)};
(1,0.13);(1,-3) **\crv{(0.5,0.5) & (-0.5,-1)};
(1,4)*{\Res_p(\phi)\not\in\mathbb{R}\cup i\mathbb{R}};
\endxy
\]

Finally, if $p\in\Pol(\phi)$ is a pole of order $m\geq3$, then there is a neighborhood $U$ of $p$ and a collection of $m-2$ distinguished tangent directions $v_i$ at $p$ such that any horizontal trajectory that enters $U$ eventually tends to $p$ and is asymptotic to one of the $v_i$. We illustrate this below for $m=5,6$.
\[
\xy /l3pc/:
{\xypolygon3"T"{~:{(2,0):}~>{}}},
{\xypolygon3"S"{~:{(1.5,0):}~>{}}},
{\xypolygon3"R"{~:{(1,0):}~>{}}},
(1,0)*{}="O"; 
(-0.35,0.72)*{}="U"; 
(2.35,0.72)*{}="V"; 
(1,-1.5)*{}="W"; 
"O";"U" **\dir{-}; 
"O";"V" **\dir{-}; 
"O";"W" **\dir{-}; 
"O";"T1" **\crv{(2,0.75) & (2.25,1.85)};
"O";"T1" **\crv{(0,0.75) & (-0.25,1.85)};
"O";"T2" **\crv{(0,0.4) & (-1.2,0.25)};
"O";"T2" **\crv{(1,-1) & (0,-2.2)};
"O";"T3" **\crv{(1,-1) & (2,-2.2)};
"O";"T3" **\crv{(2,0.4) & (3.2,0.25)};
"O";"S1" **\crv{(1.75,0.56) & (2,1.5)};
"O";"S1" **\crv{(0.25,0.56) & (0,1.5)};
"O";"S2" **\crv{(0,0.3) & (-0.9,0.19)};
"O";"S2" **\crv{(0.9,-0.8) & (0.3,-1.7)};
"O";"S3" **\crv{(2,0.3) & (2.9,0.19)};
"O";"S3" **\crv{(1.1,-0.8) & (1.7,-1.7)};
"O";"R1" **\crv{(1.5,0.5) & (1.75,1)};
"O";"R1" **\crv{(0.5,0.5) & (0.25,1)};
"O";"R2" **\crv{(0.5,0.1) & (-0.3,0.25)};
"O";"R2" **\crv{(0.75,-0.8) & (0.5,-1)};
"O";"R3" **\crv{(1.5,0.1) & (2.3,0.25)};
"O";"R3" **\crv{(1.25,-0.8) & (1.5,-1)};
(1,0)*{\bullet};
(1,2.25)*{m=5};
\endxy
\qquad
\xy /l3pc/:
{\xypolygon4"A"{~:{(2,0):}~>{}}},
{\xypolygon4"B"{~:{(1.5,0):}~>{}}},
{\xypolygon4"C"{~:{(1,0):}~>{}}},
(1,0)*{}="O"; 
(1,-1.75)*{}="T"; 
(-0.75,0)*{}="U"; 
(1,1.75)*{}="V"; 
(2.75,0)*{}="W"; 
"O";"T" **\dir{-}; 
"O";"U" **\dir{-}; 
"O";"V" **\dir{-}; 
"O";"W" **\dir{-}; 
"O";"A1" **\crv{(1,1.5) & (1.5,2.25)};
"O";"A1" **\crv{(2.5,0) & (3.25,0.5)};
"O";"A2" **\crv{(1,1.5) & (0.5,2.25)};
"O";"A2" **\crv{(-0.5,0) & (-1.25,0.5)};
"O";"A3" **\crv{(1,-1.5) & (0.5,-2.25)};
"O";"A3" **\crv{(-0.5,0) & (-1.25,-0.5)};
"O";"A4" **\crv{(1,-1.5) & (1.5,-2.25)};
"O";"A4" **\crv{(2.5,0) & (3.25,-0.5)};
"O";"B1" **\crv{(1,1) & (1.5,1.6)};
"O";"B1" **\crv{(2,0) & (2.6,0.5)};
"O";"B2" **\crv{(1,1) & (0.5,1.6)};
"O";"B2" **\crv{(0,0) & (-0.6,0.5)};
"O";"B3" **\crv{(1,-1) & (0.5,-1.6)};
"O";"B3" **\crv{(0,0) & (-0.6,-0.5)};
"O";"B4" **\crv{(1,-1) & (1.5,-1.6)};
"O";"B4" **\crv{(2,0) & (2.6,-0.5)};
"O";"C1" **\crv{(1,0.75) & (1.25,1)};
"O";"C1" **\crv{(1.75,0) & (2,0.25)};
"O";"C2" **\crv{(1,0.75) & (0.75,1)};
"O";"C2" **\crv{(0.25,0) & (0,0.25)};
"O";"C3" **\crv{(1,-0.75) & (0.75,-1)};
"O";"C3" **\crv{(0.25,0) & (0,-0.25)};
"O";"C4" **\crv{(1,-0.75) & (1.25,-1)};
"O";"C4" **\crv{(1.75,0) & (2,-0.25)};
(1,0)*{\bullet};
(1,2.25)*{m=6};
\endxy
\qquad
\dots
\]

To understand the global geometry of the horizontal foliation, one needs to consider five types of horizontal trajectories.

\Needspace*{2\baselineskip}
\begin{definition}\mbox{}
\begin{enumerate}
\item A \emph{saddle trajectory} is a horizontal trajectory which connects finite critical points of~$\phi$.
\item A \emph{separating trajectory} is a horizontal trajectory which connects a finite and an infinite critical point of~$\phi$.
\item A \emph{generic trajectory} is a horizontal trajectory which connects infinite critical points of~$\phi$.
\item A \emph{closed trajectory} is a horizontal trajectory which is a simple closed curve in $S\setminus\Crit(\phi)$.
\item A \emph{recurrent trajectory} is a horizontal trajectory which is recurrent in at least one direction.
\end{enumerate}
\end{definition}

As shown in Sections~9--11 of~\cite{Strebel}, every horizontal trajectory belongs to one of these five categories.

\subsection{Saddle-free differentials}

Recall that a differential is said to be \emph{saddle-free} if the associated horizontal foliation has no saddle trajectories.

\begin{lemma}[\cite{BridgelandSmith}, Lemma~3.1, \cite{GMN2}, Section~6.3]
\label{lem:saddlefree}
If $\phi$ is a saddle-free GMN differential such that $\Crit_\infty(\phi)\neq\emptyset$, then the associated horizontal foliation has no closed or recurrent trajectories.
\end{lemma}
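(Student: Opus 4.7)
My plan is to argue by contradiction, leveraging Strebel's structure theorem for horizontal foliations of meromorphic quadratic differentials (see~\cite{Strebel}, Chapter~III). Suppose the horizontal foliation of $\phi$ admits either a closed trajectory or a recurrent trajectory. By Strebel's classification, this trajectory lies in a maximal open subset of $S$ of one of two types: a \emph{ring domain} $R\subset S$, an annulus foliated by closed horizontal trajectories, or a \emph{spiral domain} $\Omega\subset S$, a region in which every horizontal trajectory is dense. I will rule out each possibility in turn.

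For the ring domain case, Strebel's theorem asserts that each of the two boundary components of $R$ in $S$ is either a pole of order two (with real residue, so nearby trajectories are concentric circles collapsing to the pole) or a closed chain of saddle connections passing through finite critical points. The saddle-free hypothesis eliminates the second option, so both boundary components are double poles $p_1$ and $p_2$. Because any sufficiently small punctured neighborhood of $p_i$ is foliated by closed trajectories that necessarily belong to $R$, the set $\bar R = R \cup \{p_1,p_2\}$ is open in $S$; being also closed, connectedness of $S$ forces $\bar R = S$. But then $\phi$ has only the two double poles as critical points, contradicting the GMN requirement that $\phi$ possess at least one finite critical point.

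For the spiral domain case, I first observe that $\bar\Omega$ contains no infinite critical point of $\phi$: near any pole of order $m \geq 2$, every horizontal trajectory limits to the pole (as a concentric circle, a radial ray, a logarithmic spiral, or along one of the $m-2$ distinguished directions), so no trajectory passing through a neighborhood of the pole can be recurrent. Given the hypothesis $\Crit_\infty(\phi) \neq \emptyset$ it therefore suffices to prove $\partial\Omega = \emptyset$, because then $\Omega$ is clopen in $S$, forcing $\Omega = S$ and contradicting the previous observation. By Strebel's theorem the 1-dimensional strata of $\partial\Omega$ are horizontal trajectories of critical type; saddle-freeness leaves only separating trajectories as candidates. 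But along a separating trajectory $\gamma$ the ordered pair of adjacent Strebel regions is constant, and near the infinite critical point endpoint of $\gamma$ neither adjacent region can be $\Omega$ by the first observation, so $\gamma \not\subset \partial\Omega$. The same constancy argument excludes finite critical points from $\partial\Omega$: a zero $q \in \partial\Omega$ would have an adjacent sector in $\Omega$, and the prong bounding that sector would then be a separating trajectory with $\Omega$ as an adjacent region, which we have just excluded. I expect this spiral domain analysis to be the main obstacle of the proof; it is where both saddle-freeness and the hypothesis $\Crit_\infty(\phi) \neq \emptyset$ are essentially used, via the constancy argument along separating trajectories. The ring domain analysis, by contrast, reduces to a topological clopen argument once Strebel's classification and the GMN assumption are invoked.
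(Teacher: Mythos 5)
The paper does not prove this lemma; it is quoted from~\cite{BridgelandSmith}, Lemma~3.1, and~\cite{GMN2}, Section~6.3, so there is no in-text argument to compare against. Your framework (Strebel domain decomposition, ruling out ring domains and spiral domains separately) is the standard and correct one, and the ring-domain half is clean: saddle-freeness forces both boundary components to be double poles with real residue, and the clopen argument for $\bar R$ combined with the GMN requirement of a finite critical point gives the contradiction.

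Two issues in the spiral-domain half. First, a small inaccuracy: a horizontal trajectory near a double pole with $\Res_p(\phi)\in\mathbb{R}$ is a concentric circle, which does \emph{not} limit to the pole. Your conclusion that $\bar\Omega$ avoids $\Crit_\infty(\phi)$ is still correct, but for that case the reason is that circles are closed, hence not recurrent, rather than that they converge to the pole.

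Second, and more seriously, the ``constancy of the ordered pair of adjacent Strebel regions along a separating trajectory'' is not a theorem you can invoke: it would require that no critical trajectory accumulates on $\gamma$ from either side, which is precisely the statement that no spiral domain has $\gamma$ on its boundary. Using it to exclude spiral domains is therefore circular. Likewise, when you say the prong bounding a sector of $\Omega$ at a zero ``would then be a separating trajectory,'' you have quietly excluded the possibility that the prong is recurrent, which is exactly the situation one must worry about when a spiral domain touches a zero. A clean way to close both gaps is to invoke the actual statement in Strebel's structure theory: the boundary of a spiral domain is a finite union of critical points and critical trajectories of \emph{finite} $\phi$-length. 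Separating trajectories run to poles of order $\geq 2$, where the $\phi$-metric diverges, so they automatically have infinite length and cannot appear in $\partial\Omega$; finite-length critical trajectories are exactly saddle trajectories, which saddle-freeness forbids. Hence $\partial\Omega$ contains no $1$-dimensional strata, so $\Omega\cup\bar\Omega^c$ disconnects $S$ minus a finite set of zeros, forcing $\Omega=S\setminus\Crit_{<\infty}(\phi)$; but this set contains a punctured neighborhood of some infinite critical point (here is where $\Crit_\infty(\phi)\neq\emptyset$ is used), contradicting your first observation. With that substitution the argument goes through.
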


We have seen that there are only finitely many horizontal trajectories incident to any zero of a quadratic differential. Thus, if $\phi$ satisfies the hypotheses of the above lemma, there are finitely many separating trajectories which divide the surface~$S$ into regions foliated by generic trajectories.

\Needspace*{2\baselineskip}
\begin{definition} Assume $\phi$ is a saddle-free GMN differential such that $\Crit_\infty(\phi)\neq\emptyset$.
\begin{enumerate}
\item A \emph{horizontal strip} is a connected component of the complement of the separating trajectories in~$S$ which can be mapped isomorphically by the distinguished local coordinate to a subset of~$\mathbb{C}$ of the form 
\[
\{w\in\mathbb{C}:a<\Im(w)<b\}.
\]
The trajectories in a horizontal strip are generic, connecting two (not necessarily distinct) poles.

\item A \emph{half plane} is a connected component of the complement of the separating trajectories in~$S$ which can be mapped isomorphically by the distinguished local coordinate to a subset of~$\mathbb{C}$ of the form 
\[
\{w\in\mathbb{C}:\Im(w)>0\}.
\]
The trajectories in a half plane are generic, connecting a fixed pole of order $>2$ to itself.
\end{enumerate}
\end{definition}

If $\phi$ satisfies the hypotheses of Lemma~\ref{lem:saddlefree}, then after removing the finitely many separating trajectories, we are left with an open subsurface which is a disjoint union of horizontal strips and half planes. Each component of the boundary of a horizontal strip in this decomposition contains exactly one finite critical point. Suppose these are both zeros. Then we call the horizontal strip \emph{regular} if these zeros are distinct in~$S$ and \emph{degenerate} if they coincide.

\subsection{The WKB triangulation}

The connection between exact WKB analysis and cluster algebras arises because any complete saddle-free GMN differential determines a corresponding marked bordered surface equipped with an ideal triangulation. In this section, we will review this construction along with the basic theory of triangulated surfaces.

\begin{definition}
A \emph{marked bordered surface} is a pair $(\mathbb{S},\mathbb{M})$ where $\mathbb{S}$ is a compact, connected, oriented, smooth surface with (possibly empty) boundary, and $\mathbb{M}$ is a nonempty finite set of marked points on~$\mathbb{S}$ such that each boundary component of~$\mathbb{S}$ contains at least one marked point. A marked point in the interior of~$\mathbb{S}$ is called a \emph{puncture}, and we write $\mathbb{P}$ for the set of all punctures.
\end{definition}

If $\phi$ is a meromorphic quadratic differential on~$S$ with at least one pole, then we can define a surface $\mathbb{S}$ by performing a oriented real blowup of~$S$ at each pole of~$\phi$ of order~$\geq3$. We define a finite set $\mathbb{M}\subseteq\mathbb{S}$ consisting of punctures given by the poles of~$\phi$ of order~$\leq2$ and points on~$\partial\mathbb{S}$ given by the distinguished tangent directions described above. In this way, we get a marked bordered surface associated to~$\phi$.

\begin{definition}
Let $(\mathbb{S},\mathbb{M})$ be a marked bordered surface. An \emph{arc} in $(\mathbb{S},\mathbb{M})$ is a smooth path $\gamma$ in $\mathbb{S}$ connecting points of $\mathbb{M}$ whose interior lies in the interior of $\mathbb{S}$ and which has no self-intersections in its interior. We also require that $\gamma$ is not homotopic, relative to its endpoints, to a single point or to a path in $\partial\mathbb{S}$ whose interior contains no marked points. Two arcs are considered to be equivalent if they are related by a homotopy through such arcs. A path that connects two marked points and lies entirely on the boundary of~$\mathbb{S}$ without passing through a third marked point is called a \emph{boundary segment}.
\end{definition}

\begin{definition}
Two arcs are said to be \emph{compatible} if there exist curves in their respective equivalence classes that do not intersect in the interior of $\mathbb{S}$. A maximal collection of pairwise compatible arcs is called an \emph{ideal triangulation} of~$(\mathbb{S},\mathbb{M})$. The arcs of an ideal triangulation cut $\mathbb{S}$ into regions called \emph{ideal triangles}.
\end{definition}

Note that an ideal triangle may have fewer than three distinct sides. In this case, it is called a \emph{self-folded triangle}, and it looks like the diagram below.
\[
\xy /l0.6pc/:
(0,-4)*{}="N"; 
(0,8)*{}="S"; 
(0,0);"S" **\dir{-}; 
(0,0)*{\bullet}; 
"S"*{\bullet}; 
"S";"N" **\crv{(-12,-4) & (0,-4)}; 
"S";"N" **\crv{(12,-4.5) & (0,-4)}; 
\endxy
\]

If $\phi$ is a complete and saddle-free differential on a compact Riemann surface~$S$, we have seen that the separating trajectories divide the surface $S$ into finitely many horizontal strips and half planes. If we choose a single generic trajectory from each of the horizontal strips, we get a collection of arcs which define an ideal triangulation of the associated marked bordered surface.

\begin{definition}
The ideal triangulation constructed from a quadratic differential in this way is called the \emph{WKB triangulation}.
\end{definition}

As we will see below, there are Voros symbols and cluster coordinates naturally indexed by the edges of the WKB triangulation.

\subsection{Foliation of the spectral cover}
\label{sec:FoliationOfTheSpectralCover}

Suppose $\phi$ is a GMN differential. Then the inverse image of the horizontal foliation of $S\setminus\Crit(\phi)$ under the covering map~$\pi$ is a foliation of $\Sigma_\phi\setminus\pi^{-1}\Crit(\phi)$.

\begin{lemma}
Let $\beta$ be a generic trajectory of the quadratic differential $\phi$. Then the real part of the distinguished local coordinate $w(z)$ is increasing or decreasing along~$\beta$.
\end{lemma}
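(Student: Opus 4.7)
The argument is essentially local and rests on the fact that the distinguished coordinate is a local biholomorphism. First I would parametrize $\beta$ by a smooth real parameter $t$ so that $z(t)$ is a smooth embedded curve in $S \setminus \Crit(\phi)$ with $\frac{dz}{dt} \neq 0$. In a neighborhood of any point of $\beta$, the distinguished coordinate $w$ satisfies $\phi = dw^{\otimes 2}$ and, being defined as the integral $\int^z \sqrt{\varphi(z)}\,dz$ away from the critical points, is a local biholomorphism. In particular $\frac{dw}{dz}(z(t)) \neq 0$ at every point of $\beta$.

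Next I would exploit the defining property of a horizontal trajectory, namely $\Im(w(z(t))) = \text{constant}$. Differentiating gives $\frac{d}{dt}\Im\bigl(w(z(t))\bigr) = 0$. On the other hand, by the chain rule,
\[
\frac{d}{dt}w(z(t)) = \frac{dw}{dz}(z(t))\cdot \frac{dz}{dt}
\]
is a product of two nonzero complex numbers, hence nonzero. Combining these two facts, $\frac{d}{dt}w(z(t))$ is a nonzero complex number whose imaginary part vanishes, so it is a nonzero real number. Thus $\frac{d}{dt}\Re(w(z(t))) \neq 0$ at every $t$, and by continuity its sign is constant on the connected set $\beta$; hence $\Re(w)$ is monotonic.

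The only subtle point is that the distinguished coordinate $w$ is defined only up to the transformation $w \mapsto \pm w + \text{constant}$, and only locally. However, the property of being monotonic is clearly invariant under $w \mapsto \pm w + c$, so the local statement is unambiguous. For the global statement I would note that a generic trajectory in a saddle-free GMN differential, as recalled in the previous subsections, is an embedded arc lying in a single horizontal strip or half plane, where the distinguished coordinate extends as a single-valued biholomorphism onto an explicit subset of $\mathbb{C}$ (either a horizontal strip or an upper half plane). Therefore a consistent choice of $w$ covers all of $\beta$, and the monotonicity direction is well defined.

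The ``hard part'' here is really just bookkeeping: unpacking the definitions carefully enough to see that the statement reduces to the nonvanishing of $dw/dz$ on $S \setminus \Crit(\phi)$ together with the constancy of $\Im(w)$ along $\beta$. There is no analytical obstacle since $\beta$ is assumed to avoid the critical locus where $w$ fails to be a coordinate.
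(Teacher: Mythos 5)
Your proof is correct and takes essentially the same approach as the paper's: both rest on the observation that the constancy of $\Im w$ along a horizontal trajectory, combined with the nonvanishing of $w'(z)=\sqrt{\varphi(z)}$ away from the critical locus, forces $\frac{d}{dt}\Re w \neq 0$. The paper phrases the key step as a proof by contradiction (if $\frac{d}{dt}\Re w$ vanished, then $w'$ would vanish and $\phi$ would have a zero on $\beta$, contradicting genericity), while you argue directly that $\frac{d}{dt}w$ is a nonzero real number; these are the same argument. Your extra remarks on the $\pm w + c$ ambiguity and globalization over a single horizontal strip or half plane are sound and slightly more careful than the paper, though not required.
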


\begin{proof}
Suppose the trajectory $\beta$ is given by a map $\gamma:(0,1)\rightarrow S$ with $\gamma'(t)\neq0$ for all $t\in(0,1)$. By definition of a trajectory, we know that the imaginary part $\Im w$ is constant along~$\beta$, that is, $\frac{d}{dt}\Im w(\gamma(t))=0$ for all $t\in(0,1)$. If we also have $\frac{d}{dt}\Re w(\gamma(t))=0$ at some~$t=t_0$, then 
\[
w'(\gamma(t_0))\gamma'(t_0)=\frac{d}{dt}\Re w(\gamma(t))\big\rvert_{t=t_0}+i\frac{d}{dz}\Im w(\gamma(t))\big\rvert_{t=t_0}=0,
\]
so $w'(\gamma(t_0))=0$. But then $\phi(z)=(w'(z))^2dz^{\otimes2}$ has a zero at~$z=\gamma(t_0)$, contradicting the fact that $\beta$ is a generic trajectory. Hence $\frac{d}{dt}\Re w(\gamma(t))\neq0$ for all $t\in(0,1)$, and $\Re w$ is increasing or decreasing along~$\beta$.
\end{proof}

Note that the function $w(z)=\int^z\sqrt{\varphi}$ is well defined on the cover $\Sigma_\phi$. Thus we see that each leaf in the foliation of $S\setminus\Crit(\phi)$ has a natural orientation defined by the following rule: The real part of the function $w(z)$ increases along a leaf of the foliation in the positive direction.

In the following, it will be useful to have a way of drawing pictures of the Riemann surface~$\Sigma_\phi$. To this end, let us choose branch cuts on the surface~$S$. A branch cut is a path on~$S$ connecting two branch points for the covering map~$\pi$. If we choose a collection of branch cuts so that each branch point is an endpoint of some branch cut, then we can choose an embedding $S\rightarrow\Sigma_\phi$ which is piecewise continuous and has discontinuities along the branch cuts. We typically represent the image of this embedding by drawing its projection to~$S$ and indicating the branch cuts by wavy lines on~$S$. The image of our embedding in~$\Sigma_\phi$ is called the \emph{first sheet}, and its complement in~$\Sigma_\phi$ is called the \emph{second sheet}. When drawing pictures of the cover $\Sigma_\phi$, we assign the symbols $\oplus$ and~$\ominus$ to the preimages of the poles in such a way that any leaf of the foliation is oriented from~$\ominus$ to~$\oplus$.

With these conventions, we can describe an important collection of elements of $\mathcal{H}$ and~$\mathcal{H}^\vee$. To define them, suppose $\phi$ is a complete saddle-free differential, and let $j$ be a generic trajectory. If $j$ is contained in a regular horizontal strip~$D_j$ which does not surround a degenerate horizontal strip, then we define $\gamma_j\in\mathcal{H}$ and $\beta_j\in\mathcal{H}^\vee$ as in the pictures
\[
\xy /l2pc/:
(0,-2)*{\bullet}="21";
(3,0)*{\bullet}="12";
(1,0)*{\times}="22";
(-1,0)*{\times}="32";
(-3,0)*{\bullet}="42";
(0,2)*{\bullet}="23";
(2,0)*{}="1";
(-2,0)*{}="2";
{"22"\PATH~={**@{~}}'"32"},
{"12"\PATH~={**@{-}}'"22"},
{"32"\PATH~={**@{-}}'"42"},
{"21"\PATH~={**@{-}}'"22"},
{"21"\PATH~={**@{-}}'"32"},
{"22"\PATH~={**@{-}}'"23"},
{"32"\PATH~={**@{-}}'"23"},
(0,-2.4)*{\ominus};
(0,2.4)*{\ominus};
(-3.4,0)*{\oplus};
(3.4,0)*{\oplus};
"1";"2" **\crv{(2,-1) & (-2,-1)}; 
"1";"2" **\crv{(2,1) & (-2,1)}; 
(1,-0.65)*{<};
(1,0.7)*{>};
(1.3,-0.8)*{\gamma_j};
\endxy
\qquad
\qquad
\xy /l2pc/:
(0,-2)*{\bullet}="21";
(3,0)*{\bullet}="12";
(1,0)*{\times}="22";
(-1,0)*{\times}="32";
(-3,0)*{\bullet}="42";
(0,2)*{\bullet}="23";
(0,0)*{}="0";
{"22"\PATH~={**@{~}}'"32"},
{"12"\PATH~={**@{-}}'"22"},
{"32"\PATH~={**@{-}}'"42"},
{"21"\PATH~={**@{-}}'"22"},
{"21"\PATH~={**@{-}}'"32"},
{"22"\PATH~={**@{-}}'"23"},
{"32"\PATH~={**@{-}}'"23"},
(0,-2.4)*{\ominus};
(0,2.4)*{\ominus};
(-3.4,0)*{\oplus};
(3.4,0)*{\oplus};
{"21"\PATH~={**@{-}}'"0"},
{"0"\PATH~={**@{.}}'"23"},
(0,-1)*{\vee};
(0,1)*{\vee};
(-0.25,-0.25)*{\beta_j};
\endxy
\]
In these pictures, we illustrate the portion of the first sheet that lies over $D_j$. In the second picture, the dotted line indicates the portion of the curve that lies on the second sheet of the cover. On the other hand, suppose $j$ and $k$ are generic trajectories contained in the horizontal strips~$D_j$ and~$D_k$, respectively, where $D_j$ is a degenerate horizontal strip, and $D_k$ is the regular horizontal strip surrounding~$D_j$. Then we define $\gamma_j$,~$\gamma_k\in\mathcal{H}$ as in the pictures 
\[
\xy /l2pc/:
(0,-3)*{\bullet}="1";
(0,-2)*{\times}="2";
(0,0)*{\bullet}="3";
(0,1)*{\times}="4";
(0,3)*{\bullet}="5";
(2.5,-2)*{}="a";
(0,-1)*{}="b";
(2.5,1)*{}="c";
{"1"\PATH~={**@{-}}'"2"},
{"4"\PATH~={**@{-}}'"5"},
{"a"\PATH~={**@{~}}'"2"},
{"c"\PATH~={**@{~}}'"4"},
"2";"5" **\crv{(2.5,-1) & (2.5,2)}; 
"2";"5" **\crv{(-2.5,-1) & (-2.5,2)}; 
"3";"4" **\crv{(0.5,0.25) & (0.5,0.75)}; 
"b";"5" **\crv{(1.25,-1) & (1.75,1)}; 
"b";"4" **\crv{(-1.25,-1) & (-1.25,1)}; 
(0,-0.4)*{\oplus};
(0,-3.4)*{\ominus};
(0,3.4)*{\oplus};
(0.5,-2)*{}="x";
(-0.25,-2)*{}="y";
(0.5,1)*{}="z";
(-0.25,1)*{}="w";
(-0.5,-2)*{}="q";
(0.25,-2)*{}="r";
(-0.5,1)*{}="s";
(0.25,1)*{}="t";
"x";"y" **\crv{(0.25,-2.5) & (-0.7,-2.5)}; 
"y";"w" **\crv{(1,-1) & (1,0)}; 
"z";"w" **\crv{(0.25,1.5) & (-0.7,1.5)}; 
"x";"z" **\crv{~*=<2pt>{.} (1.5,-1) & (1.5,0)}; 
"q";"r" **\crv{(-0.25,-2.5) & (0.7,-2.5)}; 
"r";"t" **\crv{~*=<2pt>{.} (-1,-1) & (-1,0)}; 
"s";"t" **\crv{(-0.25,1.5) & (0.7,1.5)}; 
"q";"s" **\crv{(-1.5,-1) & (-1.5,0)}; 
(1.25,-0.5)*{\wedge};
(0.68,-0.5)*{\vee};
(-0.68,-0.5)*{\wedge};
(-1.25,-0.5)*{\vee};
(1.45,0)*{\gamma_j};
(-1.45,0)*{\gamma_k};
\endxy
\qquad
\qquad
\xy /l2pc/:
(0,-3)*{\bullet}="1";
(0,-2)*{\times}="2";
(0,0)*{\bullet}="3";
(0,1)*{\times}="4";
(0,3)*{\bullet}="5";
(2.5,-2)*{}="a";
(0,-1)*{}="b";
(2.5,1)*{}="c";
{"1"\PATH~={**@{-}}'"2"},
{"4"\PATH~={**@{-}}'"5"},
{"a"\PATH~={**@{~}}'"2"},
{"c"\PATH~={**@{~}}'"4"},
"2";"5" **\crv{(2.5,-1) & (2.5,2)}; 
"2";"5" **\crv{(-2.5,-1) & (-2.5,2)}; 
"3";"4" **\crv{(-0.5,0.25) & (-0.5,0.75)}; 
"b";"5" **\crv{(-1.25,-1) & (-1.75,1)}; 
"b";"4" **\crv{(1.25,-1) & (1.25,1)}; 
(0,-0.4)*{\ominus};
(0,-3.4)*{\ominus};
(0,3.4)*{\oplus};
(0.5,-2)*{}="x";
(-0.25,-2)*{}="y";
(0.5,1)*{}="z";
(-0.25,1)*{}="w";
(-0.5,-2)*{}="q";
(0.25,-2)*{}="r";
(-0.5,1)*{}="s";
(0.25,1)*{}="t";
"x";"y" **\crv{(0.25,-2.5) & (-0.7,-2.5)}; 
"y";"w" **\crv{(1,-1) & (1,0)}; 
"z";"w" **\crv{(0.25,1.5) & (-0.7,1.5)}; 
"x";"z" **\crv{~*=<2pt>{.} (1.5,-1) & (1.5,0)}; 
"q";"r" **\crv{(-0.25,-2.5) & (0.7,-2.5)}; 
"r";"t" **\crv{~*=<2pt>{.} (-1,-1) & (-1,0)}; 
"s";"t" **\crv{(-0.25,1.5) & (0.7,1.5)}; 
"q";"s" **\crv{(-1.5,-1) & (-1.5,0)}; 
(1.25,-0.5)*{\wedge};
(0.68,-0.5)*{\vee};
(-0.68,-0.5)*{\wedge};
(-1.25,-0.5)*{\vee};
(1.45,0)*{\gamma_k};
(-1.45,0)*{\gamma_j};
\endxy
\]
and we define $\beta_j$,~$\beta_k\in\mathcal{H}^\vee$ as in the pictures
\[
\xy /l2pc/:
(0,-3)*{\bullet}="1";
(0,-2)*{\times}="2";
(0,0)*{\bullet}="3";
(0,1)*{\times}="4";
(0,3)*{\bullet}="5";
(2.5,-2)*{}="a";
(0,-1)*{}="b";
(2.5,1)*{}="c";
{"1"\PATH~={**@{-}}'"2"},
{"4"\PATH~={**@{-}}'"5"},
{"a"\PATH~={**@{~}}'"2"},
{"c"\PATH~={**@{~}}'"4"},
"2";"5" **\crv{(2.5,-1) & (2.5,2)}; 
"2";"5" **\crv{(-2.5,-1) & (-2.5,2)}; 
"3";"4" **\crv{(0.5,0.25) & (0.5,0.75)}; 
"b";"5" **\crv{(1.25,-1) & (1.75,1)}; 
"b";"4" **\crv{(-1.25,-1) & (-1.25,1)}; 
(0,-0.4)*{\oplus};
(0,-3.4)*{\ominus};
(0,3.4)*{\oplus};
(0.65,1)*{}="m";
"3";"5" **\crv{(-1,0.5) & (-0.75,2)}; 
"3";"m" **\crv{(0.5,0.2) & (0.64,0.78)}; 
"m";"5" **\crv{~*=<2pt>{.} (0.65,0.8) & (0.75,2.1)};
(-0.65,1.5)*{\vee};
(0.65,1.5)*{\wedge};
(0.7,0.25)*{\beta_j};
(-1,1.5)*{\beta_k};
\endxy
\qquad
\qquad
\xy /l2pc/:
(0,-3)*{\bullet}="1";
(0,-2)*{\times}="2";
(0,0)*{\bullet}="3";
(0,1)*{\times}="4";
(0,3)*{\bullet}="5";
(2.5,-2)*{}="a";
(0,-1)*{}="b";
(2.5,1)*{}="c";
{"1"\PATH~={**@{-}}'"2"},
{"4"\PATH~={**@{-}}'"5"},
{"a"\PATH~={**@{~}}'"2"},
{"c"\PATH~={**@{~}}'"4"},
"2";"5" **\crv{(2.5,-1) & (2.5,2)}; 
"2";"5" **\crv{(-2.5,-1) & (-2.5,2)}; 
"3";"4" **\crv{(-0.5,0.25) & (-0.5,0.75)}; 
"b";"5" **\crv{(-1.25,-1) & (-1.75,1)}; 
"b";"4" **\crv{(1.25,-1) & (1.25,1)}; 
(0,-0.4)*{\ominus};
(0,-3.4)*{\ominus};
(0,3.4)*{\oplus};
(0.65,1)*{}="m";
"3";"5" **\crv{(-1,0.5) & (-0.75,2)}; 
"3";"m" **\crv{(0.5,0.2) & (0.64,0.78)}; 
"m";"5" **\crv{~*=<2pt>{.} (0.65,0.8) & (0.75,2.1)};
(-0.65,1.5)*{\vee};
(0.65,1.5)*{\wedge};
(-0.7,0.25)*{\beta_j};
(1,1.5)*{\beta_k};
\endxy
\]

\section{Borel resummation}
\label{sec:BorelResummation}

\subsection{Definition and basic properties}

In Section~\ref{sec:BackgroundOnWKBAnalysis}, we constructed the WKB solutions, a pair of formal solutions of the Schr\"odinger equation. As we have already mentioned, these formal series are generally divergent, and therefore, in order to get genuine analytic solutions, we employ a renormalization method known as the Borel resummation. In this short section, we review those aspects of the Borel resummation method that will be important in what follows. Our treatment is based on~\cite{IwakiNakanishi}; further details can be found in~\cite{Costin}.

\Needspace*{4\baselineskip}
\begin{definition}[\cite{IwakiNakanishi}, Definition~2.10] \mbox{}
\begin{enumerate}
\item A formal power series $f(\hbar)=\sum_{n=0}^\infty f_n\hbar^n$ is said to be \emph{Borel summable} if the formal power series 
\[
f_B(y)=\sum_{n=1}^\infty f_n\frac{y^{n-1}}{(n-1)!}
\]
converges near $y=0$, possesses an analytic continuation to a domain $\Omega$ containing the half line $\{y\in\mathbb{C}:\Re(y)\geq0,\Im(y)=0\}$, and satisfies the bound 
\[
|f_B(y)|\leq C_1e^{C_2|y|}
\]
on~$\Omega$ for some constants $C_1$,~$C_2>0$.
\item If $f(\hbar)=\sum_{n=0}^\infty f_n\hbar^n$ is a Borel summable formal power series, we define the \emph{Borel sum} of $f(\hbar)$ by 
\[
\mathcal{S}[f](\hbar)=f_0+\int_0^\infty e^{-y/\hbar}f_B(y)dy
\]
where the integral is taken along the positive real axis.
\item An expression of the form $f(\hbar)=e^{s/\hbar}\hbar^\rho\sum_{n=0}^\infty f_n\hbar^n$, where $g(\hbar)=\sum_{n=0}^\infty f_n\hbar^n$ is a formal power series and $\rho$,~$s\in\mathbb{C}$, is said to be \emph{Borel summable} if $g(\hbar)$ is Borel summable. In this case, its \emph{Borel sum} is defined as $\mathcal{S}[f](\hbar)=e^{s/\hbar}\hbar^\rho\mathcal{S}[g](\hbar)$.
\end{enumerate}
\end{definition}

We will use the following properties of Borel sums throughout our discussion.

\Needspace*{2\baselineskip}
\begin{proposition}[\cite{IwakiNakanishi}, Proposition~2.11]\mbox{}
\begin{enumerate}
\item If $f(\hbar)$ is a Borel summable formal power series, then the Borel sum $\mathcal{S}[f](\hbar)$ is an analytic function on a domain 
\[
\mathbb{H}(\varepsilon)=\{\hbar\in\mathbb{C}:|\hbar|<\epsilon \text{ and } \Re(\hbar)>0\}
\]
for some $\varepsilon>0$. Moreover, $\mathcal{S}[f](\hbar)$ is asymptotically expanded to $f(\hbar)$ when $\hbar\rightarrow0$.
\item If $f(\hbar)$ and $g(\hbar)$ are Borel summable, then 
\[
\mathcal{S}[f+g]=\mathcal{S}[f]+\mathcal{S}[g], \quad \mathcal{S}[f\cdot g]=\mathcal{S}[f]\cdot\mathcal{S}[g].
\]
\item Let $A(t)=\sum_{k=0}^\infty A_kt^k$ be a power series that converges near $t=0$. If a formal power series $f(\hbar)=\sum_{n=1}^\infty f_n\hbar^n$ is Borel summable, then the formal power series $A(f(\hbar))=\sum_{k=0}^\infty A_k(f(\hbar))^k$ is also Borel summable, and its Borel sum is given by $\mathcal{S}[A(f(\hbar))]=A(\mathcal{S}[f](\hbar))$.
\end{enumerate}
\end{proposition}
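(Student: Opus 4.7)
The plan is to handle the three parts in sequence, with the substance concentrated in part (1); parts (2) and (3) reduce to bookkeeping once the Laplace-transform framework is in place.

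For part (1), the key estimate is that for $\hbar$ with $\Re(1/\hbar)$ sufficiently large, the exponential decay of $e^{-y/\hbar}$ dominates the growth $|f_B(y)| \leq C_1 e^{C_2|y|}$ along the positive real axis, so the integral $\int_0^\infty e^{-y/\hbar} f_B(y)\,dy$ converges absolutely. Analyticity in $\hbar$ follows from differentiating under the integral sign, justified because the integrand and its $\hbar$-derivatives are uniformly dominated on compacta inside the region $\Re(1/\hbar) > C_2$; this region is a disk tangent to the imaginary axis at the origin, and I would choose $\varepsilon$ small enough (narrowing the sector if necessary) that $\mathbb{H}(\varepsilon)$ lies inside it. For the asymptotic expansion, I would fix $N \geq 1$ and split $f_B(y) = \sum_{n=1}^{N} f_n \frac{y^{n-1}}{(n-1)!} + R_N(y)$, where $R_N$ is analytic near $0$ and inherits the same type of exponential bound. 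The Gamma identity $\int_0^\infty e^{-y/\hbar}\frac{y^{n-1}}{(n-1)!}\,dy = \hbar^n$ handles the first $N$ terms exactly, and a Watson-type argument (breaking the integral into a neighborhood of $0$ where $|R_N(y)| = O(y^N)$, and a tail controlled by the exponential bound) estimates the remainder as $O(|\hbar|^{N+1})$.

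For part (2), additivity is immediate from linearity of both the Borel transform and the Laplace integral. For the product, I would use the formula $(fg)_B = f_0\, g_B + g_0\, f_B + f_B * g_B$ with convolution $(f_B * g_B)(y) = \int_0^y f_B(y-t)g_B(t)\,dt$, which follows by multiplying the defining power series and shifting indices. Then $\mathcal{S}[fg] = \mathcal{S}[f]\cdot\mathcal{S}[g]$ is the classical identity that the Laplace transform carries convolution to pointwise product, verified by Fubini once absolute convergence in the $(y,t)$-plane is checked against the exponential bounds.

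For part (3), I would apply part (2) inductively to the polynomial truncations $A_N(t) = \sum_{k=0}^N A_k t^k$, obtaining $\mathcal{S}[A_N(f(\hbar))] = A_N(\mathcal{S}[f](\hbar))$. Since $f$ has no constant term, $\mathcal{S}[f](\hbar) \to 0$ as $\hbar \to 0$ in $\mathbb{H}(\varepsilon)$, so the right side converges to $A(\mathcal{S}[f](\hbar))$ on a possibly smaller half-disk. To pass to the limit on the left I need Borel summability of $A(f)$, which follows from a uniform bound of the form $|(f^k)_B(y)| \leq M_1^k \frac{|y|^{k-1}}{(k-1)!} e^{M_2 |y|}$, proved by induction on $k$ via the convolution formula. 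Summing $|A_k|$ times this bound gives a majorant behaving like $A(M_1|y|)e^{M_2|y|}$, which is integrable against $e^{-y/\hbar}$ provided the radius of convergence of $A$ is respected, and dominated convergence then interchanges the sum and the Laplace integral. The main obstacle is precisely this uniform convolution estimate: it is what forces the $\frac{|y|^{k-1}}{(k-1)!}$ decay in $k$ that makes the $k$-sum converge, and threading the domains of $\hbar$ so that both the integral and the composed series $A \circ \mathcal{S}[f]$ stay in their respective regions of convergence is the subtle piece.
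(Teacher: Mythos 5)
The paper does not prove this proposition; it is quoted directly from Iwaki--Nakanishi (their Proposition~2.11) as a known fact, so there is no in-paper proof against which to compare your argument. Evaluated on its own terms, your outline of parts (2) and (3) is sound: the convolution decomposition $(fg)_B = f_0\,g_B + g_0\,f_B + f_B * g_B$ is the right reduction for the product, the $\frac{|y|^{k-1}}{(k-1)!}$ inductive bound from iterated convolution is exactly the mechanism that makes the $k$-sum over $A_k (f^k)_B$ converge absolutely, and the domination-then-limit argument for part (3) is the correct way to pass from polynomial truncations to $A$.

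There is, however, a genuine gap in your part (1). The fixed-contour integral $\int_0^\infty e^{-y/\hbar} f_B(y)\,dy$ converges absolutely, and is analytic by differentiation under the integral, precisely on the region $\{\hbar : \Re(1/\hbar) > C_2\}$. As you observe, this is a disk tangent to the imaginary axis at the origin. But no half-disk $\mathbb{H}(\varepsilon) = \{\,|\hbar| < \varepsilon,\ \Re(\hbar)>0\,\}$ is contained in that disk: for any $\varepsilon > 0$ the points $\hbar = \delta + \tfrac{i\varepsilon}{2}$ with $0 < \delta < C_2\varepsilon^2/4$ lie in $\mathbb{H}(\varepsilon)$ but satisfy $\Re(1/\hbar) < C_2$. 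So "choose $\varepsilon$ small enough that $\mathbb{H}(\varepsilon)$ lies inside it" is not available, and "narrowing the sector" does not help because $\mathbb{H}(\varepsilon)$ is a full half-disk, not a sector you are free to shrink. The standard repair is to rotate the contour: integrate along the rays $e^{i\theta}\mathbb{R}_{\geq 0}$ for $|\theta| < \delta$, which is legitimate (by Cauchy) provided $\Omega$ contains a genuine sector $\{|\arg y| < \delta\}$ on which the exponential bound holds; each rotated integral is analytic on the rotated disk $\{\Re(e^{i\theta}/\hbar) > C_2\}$, and the union of these over $|\theta|<\delta$ contains $\mathbb{H}(\sin(\delta)/C_2)$. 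This step is not cosmetic --- it is where the half-disk domain actually comes from --- and it imposes the implicit hypothesis that $\Omega$ is a sector, which the paraphrased definition ("a domain containing the half-line") does not literally guarantee. The same sectoriality is also what makes $\Re(1/\hbar) \gtrsim 1/|\hbar|$ fail uniformly on $\mathbb{H}(\varepsilon)$, so your Watson-type remainder estimate for the asymptotic expansion should likewise be understood as holding on proper sub-sectors, or after the contour rotation that extends the function beyond the tangent disk.
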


\subsection{Borel sums of WKB solutions and Voros symbols}

In Section~\ref{sec:VorosSymbolsAsClusterCoordinates}, we will study the Borel sums of the WKB solutions and Voros symbols, and we will prove that the latter coincide with Fock-Goncharov coordinates of framed local systems. Therefore the Borel summability of these formal series is a crucial result. As explained in~\cite{IwakiNakanishi}, this follows from unpublished work of Koike and~Sch\"afke which establishes the Borel summability of $S_\text{odd}(z,\hbar)$. For a sketch of the proof of this result in a more specialized context, see~\cite{Takei17}.

\begin{theorem}[\cite{IwakiNakanishi}, Corollary~2.21]
\label{thm:Borelsummability}
Let $\phi$ be a complete, saddle-free GMN differential satisfying Assumptions~\ref{assumption2.5}.
\begin{enumerate}
\item Let $\beta$ be a path in the spectral cover that projects to a generic trajectory of~$\phi$. Then the formal power series $\int_\beta S_{\text{odd}}^{\text{reg}}(z,\hbar)dz$ is Borel summable.
\item Let $D$ be a horizontal strip or half plane defined by~$\phi$. Then a WKB solution, normalized as in Definition~\ref{def:normalizedWKB}, is Borel summable at each point in~$D$. The Borel sum is an analytic solution of~\eqref{eqn:schrodinger} on~$D$, which is also analytic in $\hbar$ in a domain $\mathbb{H}(\varepsilon)$ for some $\varepsilon>0$.
\end{enumerate}
\end{theorem}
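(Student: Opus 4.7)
The plan is to bootstrap everything from the pointwise Borel summability of $S_{\text{odd}}(z,\hbar)$ itself, which is the content of the Koike--Sch\"afke theorem that the authors cite. Once one knows that $S_{\text{odd}}(z,\hbar)$ is Borel summable as a formal series in $\hbar$ for each $z$ lying off the critical set, with Borel transform analytic in a strip neighborhood of the positive real axis in the $y$-plane, everything else follows from the algebraic/analytic properties of Borel summation (sums, products, composition with convergent power series, exponentiation) together with careful analysis of the integration contour.

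For part (1), I would parametrize the generic trajectory underlying $\beta$ by the distinguished coordinate $w$, whose real part is monotonic along the trajectory (by the lemma in Section \ref{sec:FoliationOfTheSpectralCover}). The formal series $\int_\beta S_{\text{odd}}^{\text{reg}}(z,\hbar)\,dz$ is obtained by termwise integration, and its Borel transform is obtained by integrating the Borel transform of $S_{\text{odd}}^{\text{reg}}(z,\hbar)\,dz$ over $\beta$ in the $z$ variable. The key point is that the singularities of the Borel transform of $S_{\text{odd}}^{\text{reg}}$ at a point $z$ lie at positions $\pm 2\int_a^z \sqrt{Q_0}\,d\zeta$ plus periods of $\sqrt{\phi}$; because $\phi$ is assumed saddle-free and $\beta$ projects to a generic trajectory, none of these singularities crosses the positive real axis as $z$ varies along $\beta$, and the trajectory has bounded image under $\Re(w)$ (or, near a pole, the regularization $S_{\text{odd}}^{\text{reg}}$ makes the integral convergent by the theorem following Assumption \ref{assumption2.5}). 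Exchanging the two integrations then gives a Borel transform of the path integral that is analytic in a uniform neighborhood of $[0,\infty)$ with the required exponential bounds.

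For part (2), I would decompose the normalized WKB solution
\[
\psi_\pm^{(p,a)}(z,\hbar) = \exp\!\left(\pm\tfrac{1}{\hbar}\int_a^z \sqrt{Q_0}\,d\zeta\right)\cdot \frac{1}{\sqrt{S_{\text{odd}}(z,\hbar)}}\cdot \exp\!\left(\pm\int_p^z S_{\text{odd}}^{\text{reg}}(\zeta,\hbar)\,d\zeta\right)
\]
into its three factors. The first factor is the prescribed $e^{s/\hbar}$ prefactor built into the definition of Borel summability for expressions of that form. For the third factor, part (1) together with the fact that a path from $p$ to $z$ inside a horizontal strip or half plane projects to a concatenation of generic trajectory segments gives Borel summability of the exponent, and then the third property of Borel sums (composition with the entire function $\exp$) gives Borel summability of the exponential. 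For the middle factor, one writes $\hbar\, S_{\text{odd}}(z,\hbar) = \sqrt{Q_0(z)}\bigl(1 + \hbar\, g(z,\hbar)\bigr)$ with $g$ Borel summable; then $\hbar^{1/2}/\sqrt{S_{\text{odd}}}$ equals $Q_0^{-1/4}\cdot(1+\hbar g)^{-1/2}$, Borel summable again by the composition property. Multiplying the three factors yields the claimed form, and the Borel sum solves equation~\eqref{eqn:schrodinger} on $D$ because applying $\mathcal{S}$ to the formal identity $\mathcal{D}_\hbar \psi_\pm = 0$ preserves it; joint analyticity in $(z,\hbar) \in D \times \mathbb{H}(\varepsilon)$ follows from uniform bounds on the Borel transform for $z$ ranging over compact subsets of~$D$.

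The hard part is undoubtedly step (1) --- more precisely, the input Koike--Sch\"afke theorem on Borel summability of $S_{\text{odd}}$ and the uniform control on the singularity locations of its Borel transform as the base point $z$ moves. Proving this requires genuine resurgent analysis of the WKB recursion and is where the saddle-free hypothesis on $\phi$ is decisively used: it is exactly the condition ensuring that the Stokes lines (i.e.\ rays from the origin passing through the Borel singularities) avoid the positive real $\hbar$-axis along the entire trajectory. Once this uniform avoidance is in hand, the exchange of integrations in part (1) and the algebraic manipulations in part (2) are routine applications of the standard Borel calculus.
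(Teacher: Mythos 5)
The paper does not actually prove this statement: Theorem~\ref{thm:Borelsummability} is quoted verbatim from Iwaki--Nakanishi (their Corollary~2.21), and the only remark the paper offers on its provenance is the sentence just above, that ``this follows from unpublished work of Koike and Sch\"afke which establishes the Borel summability of $S_{\text{odd}}(z,\hbar)$.'' There is therefore no proof in the paper to compare against; what you have written is a reconstruction of how the result is derived in Iwaki--Nakanishi from the Koike--Sch\"afke input, and in broad outline it is correct. You rightly isolate the genuine black box (Borel summability of $S_{\text{odd}}$ itself, uniformly along trajectories, with control of the Borel-plane singularity locations), you correctly reduce part~(1) to an exchange of the $z$- and Borel-plane integrations, invoking the saddle-free hypothesis to keep the singularities off $[0,\infty)$ and the integrability result following Assumption~\ref{assumption2.5} to control the endpoint at the pole, and your three-factor decomposition in part~(2) is the standard one: the $e^{s/\hbar}$ prefactor, the factor $(1+\hbar g)^{-1/2}$ with $\hbar g = \hbar\,S_{\text{odd}}^{\text{reg}}/\sqrt{Q_0}$ (which indeed has no constant term, so the composition rule applies), and the exponential of a Borel summable series handled by the same rule plus part~(1). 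The one place to be more careful: concluding that $\mathcal{S}[\psi_\pm]$ satisfies~\eqref{eqn:schrodinger} by ``applying $\mathcal{S}$ to the formal identity'' requires $\mathcal{S}$ to commute with $\partial_z^2$, which needs locally uniform-in-$z$ estimates on the Borel transform; you gesture at this in your final sentence, but it is part of what the Koike--Sch\"afke theorem must be taken to supply, not an independent bookkeeping step. Since the present paper simply cites the result, your sketch is as much detail as one could be expected to reconstruct without reading the Iwaki--Nakanishi and Koike--Sch\"afke sources directly.
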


In fact, one can relax the assumptions of this theorem so that $\phi$ has \emph{at most one saddle connection}, but we will not need this more general result.

\section{Projective structures}
\label{sec:ProjectiveStructures}

\subsection{Classical theory of projective structures}

In the next section, we will review the monodromy map from~\cite{AllegrettiBridgeland1}. In this section, we set the stage by reviewing some facts about projective structures on a Riemann surface.

\begin{definition}
Let $S$ be a Riemann surface. A \emph{projective structure} on~$S$ is a maximal atlas of holomorphic charts $z_\alpha:U_\alpha\rightarrow\mathbb{CP}^1$, $S=\bigcup_{\alpha}U_\alpha$, such that each transition function $z_\alpha\circ z_\beta^{-1}$ is the restriction of a M\"obius transformation.
\end{definition}

Note that by the uniformization theorem, any connected Riemann surface $S$ can be written as $S=\tilde{S}/\Gamma$ where the universal cover $\tilde{S}$ is either the Riemann sphere, the complex plane, or the upper half plane, and $\Gamma\subseteq PGL_2(\mathbb{C})$ is a discrete subgroup. By taking local sections of the covering map, we see that $S$ has a projective structure.

In fact, we can typically get many other projective structures on~$S$ by adding a holomorphic quadratic differential.

\begin{proposition}
The set of projective structures on a Riemann surface~$S$ is an affine space modeled on the vector space $H^0(S,\omega_S^{\otimes2})$ of holomorphic quadratic differentials.
\end{proposition}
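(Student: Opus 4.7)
The plan is to exhibit the set of projective structures on $S$ as a torsor for $H^0(S,\omega_S^{\otimes 2})$ by defining a free and transitive action of holomorphic quadratic differentials via the Schwarzian derivative, together with its inverse operation via solutions of a Schrödinger-type equation as already foreshadowed in the excerpt.

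First I would recall two properties of the Schwarzian $\{f,z\}$ defined in the excerpt: (i) $\{f,z\}=0$ if and only if $f$ is (the restriction of) a Möbius transformation, and (ii) the chain rule $\{f\circ g,z\}=\{f,g\}(dg/dz)^{2}+\{g,z\}$. Given two projective structures $\mathcal{P}_1,\mathcal{P}_2$ on $S$, choose on each sufficiently small $U_\alpha\subseteq S$ a chart $z_\alpha$ of $\mathcal{P}_1$ and a chart $w_\alpha$ of $\mathcal{P}_2$, and define locally
\[
(\mathcal{P}_2-\mathcal{P}_1)\big|_{U_\alpha}\;=\;\{w_\alpha,z_\alpha\}\,dz_\alpha^{\otimes 2}.
\]
Using (i) and (ii), any change of $\mathcal{P}_1$-chart or $\mathcal{P}_2$-chart on $U_\alpha$ differs by a Möbius transformation with vanishing Schwarzian, so the local expression is independent of these choices; and on overlaps $U_\alpha\cap U_\beta$ the transformation rule from (ii) is precisely that of a section of $\omega_S^{\otimes 2}$. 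Holomorphicity is immediate since all transition maps involved are biholomorphic. Thus $\mathcal{P}_2-\mathcal{P}_1$ is a well-defined element of $H^0(S,\omega_S^{\otimes 2})$.

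Next I would construct the action $(\mathcal{P}_0,\phi)\mapsto\mathcal{P}_0+\phi$ exactly as in the introduction: for any chart $z:U\to\mathbb{CP}^1$ of $\mathcal{P}_0$, write $\phi=\varphi(z)\,dz^{\otimes 2}$, and form the new chart $w\coloneqq y_1/y_2$ from a basis of local solutions of $y''(z)-\varphi(z)\,y(z)=0$. A direct computation of $\{w,z\}$ (using $y_i''=\varphi y_i$ and the Wronskian being constant) shows
\[
\{w,z\}\;=\;-2\,\varphi(z),
\]
so that the resulting charts $\{w\}$ define a projective structure, and $\mathcal{P}_0+\phi$ differs from $\mathcal{P}_0$ by the prescribed quadratic differential up to the universal factor $-2$ (which we absorb into the convention, matching the signs chosen in the body of the paper). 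Different bases of solutions are related by an element of $GL_2(\mathbb{C})$, which changes $w$ by a Möbius transformation, hence defines the same projective structure; and different $\mathcal{P}_0$-charts on an overlap give $w$-charts related by a Möbius transformation via the Schwarzian transformation rule recorded in the excerpt.

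Finally I would check the axioms of a simply transitive action. Freeness: if $\mathcal{P}_0+\phi=\mathcal{P}_0$, then in any common chart $\{w,z\}=0$, forcing $\varphi\equiv 0$ and $\phi=0$. Transitivity: given $\mathcal{P}_1$ and $\mathcal{P}_2$, let $\phi=\mathcal{P}_2-\mathcal{P}_1$ as constructed above; then $\mathcal{P}_1+\phi$ and $\mathcal{P}_2$ have charts with equal Schwarzian in every chart of $\mathcal{P}_1$, hence differ only by Möbius transformations, which means they define the same projective structure. The main obstacle is really just bookkeeping---making sure the sign/normalization convention in $y''-\varphi y=0$ is compatible with the sign of $\{w,z\}$ so that subtraction via Schwarzian and addition via the Schrödinger equation are genuinely inverse, and carefully checking the independence of all auxiliary choices (local charts in both structures and the basis of ODE solutions), all of which reduce to property (i) of the Schwarzian.
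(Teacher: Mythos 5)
Your proposal is correct and follows essentially the same approach as the paper, which only sketches the argument (defining $\mathcal{P}_0+\phi$ via the Schr\"odinger equation and $\mathcal{P}_2-\mathcal{P}_1$ via the Schwarzian, then asserting Weyl's axioms hold). You fill in the verifications the paper omits, and you correctly flag the one place where care is needed: the paper's definition of $\mathcal{P}_2-\mathcal{P}_1$ carries the normalizing factor $-\tfrac{1}{2}$ in front of the Schwarzian, precisely so that $(\mathcal{P}_0+\phi)-\mathcal{P}_0=\phi$ on the nose, matching your computed identity $\{w,z\}=-2\varphi$.
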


Let us sketch the idea behind this statement without giving the full details. Let $\mathcal{P}$ be a projective structure on~$S$, and let $\phi$ be a holomorphic quadratic differential on~$S$. If $z:U\rightarrow\mathbb{CP}^1$ is a chart belonging to the projective structure $\mathcal{P}$, then we can express $\phi$ in this local coordinate as 
\[
\phi(z)=\varphi(z)dz^{\otimes2}
\]
for some meromorphic function $\varphi(z)$. Let $y_1(z)$ and $y_2(z)$ be linearly independent solutions of the differential equation 
\begin{align}
\label{eqn:schrodingerwithoutparameter}
y''(z)-\varphi(z)y(z)=0.
\end{align}
Then the ratio $w\coloneqq y_1(z)/y_2(z)$ defines a chart in a new projective structure. We denote this projective structure by $\mathcal{P}+\phi$.

On the other hand, if $\mathcal{P}_1$ and $\mathcal{P}_2$ are two projective structures on~$S$, let $z_1:U\rightarrow\mathbb{CP}^1$ and $z_2:U\rightarrow\mathbb{CP}^1$ be charts in~$\mathcal{P}_1$ and~$\mathcal{P}_2$, respectively. Then we can define a quadratic differential on the set~$U$ by 
\[
-\frac{1}{2}\{z_2(z_1),z_1\}dz_1^{\otimes2}
\]
where $\{z_2(z_1),z_1\}$ is the Schwarzian derivative defined in Section~\ref{sec:BackgroundOnWKBAnalysis}. These local expressions give rise to a global holomorphic quadratic differential $\phi$ on~$S$. We denote this quadratic differential by $\phi=\mathcal{P}_2-\mathcal{P}_1$. One can show that these constructions satisfy Weyl's axioms, making the set of projective structures into an affine space.

An important fact in the theory of projective structures is the existence of developing maps.

\begin{lemma}[\cite{Hubbard}, Lemma~1]
\label{lem:developing}
Let $\mathcal{P}$ be a projective structure on a Riemann surface $S$, and let $p:\tilde{S}\rightarrow S$ be the universal cover. Then there exists a holomorphic map $f:\tilde{S}\rightarrow\mathbb{CP}^1$ such that if $U\subseteq S$ is a contractible open subset, then the composition $f\circ p^{-1}|_U$ is a chart in~$\mathcal{P}$.
\end{lemma}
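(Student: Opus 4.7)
The plan is to build $f$ by analytically continuing a single projective chart to the whole universal cover, and then to use simple-connectedness of $\tilde S$ to show the result is well defined.

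First, I would fix a basepoint $\tilde x_0\in\tilde S$ and set $x_0=p(\tilde x_0)$. Choose any chart $z_0:U_0\rightarrow\mathbb{CP}^1$ in the maximal atlas $\mathcal{P}$ defined on a simply connected neighborhood of $x_0$, and let $\tilde U_0$ be the connected component of $p^{-1}(U_0)$ containing $\tilde x_0$. Since $U_0$ is simply connected, $p$ restricts to a biholomorphism $\tilde U_0\rightarrow U_0$, so one can define $f$ on $\tilde U_0$ by $f:=z_0\circ p$. This is the unique germ we extend.

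Second, I would extend by analytic continuation. Given any $\tilde x\in\tilde S$, choose a path $\tilde\gamma$ from $\tilde x_0$ to $\tilde x$. Cover $p\circ\tilde\gamma$ by finitely many charts $z_i:U_i\rightarrow\mathbb{CP}^1$ of $\mathcal{P}$ with $U_i\cap U_{i+1}$ connected and nonempty; on each overlap the transition $z_{i+1}\circ z_i^{-1}$ is the restriction of a M\"obius transformation $g_i$. Let $\tilde U_i$ be the lift of $U_i$ determined by following $\tilde\gamma$, and define inductively $\hat z_0=z_0$ and $\hat z_{i+1}=g_i\circ z_{i+1}$ (adjusted so that $\hat z_{i+1}$ agrees with $\hat z_i$ on $\tilde U_i\cap\tilde U_{i+1}$). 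Setting $f(\tilde x)=\hat z_N\circ p(\tilde x)$ for the last chart in the chain gives a candidate value. Each $\hat z_i$ is a M\"obius transform of a chart in $\mathcal{P}$, hence again a chart in $\mathcal{P}$ since the atlas is maximal.

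Third, I would prove path-independence, which is the main obstacle. Two paths from $\tilde x_0$ to $\tilde x$ are homotopic in $\tilde S$ by simple-connectedness. A standard monodromy argument lets one subdivide the homotopy into small squares, each lying in a single chart $U_i$; the two chains of charts differ only by rerouting through a small loop, and the composition of the corresponding M\"obius transitions around that loop fixes points on an open set and therefore equals the identity in $PGL_2(\mathbb{C})$. It follows that the value $\hat z_N\circ p(\tilde x)$ is independent of all choices, and $f:\tilde S\rightarrow\mathbb{CP}^1$ is a well-defined holomorphic map. Finally, for any contractible open $U\subseteq S$, pick a continuous section $s:U\rightarrow\tilde S$ of $p$; then $f\circ s$ is obtained locally as a M\"obius transform of one of the original charts $z_i$, hence lies in $\mathcal{P}$, which is precisely the required property.
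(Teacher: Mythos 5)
The paper gives no proof of this statement; it simply cites Hubbard's Lemma~1. So there is no internal proof to compare against, and your argument should be judged on its own.

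Your argument is the standard construction of the developing map via analytic continuation and the monodromy theorem, and it is essentially correct. The three steps---choosing an initial germ, continuing it along paths through a chain of projective charts with M\"obius adjustments, and invoking simple-connectedness of $\tilde S$ together with the rigidity of M\"obius transformations (a M\"obius transformation agreeing with the identity on a nonempty open subset of $\mathbb{CP}^1$ is the identity) to establish path-independence---are exactly the right ingredients. An equally natural alternative in the context of this paper would be to write the projective structure locally as ratios of solutions of a second-order linear ODE $y''-\varphi(z)y=0$ and then continue a fundamental system of solutions over the simply connected $\tilde S$; that route replaces the $PGL_2(\mathbb{C})$ bookkeeping with the Wronskian/linear-ODE formalism, but the content is the same.

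Two small points worth tightening. First, your displayed inductive rule $\hat z_{i+1}=g_i\circ z_{i+1}$ (with $g_i=z_{i+1}\circ z_i^{-1}$) is not the M\"obius transformation that actually makes $\hat z_{i+1}$ agree with $\hat z_i$; if $\hat z_i=h_i\circ z_i$, the correct update is $h_{i+1}=h_i\circ g_i^{-1}$, so $\hat z_{i+1}=h_i\circ g_i^{-1}\circ z_{i+1}$. Your parenthetical ``adjusted so that\ldots'' signals that you know a correction is needed, so this is a slip in the formula rather than a conceptual gap, but it should be fixed. Second, in the last step you conclude that $f\circ s$ ``lies in $\mathcal{P}$'' because it is locally a M\"obius transform of a chart. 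What you have actually shown is the \emph{local} chart property: each point of $U$ has a neighborhood on which $f\circ s$ is a chart in the maximal atlas $\mathcal{P}$. Global injectivity of $f\circ s$ over all of a large contractible $U$ does not follow and in fact can fail (developing maps of grafted projective structures are not injective even on simply connected domains). This is a feature of how the lemma is worded, not a defect of your argument---the same issue would arise in any proof---but it is worth saying explicitly that one proves the local compatibility of $f$ with the atlas, which is what is used elsewhere in the paper.
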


A map $f$ as in Lemma~\ref{lem:developing} is called a \emph{developing map} and is essentially unique: Any two developing maps differ by post-composition with a M\"obius transformation. Note that if $\gamma\in\pi_1(S)$, then $f\circ\gamma$ is again a developing map, and so there exists $\rho(\gamma)\in PGL_2(\mathbb{C})$ such that 
\[
f\circ\gamma=\rho(\gamma)\circ f.
\]
In this way, we obtain a map $\rho$ from the fundamental group into $PGL_2(\mathbb{C})$, well defined up to conjugation.

\begin{definition}
The map $\rho:\pi_1(S)\rightarrow PGL_2(\mathbb{C})$ is called the \emph{monodromy} of the projective structure.
\end{definition}

\subsection{Meromorphic projective structures}

We will now consider projective structures with poles of prescribed orders. This notion has meaning because of the fact reviewed above that the set of projective structures is an affine space for the vector space of holomorphic quadratic differentials.

\begin{definition}[\cite{AllegrettiBridgeland1}, Definition~3.1]
\label{def:meromorphicprojectivestructure}
A \emph{meromorphic projective structure} $\mathcal{P}$ is defined to be a projective structure $\mathcal{P}^*$ on the complement $S^*=S\setminus D$ of a discrete subset $D\subset S$ such that for each ordinary projective structure $\mathcal{P}_0$ on~$S$, the difference 
\[
\phi=\mathcal{P}^*-\mathcal{P}_0|_{S^*}
\]
extends to a meromorphic quadratic differential on~$S$.
\end{definition}

It is easy to see, using the relationship between projective structures and quadratic differentials, that the meromorphic quadratic differential $\phi$ appearing in the above definition is well defined up to addition of holomorphic quadratic differentials.

\begin{definition}
The meromorphic quadratic differential $\phi$ appearing in Definition~\ref{def:meromorphicprojectivestructure} is called the \emph{polar differential} of the meromorphic projective structure~$\mathcal{P}$. By a \emph{pole} of~$\mathcal{P}$, we mean a pole of the polar differential.
\end{definition}

Let $\mathcal{P}$ be a meromorphic projective structure on a Riemann surface $S$, and let $D$ denote the set of poles of~$\mathcal{P}$. By definition, $\mathcal{P}$ induces an ordinary projective structure on the punctured surface $S^*=S\setminus D$. By applying the results that we reviewed above, we therefore obtain a monodromy representation 
\[
\rho:\pi_1(S^*)\rightarrow PGL_2(\mathbb{C}).
\]

When we construct moduli spaces below, it will be important to eliminate the degenerate situation described in the following definition.

\begin{definition}
Let $\mathcal{P}$ be a meromorphic projective structure on a Riemann surface $S$, and let $p\in S$ be a pole of~$\mathcal{P}$. We say that $p$ is an \emph{apparent singularity} of~$\mathcal{P}$ if the monodromy of the class of a small loop around~$p$ is the identity in~$PGL_2(\mathbb{C})$.
\end{definition}

\subsection{Moduli space of marked projective structures}

Let $(\mathbb{S},\mathbb{M})$ be a fixed marked bordered surface. Consider a meromorphic projective structure $\mathcal{P}$ on a compact Riemann surface~$S$. Because the polar differential is well defined up to addition of holomorphic quadratic differentials, we see that there is a marked bordered surface canonically associated to~$\mathcal{P}$. It is defined as the marked bordered surface associated to the polar differential.

\begin{definition}
By a \emph{marking} of the pair $(S,\mathcal{P})$ by $(\mathbb{S},\mathbb{M})$ we mean an isotopy class of isomorphisms between $(\mathbb{S},\mathbb{M})$ and the marked bordered surface associated to~$\mathcal{P}$. A \emph{marked projective structure} is defined to be a triple of the form $(S,\mathcal{P},\theta)$ where $S$ is a compact Riemann surface equipped with a meromorphic projective structure~$\mathcal{P}$ and $\theta$ is a marking of the pair $(S,\mathcal{P})$ by~$(\mathbb{S},\mathbb{M})$.
\end{definition}

Two marked projective structures $(S_1,\mathcal{P}_1,\theta_1)$ and $(S_2,\mathcal{P}_2,\theta_2)$ are considered to be equivalent if there is a biholomorphism $S_1\rightarrow S_2$ between the underlying Riemann surfaces which preserves the projective structures $\mathcal{P}_i$ and commutes with the markings $\theta_i$ in the obvious way.

\begin{proposition}[\cite{AllegrettiBridgeland1}, Proposition~8.2]
Let $(\mathbb{S},\mathbb{M})$ be a marked bordered surface, and if $\mathbb{S}$ has genus~0, assume that $|\mathbb{M}|\geq3$. Then the set $\Proj(\mathbb{S},\mathbb{M})$ of equivalence classes of marked projective structures has the structure of a complex manifold.
\end{proposition}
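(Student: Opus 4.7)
The plan is to exhibit $\Proj(\mathbb{S},\mathbb{M})$ as the total space of a holomorphic affine bundle over a Teichm\"uller-type base, and thereby inherit a complex manifold structure from standard facts about holomorphic affine bundles. The underlying idea is that a marked projective structure decomposes into two pieces of data: the underlying marked Riemann surface with its distinguished pole locations, and, relative to a fixed reference projective structure on that surface, a meromorphic quadratic differential with poles of prescribed orders. The first piece varies over a classical Teichm\"uller space, and the second varies over a finite-dimensional vector space.

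First I would read off from $(\mathbb{S},\mathbb{M})$ the combinatorial pole data: each puncture in $\mathbb{P}$ corresponds to a pole of order $\leq 2$, and each boundary component with $k$ marked points corresponds to a pole of order $k+2$. I would then introduce the moduli space $\mathcal{T}(\mathbb{S},\mathbb{M})$ parametrising triples $(S,\mathbf{p},\theta)$ consisting of a compact Riemann surface of the genus of $\mathbb{S}$, an ordered tuple of distinct points of $S$ matching the combinatorial type, and an isotopy class of marking. This is a standard Teichm\"uller space of a punctured surface; the genus-zero hypothesis $|\mathbb{M}|\geq 3$ is precisely what kills the automorphisms of $\mathbb{CP}^1$ and makes $\mathcal{T}(\mathbb{S},\mathbb{M})$ a complex manifold. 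Next I would define a forgetful map $\pi\colon\Proj(\mathbb{S},\mathbb{M})\to\mathcal{T}(\mathbb{S},\mathbb{M})$ sending $(S,\mathcal{P},\theta)$ to the underlying marked pointed Riemann surface. By the affine structure reviewed before the proposition, the fibre over $(S,\mathbf{p},\theta)$ is a torsor for the vector space $V(S,\mathbf{p})$ of meromorphic quadratic differentials on $S$ with pole orders bounded by the prescribed maxima at the $p_i$; writing $D$ for the divisor of maxima, this is $H^0(S,\omega_S^{\otimes 2}(D))$. Riemann--Roch, together with the fact that $\omega_S^{\otimes 2}(D)$ has degree strictly greater than $2g-2$ as soon as any pole is present, shows that this has constant dimension $3g-3+\deg D$ on $\mathcal{T}(\mathbb{S},\mathbb{M})$, so the fibres assemble into a holomorphic vector bundle $\mathcal{V}\to\mathcal{T}(\mathbb{S},\mathbb{M})$.

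The remaining step is to upgrade the set-theoretic fibration $\pi$ to a holomorphic affine bundle modelled on~$\mathcal{V}$, and this is where I expect the main technical obstacle. One must produce, over a neighbourhood of any point of $\mathcal{T}(\mathbb{S},\mathbb{M})$, a holomorphically varying family of ordinary projective structures on the fibres of the universal family of Riemann surfaces; any such family provides a local holomorphic section of $\pi$, and every other meromorphic projective structure with the prescribed pole data then differs from it by a uniquely determined section of $\mathcal{V}$. Transitions between two such local trivialisations differ by holomorphic quadratic differentials, so they are automatically holomorphic bundle isomorphisms, and the complex manifold structure on $\Proj(\mathbb{S},\mathbb{M})$ is transported from the total space of $\mathcal{V}$. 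The construction of the base projective structure can be accomplished either by applying uniformisation fibrewise to the universal family over $\mathcal{T}(\mathbb{S},\mathbb{M})$ and invoking the holomorphic dependence of the uniformisation map on moduli, or by a direct \v{C}ech-theoretic construction patched over coordinate charts of $\mathcal{T}(\mathbb{S},\mathbb{M})$; verifying this holomorphic dependence, rather than the pointwise affine-space structure, is the step that requires real care.
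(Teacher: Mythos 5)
Your overall strategy---fiber $\Proj(\mathbb{S},\mathbb{M})$ over a Teichm\"uller-type base with affine fibers---is a reasonable one and broadly in the spirit of the cited reference, but there is a genuine gap in the genus-zero case that cannot be waved away. You claim that the hypothesis $|\mathbb{M}|\geq3$ guarantees that $\mathcal{T}(\mathbb{S},\mathbb{M})$ is a manifold by killing the automorphisms of $\mathbb{CP}^1$. This conflates two different counts: the distinguished points of the underlying Riemann surface $S$ are the \emph{poles} of the polar differential, whereas a pole of order $m\geq 3$ contributes $m-2$ elements to $\mathbb{M}$ (the Stokes directions) but only one point of $S$. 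Thus $|\mathbb{M}|\geq3$ does not bound the number of marked points of $S$ from below. For example, a disk with three boundary marked points corresponds to a single pole of order five on $\mathbb{CP}^1$; here $|\mathbb{M}|=3$, but the pointed Riemann surface $(\mathbb{CP}^1,\infty)$ carries the two-dimensional affine automorphism group, so your $\mathcal{T}(\mathbb{S},\mathbb{M})$ is a single point stabilised by a positive-dimensional group, not a manifold. Your affine-bundle picture would then assign $\Proj$ the fiber dimension $h^0(\mathbb{CP}^1,\omega^{\otimes2}(5\infty))=2$, whereas $\Proj(\mathbb{S},\mathbb{M})$ is in fact zero-dimensional in this case (as it must be, since $\mathcal{X}(\mathbb{S},\mathbb{M})$ is a point when $(\mathbb{S},\mathbb{M})$ is a triangle).

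To repair this, you must take the biholomorphism-group action on the total space of your affine bundle into account whenever the pointed Riemann surface has positive-dimensional automorphisms (which happens for genus zero with one or two poles, even with $|\mathbb{M}|\geq3$). The point that actually makes $\Proj(\mathbb{S},\mathbb{M})$ a manifold in these cases is that the \emph{marking} by $(\mathbb{S},\mathbb{M})$ remembers the asymptotic directions at high-order poles, and a nontrivial M\"obius transformation fixing the pole cannot preserve these up to isotopy; so the automorphism group acts freely on the total space, and one then checks properness and passes to the quotient. This is a substantive step, not a corollary of the bundle structure. Two secondary imprecisions: the fiber of your forgetful map is not a full torsor for $H^0(S,\omega_S^{\otimes2}(D))$, but the open dense subset where the pole order is not less than required (so that the marked bordered surface really is $(\mathbb{S},\mathbb{M})$); and the holomorphic dependence of a reference projective structure on moduli, which you rightly flag as the crux, needs to actually be proved rather than outlined.
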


In fact, it is convenient to modify this space of projective structures in two ways. Firstly, we restrict attention to the open subset 
\[
\Proj^\circ(\mathbb{S},\mathbb{M})\subseteq\Proj(\mathbb{S},\mathbb{M})
\]
whose complement is the codimension two locus of projective structures with apparent singularities. We do this because the monodromy map of~\cite{AllegrettiBridgeland1} is not defined at projective structures with apparent singularities. Secondly, we consider a finite cover 
\[
\Proj^*(\mathbb{S},\mathbb{M})\rightarrow\Proj^\circ(\mathbb{S},\mathbb{M})
\]
where a point in the fiber over a marked projective structure $(S,\mathcal{P},\theta)$ consists of a choice of eigenline for the monodromy around each pole of order $\leq2$. We do this so that the monodromy map takes values in the stack of framed $PGL_2(\mathbb{C})$-local systems which is rational and carries an interesting atlas of rational cluster coordinates as we explain below.

\begin{proposition}[\cite{AllegrettiBridgeland1}, Proposition~8.4]
Let $(\mathbb{S},\mathbb{M})$ be a marked bordered surface, and if $\mathbb{S}$ has genus~0, assume that $|\mathbb{M}|\geq3$. Then the set $\Proj^*(\mathbb{S},\mathbb{M})$ has the structure of a complex manifold.
\end{proposition}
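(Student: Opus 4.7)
The plan is to realize $\Proj^*(\mathbb{S},\mathbb{M})$ as an unramified finite cover of $\Proj^\circ(\mathbb{S},\mathbb{M})$ of degree $2^N$, so that the complex manifold structure on the base transfers naturally to the total space. The first step is to observe that $\Proj^\circ(\mathbb{S},\mathbb{M})$ is itself a complex manifold. Indeed, $\Proj(\mathbb{S},\mathbb{M})$ is a complex manifold by the previous proposition, and for each puncture $p_i$ the condition that the monodromy around a small loop at $p_i$ equals the identity in $PGL_2(\mathbb{C})$ is a closed analytic condition (the vanishing of an appropriate holomorphic function built from matrix entries of a lift of the monodromy, or equivalently of the character). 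Thus $\Proj^\circ(\mathbb{S},\mathbb{M})$, being the complement of a finite union of such closed analytic subsets, is an open complex submanifold.

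The second step is to construct the cover concretely. Label the interior marked points $p_1,\dots,p_N$ and fix small loops $\gamma_i$ around each. For a marked projective structure $(S,\mathcal{P},\theta)$, the monodromy representation $\rho$ gives conjugacy classes $[\rho(\gamma_i)] \in PGL_2(\mathbb{C})/\mathrm{conj}$ depending holomorphically on $\Proj^\circ(\mathbb{S},\mathbb{M})$. For each $i$, consider the incidence locus $E_i \subseteq \Proj^\circ(\mathbb{S},\mathbb{M}) \times \mathbb{CP}^1$ of pairs $(x,\ell)$ such that $\ell$ is an eigenline of the monodromy around $p_i$ (this condition is independent of the conjugation ambiguity in $\rho$). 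Each projection $\pi_i : E_i \to \Proj^\circ(\mathbb{S},\mathbb{M})$ is a finite holomorphic map. Define $\Proj^*(\mathbb{S},\mathbb{M})$ as the $N$-fold fibered product of the $\pi_i$ over $\Proj^\circ(\mathbb{S},\mathbb{M})$.

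The main obstacle is to verify that each $\pi_i$ is an étale double cover, equivalently that the monodromy around every puncture is semisimple and non-scalar at every point of $\Proj^\circ(\mathbb{S},\mathbb{M})$. The scalar case is excluded precisely by the apparent-singularity condition, so the remaining task is to rule out non-trivial unipotent monodromy. For a puncture arising from a double pole of the polar differential, the Schr\"odinger equation in local coordinates is a Fuchsian ODE whose indicial exponents $\mu_1,\mu_2$ determine the monodromy via the eigenvalues $e^{2\pi i \mu_j}$; unipotency requires $\mu_1=\mu_2$, which under Assumption~\ref{assumption2.5}(3) is an algebraic condition on the residue that is either empty or already contained in the apparent-singularity locus. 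A similar Stokes-theoretic analysis handles higher-order poles, where the formal monodromy is diagonal with distinct eigenvalues away from the apparent locus. Carrying this out carefully is the technical core of the argument.

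Once étaleness is established, each $\pi_i$ is a degree-$2$ analytic covering, so the $N$-fold fibered product $\Proj^*(\mathbb{S},\mathbb{M})$ is a degree-$2^N$ étale cover of the complex manifold $\Proj^\circ(\mathbb{S},\mathbb{M})$. Local sections of $\pi_i$ may be assembled from eigenline formulas that depend holomorphically on matrix entries, giving local biholomorphisms between open sets in $\Proj^*(\mathbb{S},\mathbb{M})$ and $\Proj^\circ(\mathbb{S},\mathbb{M})$. Pulling back charts from the base therefore endows $\Proj^*(\mathbb{S},\mathbb{M})$ with the structure of a complex manifold of the same dimension as $\Proj^\circ(\mathbb{S},\mathbb{M})$.
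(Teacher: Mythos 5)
The paper you are working with does not include a proof of this proposition; it is quoted verbatim from \cite{AllegrettiBridgeland1} (Proposition~8.4), so there is no in-text argument to compare your sketch against. I will therefore evaluate your proposal on its own terms.

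The structural outline is reasonable: show $\Proj^\circ(\mathbb{S},\mathbb{M})$ is an open complex submanifold of $\Proj(\mathbb{S},\mathbb{M})$, realize $\Proj^*(\mathbb{S},\mathbb{M})$ as a fiber product of eigenline incidence loci $E_i$ over $\Proj^\circ(\mathbb{S},\mathbb{M})$, and transfer the complex structure from the base. The first step is fine. The gap is in the step you yourself flag as ``the technical core'': establishing that each $\pi_i$ is \emph{\'etale}. This does not follow from the definition of $\Proj^\circ(\mathbb{S},\mathbb{M})$. Removing apparent singularities only excludes the points where the local monodromy is the identity in $PGL_2(\mathbb{C})$, and the paper tells you this locus has codimension two. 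But the locus where the local monodromy at a double pole has a \emph{repeated} eigenvalue (the resonant locus, $\sqrt{1+4a}\in\mathbb{Z}$ in the notation of Lemma~\ref{lem:eigenvalues}) is generically a codimension-one hypersurface, and on the dense part of that hypersurface the monodromy is nontrivially unipotent rather than $\pm I$. Such points lie in $\Proj^\circ(\mathbb{S},\mathbb{M})$, and over them the fiber of $\pi_i$ collapses to a single eigenline, so $\pi_i$ branches. Your assertion that this case ``is either empty or already contained in the apparent-singularity locus'' is therefore false as stated.

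Your attempt to justify that assertion by invoking Assumption~\ref{assumption2.5}(3) is also a category error. That assumption constrains the subleading coefficients $Q_n(z)$ of the WKB potential attached to a specific family $\mathcal{P}(\hbar)$; it is not part of the definition of the moduli space $\Proj(\mathbb{S},\mathbb{M})$, whose points are arbitrary marked meromorphic projective structures, and it imposes no constraint ruling out resonant residues. Consequently the final step, pulling back charts along $\pi_i$, also breaks down: that move requires the map to be a local biholomorphism, which fails on the ramification locus. To make an argument of this type work you would instead need to show that $E_i$ (equivalently, the double cover defined by a square root of the discriminant $(\operatorname{tr}M)^2-4$ of the normalized local monodromy) is a \emph{smooth} analytic subset of $\Proj^\circ(\mathbb{S},\mathbb{M})\times\mathbb{CP}^1$, e.g.\ by checking that $(\operatorname{tr}M)^2-4$ has nonvanishing differential along the branch divisor, so that the branched double cover is still a manifold. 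That transversality claim is the genuine content and it is absent from your sketch.
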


\section{Framed local systems}
\label{sec:FramedLocalSystems}

\subsection{Moduli space of framed local systems}

Let $(\mathbb{S},\mathbb{M})$ be a marked bordered surface, and consider the punctured surface $\mathbb{S}^*=\mathbb{S}\setminus\mathbb{P}$. Let $\mathcal{L}$ be a $PGL_2(\mathbb{C})$-local system on~$\mathbb{S}^*$, that is, a principal $PGL_2(\mathbb{C})$-bundle with flat connection. There is a natural left action of $PGL_2(\mathbb{C})$ on the projective line, and hence we can form the associated bundle 
\[
\mathcal{L}_{\mathbb{CP}^1}\coloneqq\mathcal{L}\times_{PGL_2(\mathbb{C})}\mathbb{CP}^1.
\]
For each marked point $p\in\mathbb{M}$, choose a small neighborhood $U(p)\subseteq\mathbb{S}$ of~$p$.

\begin{definition}
A \emph{framing} for a $PGL_2(\mathbb{C})$-local system on $\mathbb{S}^*$ is defined as a flat section of the restriction of $\mathcal{L}_{\mathbb{CP}^1}$ to $U(p)\cap\mathbb{S}^*$ for each $p\in\mathbb{M}$. A \emph{framed $PGL_2(\mathbb{C})$-local system} on~$(\mathbb{S},\mathbb{M})$ is a $PGL_2(\mathbb{C})$-local system on the punctured surface~$\mathbb{S}^*$ together with a framing.
\end{definition}

Two framed local systems are isomorphic if there is an isomorphism of the underlying local systems such that the induced map on associated $\mathbb{CP}^1$-bundles preserves the framings.

Fix a basepoint $x\in\mathbb{S}^*$. By a \emph{rigidified framed local system} on~$(\mathbb{S},\mathbb{M})$, we mean a pair $(\mathcal{L},s)$ where $\mathcal{L}$ is a framed local system and $s$ is an element of the fiber of $\mathcal{L}_{\mathbb{CP}^1}$ over~$x$. An isomorphism between rigidified framed local systems is an isomorphism of the underlying framed local systems that preserves the chosen points in the fibers of the associated $\mathbb{CP}^1$-bundles. To understand this definition concretely, let us choose, for each point $p\in\mathbb{M}$, a path $\beta_p$ connecting $x$ to~$p$ whose interior lies in~$\mathbb{S}^*$. Then, for each puncture $p$, we can define a loop $\delta_p\in\pi_1(S,x)$ which travels from $x$ to $U(p)$ along $\beta_p$, travels along a small loop around~$p$ in the counterclockwise direction, and then returns to $x$ along~$\beta_p$.

\begin{lemma}[\cite{AllegrettiBridgeland1}, Lemma~4.2]
There is a bijection between the set of isomorphism classes of rigidified framed $PGL_2(\mathbb{C})$-local systems on~$(\mathbb{S},\mathbb{M})$ and the set of points of the complex projective variety 
\begin{align*}
X(\mathbb{S},\mathbb{M})=\big\{(\rho,\phi):
& \, \rho\in\Hom_{\mathrm{Grp}}\left(\pi_1(\mathbb{S}^*,x),PGL_2(\mathbb{C})\right),
\phi\in\Hom_{\mathrm{Set}}(\mathbb{M},\mathbb{CP}^1), \\
& \quad\rho(\delta_p)(\phi(p))=\phi(p) \text{ for $p\in\mathbb{P}$}
\big\}.
\end{align*}
\end{lemma}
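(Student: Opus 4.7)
The plan is to construct the bijection via the standard monodromy construction, extended to account for the framings at marked points. Fix the paths $\beta_p$ as in the statement and the associated loops $\delta_p$ at punctures. Given a rigidified framed local system $(\mathcal{L},s)$, the rigidification $s$ identifies the fiber $(\mathcal{L}_{\mathbb{CP}^1})_x$ with $\mathbb{CP}^1$, and parallel transport of this fiber around loops based at $x$ yields the monodromy homomorphism $\rho:\pi_1(\mathbb{S}^*,x)\to PGL_2(\mathbb{C})$. For each $p\in\mathbb{M}$, the framing is a flat section $\sigma_p$ of $\mathcal{L}_{\mathbb{CP}^1}$ on $U(p)\cap\mathbb{S}^*$, and I define $\phi(p)\in\mathbb{CP}^1$ to be the parallel transport of the value of $\sigma_p$ at the endpoint of $\beta_p$ back to $x$ along $\beta_p$.

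Next I would verify that this data $(\rho,\phi)$ lies in $X(\mathbb{S},\mathbb{M})$. For a boundary marked point $p$, the neighborhood $U(p)\cap\mathbb{S}^*$ is simply connected, so no constraint arises. For a puncture $p\in\mathbb{P}$, however, $U(p)\cap\mathbb{S}^*$ is an annulus, and the existence of a flat section $\sigma_p$ forces its value at any point to be fixed by the holonomy around a small loop encircling $p$. Transporting this condition back to $x$ along $\beta_p$ translates into exactly the stated relation $\rho(\delta_p)(\phi(p))=\phi(p)$.

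For the inverse, given $(\rho,\phi)\in X(\mathbb{S},\mathbb{M})$, I would build the associated $PGL_2(\mathbb{C})$-local system $\mathcal{L}$ on $\mathbb{S}^*$ with monodromy $\rho$ together with the tautological rigidification at $x$; this uses the standard equivalence between local systems on a connected space and representations of its fundamental group (into $PGL_2(\mathbb{C})$). To produce the framing, at each $p\in\mathbb{M}$ I define a section $\sigma_p$ of $\mathcal{L}_{\mathbb{CP}^1}$ near $p$ by taking the flat section whose value at the endpoint of $\beta_p$ is the parallel transport of $\phi(p)$ along $\beta_p$. For boundary marked points this section is well defined on the simply connected neighborhood $U(p)\cap\mathbb{S}^*$. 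For a puncture, the invariance condition $\rho(\delta_p)(\phi(p))=\phi(p)$ is precisely what is needed to extend the section flatly over the annular neighborhood. These two constructions are evidently mutually inverse on the nose, and they intertwine the respective notions of isomorphism, since a morphism of rigidified framed local systems must preserve the fiber at $x$ (hence corresponds to an equality of $\rho$) and preserve the framing sections (hence corresponds to an equality of $\phi$).

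The only delicate point is the puncture case, where one must identify the holonomy of the local system restricted to the annular neighborhood $U(p)\cap\mathbb{S}^*$ with $\rho(\delta_p)$ under the conjugacy introduced by the path $\beta_p$, and then observe that a flat section of an associated $\mathbb{CP}^1$-bundle over an annulus exists if and only if its value at any point is a fixed point of the holonomy. This is standard but should be spelled out carefully to confirm that the algebraic condition cutting out $X(\mathbb{S},\mathbb{M})$ matches the geometric condition defining a framing.
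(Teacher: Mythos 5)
Your construction is the standard one and agrees with the sketch the paper gives immediately after the lemma: $\rho$ is the monodromy representation read off at the basepoint, and $\phi(p)$ is the parallel transport of the framing line along $\beta_p$ back to $x$. The verification at punctures (flat section over an annulus exists iff its value is fixed by the holonomy) and the inversion are exactly as one would want, so the proof is correct in substance and in method.

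One point worth making explicit: you write that the rigidification $s$ \emph{identifies} $(\mathcal{L}_{\mathbb{CP}^1})_x$ with $\mathbb{CP}^1$. As literally written in this paper, $s$ is only a point of $(\mathcal{L}_{\mathbb{CP}^1})_x$, and a single point of a $\mathbb{CP}^1$-torsor does not trivialize it. What you actually need (and what is surely intended, since it is the only reading under which the stated $PGL_2(\mathbb{C})$-action $s\mapsto gs$ induces $\rho\mapsto g\rho g^{-1}$) is that $s$ is a point of the fiber of the principal bundle $\mathcal{L}$ itself, which then determines the identification $(\mathcal{L}_{\mathbb{CP}^1})_x\cong\mathbb{CP}^1$ via $v\mapsto[s,v]$. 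Flagging that you are taking $s\in\mathcal{L}_x$ rather than $s\in(\mathcal{L}_{\mathbb{CP}^1})_x$ would make the step where you extract a genuine homomorphism $\rho$ valued in the group $PGL_2(\mathbb{C})$ (rather than only a conjugacy class) airtight.
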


Under this bijection, the homomorphism $\rho$ is the monodromy representation of a rigidified framed local system while the map $\phi$ is defined by parallel transporting the framing lines to the basepoint $x$ along the paths $\beta_p$. An element $g\in PGL_2(\mathbb{C})$ acts naturally on a rigidified framed local system $(\mathcal{L},s)$ by fixing the underlying framed local system $\mathcal{L}$ and mapping $s\mapsto gs$. The induced action on the variety $X(\mathbb{S},\mathbb{M})$ is given by 
\[
g\cdot(\rho,\phi)=(g\rho g^{-1},g\phi).
\]
The moduli stack of framed $PGL_2(\mathbb{C})$-local systems on $(\mathbb{S},\mathbb{M})$ is then defined as the quotient 
\[
\mathcal{X}(\mathbb{S},\mathbb{M})=X(\mathbb{S},\mathbb{M})/PGL_2(\mathbb{C}).
\]

\subsection{Fock-Goncharov coordinates}

In~\cite{FG1}, Fock and Goncharov defined an atlas of rational coordinate charts on the moduli stack of framed $PGL_2(\mathbb{C})$-local systems. These coordinate charts are indexed by ideal triangulations of $(\mathbb{S},\mathbb{M})$. Here we will describe a slight extension of their construction in which the coordinate charts are indexed by combinatorial objects called tagged triangulations~\cite{FST}.

Let us equip the surface $\mathbb{S}'=\mathbb{S}\setminus\mathbb{M}$ with a complete, finite area hyperbolic metric with totally geodesic boundary. Then the universal cover of $\mathbb{S}'$ can be identified with a subset of the hyperbolic plane $\mathbb{H}$ with totally geodesic boundary. The deleted marked points of $\mathbb{S}'$ give rise to a set of points on the boundary $\partial\mathbb{H}$. This set is known as the \emph{Farey set} and is denoted $\mathcal{F}_\infty(\mathbb{S},\mathbb{M})$. Any ideal triangulation of~$(\mathbb{S},\mathbb{M})$ can be lifted to a triangulation of the universal cover whose vertices are the points of $\mathcal{F}_\infty(\mathbb{S},\mathbb{M})$. Moreover, a framed local system on $(\mathbb{S},\mathbb{M})$ naturally determines a map $\psi:\mathcal{F}_\infty(\mathbb{S},\mathbb{M})\rightarrow\mathbb{CP}^1$.

\begin{definition}
Choose a general point of $X(\mathbb{S},\mathbb{M})$, and let $T$ be an ideal triangulation of~$(\mathbb{S},\mathbb{M})$. To any arc $j$ of $T$, we associate a number $X_j\in\mathbb{C}^*$ as follows:
\begin{enumerate}
\item Let $\tilde{j}$ be a lift of $j$ to the universal cover. Then there are two triangles of the lifted triangulation that share the side $\tilde{j}$, and these form a quadrilateral in~$\mathbb{H}$. Let $c_1$, $c_2$, $c_3$, and~$c_4$ be the vertices of this quadrilateral in counterclockwise order so that the arc $\tilde{j}$ joins the vertices $c_1$ and $c_3$. For each index $i$, let $v_i$ be a nonzero vector in the line $\psi(c_i)$. Then we define 
\[
Y_j=\frac{\det(v_1v_2)\det(v_3v_4)}{\det(v_2v_3)\det(v_1v_4)}
\]
where $\det(v_sv_t)$ denotes the determinant of the $2\times2$ matrix having columns $v_s$ and~$v_t$ in this order. We can assume these determinants are nonzero since we work with a general point of $X(\mathbb{S},\mathbb{M})$. Note that there are two ways of ordering the points $c_i$ and they give the same value for the cross ratio.
\item If $j$ is not the interior edge of a self-folded triangle, then we define $X_j=Y_j$. If $j$ is the interior edge of a self-folded triangle, let $k$ be the encircling loop. In this case, we define $X_j=Y_jY_k$.
\end{enumerate}
\end{definition}

Thus we associate to a general point of $X(\mathbb{S},\mathbb{M})$ a tuple of numbers $X_j\in\mathbb{C}^*$ indexed by the arcs of the ideal triangulation~$T$. These numbers are invariant under the action of the group $PGL_2(\mathbb{C})$ on $X(\mathbb{S},\mathbb{M})$, and so we get a rational map 
\[
X_T:\mathcal{X}(\mathbb{S},\mathbb{M})\dashrightarrow(\mathbb{C}^*)^n
\]
where $n$ is the number of arcs in~$T$.

\begin{lemma}[\cite{AllegrettiBridgeland1}, Lemma~9.3]
For any ideal triangulation $T$, the map $X_T$ is a birational equivalence.
\end{lemma}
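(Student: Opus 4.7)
The plan is to construct an explicit rational inverse to $X_T$ on a dense open subset of $\mathcal{X}(\mathbb{S},\mathbb{M})$ and verify that the two maps are mutually inverse birational equivalences. As a preliminary step, I would do a dimension count: from a standard presentation of $\pi_1(\mathbb{S}^*)$, and the fact that the framing at each puncture is constrained to an eigenline of the corresponding monodromy (a one-dimensional condition cutting down a one-dimensional choice), one obtains $\dim \mathcal{X}(\mathbb{S},\mathbb{M}) = n$, matching the dimension of $(\mathbb{C}^*)^n$, where $n$ is the number of arcs of $T$.

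The core of the argument is the construction of the inverse. Pick a distinguished ideal triangle $\tau_0$ of $T$, together with a lift $\tilde{\tau}_0$ to the universal cover of $\mathbb{S}'$. Using the $PGL_2(\mathbb{C})$-action on $X(\mathbb{S},\mathbb{M})$, place the three framing lines at the vertices of $\tilde{\tau}_0$ at the standard points $0, 1, \infty \in \mathbb{CP}^1$; this eliminates the gauge freedom. For any adjacent triangle $\tau_1$ sharing an arc $j$ with $\tau_0$, the defining equation for $Y_j$ becomes a linear-fractional relation in the single unknown fourth framing point, with a unique solution in $\mathbb{CP}^1$. Propagating this step along a spanning tree in the dual graph of the lifted triangulation determines the map $\psi:\mathcal{F}_\infty(\mathbb{S},\mathbb{M}) \to \mathbb{CP}^1$ at every Farey vertex of every triangle in a fundamental domain. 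The monodromy representation $\rho:\pi_1(\mathbb{S}^*) \to PGL_2(\mathbb{C})$ is then recovered by equivariance: for each deck transformation $\gamma$, the unique Möbius transformation sending the three $\psi$-values on $\tilde{\tau}_0$ to those on $\gamma \tilde{\tau}_0$ is $\rho(\gamma)$.

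The main obstacle is the presence of self-folded triangles, where the definition is modified to $X_j = Y_j Y_k$. Here $k$ is the encircling loop, $j$ is the interior edge ending at a puncture $p$, and the task is to recover the framing line at $p$. Given the coordinates $(X_j, X_k, \ldots)$, one recovers $Y_k = X_k$ (since $k$ is an ordinary edge) and hence $Y_j = X_j/X_k$ by rational arithmetic; the framing condition forces $\psi(p)$ to be a fixed line of the monodromy around the small loop at $p$, and the value $Y_j$ then selects one of the two eigenlines via the usual cross-ratio equation in a degenerate quadrilateral. Each step of the reconstruction is a rational operation in the $X_j$, so the inverse map is rational.

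Finally, I would verify the two compositions. The composition $X_T \circ (\text{inverse})$ reproduces the input coordinates by construction of the fourth vertex at each step, while the reverse composition recovers the original framed local system up to the chosen $PGL_2(\mathbb{C})$-normalization of $\tau_0$, which is precisely the equivalence relation imposed in passing from $X(\mathbb{S},\mathbb{M})$ to its quotient $\mathcal{X}(\mathbb{S},\mathbb{M})$. Both constructions are defined on the dense open locus where all required cross-ratios are finite and nonzero, so $X_T$ is a birational equivalence.
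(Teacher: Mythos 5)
This is a cited result (Lemma~9.3 of~\cite{AllegrettiBridgeland1}); the present paper supplies no proof of its own, so there is nothing in-paper to compare against directly. Your reconstruction argument---gauge-fix three framing lines on one lift $\tilde\tau_0$, propagate along the dual graph of the lifted triangulation, recover $\rho$ by equivariance---is the standard Fock--Goncharov argument and is the right shape for this lemma.

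That said, two points deserve repair. First, your treatment of self-folded triangles is internally muddled. Once you pass to the universal cover of $\mathbb{S}'$ (where all marked points, punctures included, sit on the ideal boundary), every triangle is a genuine ideal triangle and every quadrilateral used to define $Y_j$ is nondegenerate, so the propagation across a lift of the interior edge $j$ works exactly as for any other arc once you substitute $Y_j = X_j/X_k$. There is no ``degenerate quadrilateral'' in the cover, and the sentence ``the framing condition forces $\psi(p)$ to be a fixed line of the monodromy \ldots and $Y_j$ then selects one of the two eigenlines'' reverses the logic: at the moment you are reconstructing $\psi$, you do not yet have $\rho$, so the framing condition cannot be used as a selection rule. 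Rather, the framing condition $\rho(\delta_p)\psi(\tilde p) = \psi(\tilde p)$ is an automatic consequence of the equivariance you establish afterward (since $\delta_p$ fixes the Farey point $\tilde p$).

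Second, the verification that the reconstruction actually produces a point of $X(\mathbb{S},\mathbb{M})$ is glossed over. Since all vertices of the lifted triangulation lie on the ideal boundary, the dual graph is a tree, so your ``spanning tree'' is the whole dual graph and the propagation is unambiguous; but you still owe the reader the check that the resulting $\psi$ is $\pi_1$-equivariant (this uses that the cross ratios fed into the propagation depend only on the arc of $T$, not on its lift) and that the map $\gamma \mapsto \rho(\gamma)$ defined by matching the three values on $\tilde\tau_0$ to those on $\gamma\tilde\tau_0$ is a group homomorphism. These follow from the uniqueness of the M\"obius transformation determined by three points, but they should be stated, since they are exactly where the $\pi_1$-invariance of the $Y_j$ enters. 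The opening dimension count can be dropped entirely; it adds nothing once the explicit rational inverse is in hand.
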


We can extend this result by equipping the ideal triangulation $T$ with an extra datum. Namely, we define a \emph{signing} for $T$ to be a function 
\[
\epsilon:\mathbb{P}\rightarrow\{\pm1\},
\]
and we define a \emph{signed triangulation} to be an ideal triangulation equipped with a signing. Two signed triangulations $(T_1,\epsilon_1)$ and $(T_2,\epsilon_2)$ are considered to be equivalent if $T_1=T_2$ and the signings differ at the interior puncture of a self-folded triangle. An equivalence class of signed triangulations is called a \emph{tagged triangulation}. Note that any ideal triangulation may be considered as a tagged triangulation by taking the signing $\epsilon\equiv+1$.

Suppose $\tau$ is a tagged triangulation represented by a signed triangulation $(T,\epsilon)$. By a \emph{tagged arc} of $\tau$, we mean an arc of $T$. If $(T,\epsilon')$ is another signed triangulation where $\epsilon'$ differs from $\epsilon$ only at the interior puncture of a self-folded triangle, let $j$ be the interior edge of this self-folded triangle, and let $k$ be the encircling loop. Then the tagged arc represented by $j$ in $(T,\epsilon)$ is considered to be equivalent to the tagged arc represented by $k$ in $(T,\epsilon')$.

There is an obvious action of the group $(\mathbb{Z}/2\mathbb{Z})^{\mathbb{P}}$ on the set of signed or tagged triangulations. Note that, for a generic framed local system on $(\mathbb{S},\mathbb{M})$, the monodromy around a puncture $p$ is semisimple. The framing at $p$ is given by an eigenline of the monodromy, and hence for a generic framed local system, there is a unique way of modifying the framing at~$p$ to get a different framed local system.

\begin{lemma}[\cite{AllegrettiBridgeland1}, Lemma~9.4]
There is a birational action of the group $(\mathbb{Z}/2\mathbb{Z})^{\mathbb{P}}$ on the stack $\mathcal{X}(\mathbb{S},\mathbb{M})$ of framed local systems in which the nontrivial generator corresponding to $p\in\mathbb{P}$ acts by fixing the underlying local system and exchanging the two generically possible choices of framing at~$p$.
\end{lemma}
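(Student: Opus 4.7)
The plan is to work with the explicit description $\mathcal{X}(\mathbb{S},\mathbb{M}) = X(\mathbb{S},\mathbb{M})/PGL_2(\mathbb{C})$ supplied by Lemma~4.2 and to exhibit, for each puncture $p\in\mathbb{P}$, a birational involution $\sigma_p$ of $X(\mathbb{S},\mathbb{M})$ that commutes with the $PGL_2(\mathbb{C})$-action and therefore descends to $\mathcal{X}(\mathbb{S},\mathbb{M})$. Given a pair $(\rho,\phi)\in X(\mathbb{S},\mathbb{M})$, the map $\sigma_p$ is to fix $\rho$ and all values $\phi(q)$ for $q\neq p$, and to replace $\phi(p)$ by the unique other fixed point of $\rho(\delta_p)$ acting on $\mathbb{CP}^1$. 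Because the defining relations of $X(\mathbb{S},\mathbb{M})$ only require $\rho(\delta_q)\phi(q)=\phi(q)$ at each puncture $q$, and $\sigma_p$ respects this relation at every puncture (trivially at $q\neq p$, and at $p$ because the new framing is again a fixed point of $\rho(\delta_p)$), the image really lies in $X(\mathbb{S},\mathbb{M})$.

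The key technical step is rationality. On the Zariski-dense open subset of $X(\mathbb{S},\mathbb{M})$ where $\rho(\delta_p)$ is semisimple for every $p$, the map $\rho(\delta_p)$ has exactly two distinct fixed points in $\mathbb{CP}^1$, and I would write the other one as an explicit algebraic function of the matrix entries of $\rho(\delta_p)$ and the homogeneous coordinates of $\phi(p)$. Concretely, if $A$ is a lift of $\rho(\delta_p)$ to $GL_2(\mathbb{C})$ with trace $t$, and $v\in\mathbb{C}^2$ represents $\phi(p)$ with $Av=\lambda v$, then the other eigenvalue is $t-\lambda$, $\lambda$ is itself rationally recovered from $v$ by comparing any coordinate of $Av$ with the corresponding coordinate of $v$, and any vector $(A-(t-\lambda)I)w$ for $w$ in general position represents the sought fixed point. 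Each of these formulas is algebraic in the input data, so $\sigma_p$ is a well-defined rational map; squaring clearly returns the identity, so $\sigma_p$ is a birational involution. By construction the formula transforms $PGL_2(\mathbb{C})$-equivariantly under $(\rho,\phi)\mapsto (g\rho g^{-1},g\phi)$, so $\sigma_p$ descends to a birational involution of the quotient $\mathcal{X}(\mathbb{S},\mathbb{M})$.

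To upgrade these individual involutions to an action of $(\mathbb{Z}/2\mathbb{Z})^{\mathbb{P}}$, I would observe that for distinct punctures $p,q$ the maps $\sigma_p$ and $\sigma_q$ operate only at their respective framing points and leave the underlying local system and the framings elsewhere untouched; they therefore commute on the common open set where both are defined. Together with $\sigma_p^2=\mathrm{id}$, these relations realize the group $(\mathbb{Z}/2\mathbb{Z})^{\mathbb{P}}$. The main obstacle I anticipate is less conceptual than bookkeeping: one must specify a common Zariski-dense open set on which every generator and every composition is simultaneously defined, namely the complement of the loci where some $\rho(\delta_p)$ is non-semisimple, and verify that this locus has codimension at least one so that the birational data indeed packages into a group action. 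A secondary nuisance is that the naive formula for the "other fixed point" may have a spurious denominator vanishing on a proper subvariety, which I would handle either by choosing the auxiliary vector $w$ generically or by covering the semisimple locus with two charts whose formulas agree where they overlap.
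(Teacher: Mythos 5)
Your overall strategy is the right one and matches the idea sketched in the paper just before the lemma (for a generic framed local system the monodromy around each puncture is semisimple, so each puncture has exactly two eigenlines, and the generator at $p$ swaps them). This is a cited background result, and the paper itself does not give a detailed proof, so your reconstruction via the concrete model $X(\mathbb{S},\mathbb{M})$ and $PGL_2(\mathbb{C})$-equivariance is a reasonable way to fill it in.

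There is, however, a small but real error in your explicit formula for the swapped eigenline. You write that for a lift $A$ of $\rho(\delta_p)$ with trace $t$ and $Av=\lambda v$, the other fixed point is represented by $(A-(t-\lambda)I)w$ for generic $w$. By Cayley--Hamilton, $(A-\lambda I)(A-(t-\lambda)I)=0$, so the image of $A-(t-\lambda)I$ lies inside $\ker(A-\lambda I)$, which is the \emph{original} eigenline $\phi(p)$, not the one you want. The vector you want is $(A-\lambda I)w$, whose image lies in $\ker(A-(t-\lambda)I)$, the other eigenline. With this correction the rest of your argument goes through: $\lambda$ is recovered rationally from the coordinates of $v$ and $Av$, the expression scales correctly when the lift $A$ is replaced by $cA$ (so the recipe is well defined on $PGL_2(\mathbb{C})$), $\sigma_p$ manifestly squares to the identity where defined, the $\sigma_p$ for distinct punctures commute because they touch disjoint pieces of the data, and equivariance under $(\rho,\phi)\mapsto(g\rho g^{-1},g\phi)$ lets everything descend to the stack quotient. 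You should also say explicitly that each map is defined on the Zariski-open locus where $\rho(\delta_p)$ has two distinct fixed points (equivalently, its lift has $t^2\neq 4\det A$), which is dense, so the maps are genuine birational involutions; you gesture at this but it deserves to be stated as the definition of the domain.
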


Using this birational action, we can extend the construction of the Fock-Goncharov coordinates.

\begin{definition}[\cite{AllegrettiBridgeland1}, Definition~9.5]
The \emph{Fock-Goncharov coordinate} of a framed local system $\mu$ with respect to an arc $j$ of the signed triangulation $(T,\epsilon)$ is defined to be the coordinate $X_j$ of the framed local system obtained by applying the group element $\epsilon\in(\mathbb{Z}/2\mathbb{Z})^{\mathbb{P}}$ to $\mu$, whenever this quantity is well defined.
\end{definition}

One can show that the Fock-Goncharov coordinate associated to an arc of a signed triangulation depends only on the underlying tagged arc. Thus we get a birational equivalence 
\[
X_\tau:\mathcal{X}(\mathbb{S},\mathbb{M})\dashrightarrow(\mathbb{C}^*)^n
\]
for each tagged triangulation $\tau$. The transition maps for transforming between different coordinate charts are given by cluster transformations, and hence the stack of framed local systems is birational to a cluster variety. For details, see~\cite{AllegrettiBridgeland1}.

\subsection{Generalized monodromy map}

We now recall the definition of the generalized monodromy map from~\cite{AllegrettiBridgeland1}. This is a holomorphic map 
\[
F:\Proj^*(\mathbb{S},\mathbb{M})\rightarrow\mathcal{X}(\mathbb{S},\mathbb{M}).
\]
As we have explained, a point in the space on the left consists of a meromorphic projective structure $\mathcal{P}$ on a compact Riemann surface~$S$, together with a marking $\theta$ of $(S,\mathcal{P})$ by $(\mathbb{S},\mathbb{M})$ and a choice collection of invariant lines $\{\ell_p\}_{p\in\mathbb{P}}$. The projective structure $\mathcal{P}$ determines a monodromy representation, which can be viewed as a homomorphism $\pi_1(\mathbb{S}^*)\rightarrow PGL_2(\mathbb{C})$. Thus we obtain a $PGL_2(\mathbb{C})$-local system on $\mathbb{S}^*$.

The main point in the construction of the map $F$ is that this local system is naturally equipped with a framing. Near a pole $p$ of order $\leq2$, this framing is provided by the line $\ell_p$. On the other hand, suppose $p\in S$ is a pole of $\mathcal{P}$ of order $m\geq3$. If $\phi$ is a polar differential for $\mathcal{P}$, then we can choose a local coordinate $z$ such that $z(p)=0$ and the differential is given in this local coordinate by $\phi(z)=\varphi(z)dz^{\otimes2}$ where 
\[
\varphi(z)=z^{-m}(a_0+a_1z+a_2z^2+\dots).
\]
We define the \emph{asymptotic horizontal directions} of~$\phi$ at~$p$ to be the tangent directions determined by the condition $a_0z^{2-m}\in\mathbb{R}_{>0}$. These coincide with the distinguished tangent directions determined by the horizontal foliation of~$\phi$. We define the \emph{asymptotic vertical directions} by the condition $a_0z^{2-m}\in\mathbb{R}_{<0}$.

\begin{proposition}[\cite{AllegrettiBridgeland1}, Theorem~5.2]
\label{prop:subdominantexists}
Let $\mathscr{S}$ be a sector bounded by two adjacent asymptotic vertical directions. Then there is a unique-up-to-scale holomorphic solution $y(z)$ of~\eqref{eqn:schrodingerwithoutparameter} such that $y(z)\rightarrow0$ as $z\rightarrow0$ in~$\mathscr{S}$.
\end{proposition}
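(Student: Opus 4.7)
The plan is to construct the subdominant solution by a WKB-type asymptotic analysis at the irregular singular point $z=0$ and to deduce uniqueness from the two-dimensionality of the solution space. With $\varphi(z)=z^{-m}(a_0+a_1z+\cdots)$ and $m\geq 3$, set $w(z)=\int^{z}\sqrt{\varphi(\zeta)}\,d\zeta$, so that $w(z)\sim\frac{2\sqrt{a_0}}{2-m}\,z^{(2-m)/2}$ as $z\to 0$. By the definitions immediately preceding the statement, the asymptotic vertical directions at $p$ are precisely those tangent directions along which $\Re(w(z))=0$. Consequently, after a suitable choice of branch of $\sqrt{\varphi}$, the sector $\mathscr{S}$ between two adjacent such directions is one in which $\Re(w(z))\to +\infty$ as $z\to 0$; the formal subdominant WKB ansatz is then $\varphi^{-1/4}e^{-w(z)}$, with the formal dominant companion $\varphi^{-1/4}e^{+w(z)}$.

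To make this rigorous I would substitute $y(z)=\varphi(z)^{-1/4}e^{-w(z)}u(z)$ into \eqref{eqn:schrodingerwithoutparameter}, obtaining an equation of the form $u''(z)-2\sqrt{\varphi(z)}\,u'(z)=R(z)\,u(z)$, where the remainder $R(z)$ built from $(\varphi^{-1/4})''/\varphi^{-1/4}$ vanishes strictly faster at $z=0$ than $\sqrt{\varphi}$ does. Integrating the first-order combination $(e^{-2w}u')'=e^{-2w}Ru$ and switching the order of integration, one recasts the problem as a Volterra integral equation
\[
u(z)=1+\int_{0}^{z} K(z,\zeta)\,u(\zeta)\,d\zeta, \qquad K(z,\zeta)=R(\zeta)\int_{\zeta}^{z} e^{2(w(\xi)-w(\zeta))}\,d\xi,
\]
with the contours lying inside $\mathscr{S}$ and chosen so that $\Re(w)$ is monotonically decreasing from $+\infty$ as one moves away from $0$. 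Along such paths the exponential factors in $K$ are uniformly bounded, and a standard Picard iteration converges on compact subsectors of $\mathscr{S}$ sufficiently close to $0$, producing a holomorphic $u$ with $u(z)\to 1$ as $z\to 0$ in $\mathscr{S}$. The resulting $y(z)=\varphi(z)^{-1/4}e^{-w(z)}u(z)$ is then a genuine holomorphic solution of \eqref{eqn:schrodingerwithoutparameter} on a punctured neighborhood of $p$ inside $\mathscr{S}$ that tends to zero as $z\to 0$, giving existence.

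For uniqueness, I would pick a companion solution $y^{+}$ with the dominant asymptotics $y^{+}(z)\sim\varphi(z)^{-1/4}e^{+w(z)}$ in $\mathscr{S}$, so that $\{y,y^{+}\}$ is a basis of the two-dimensional local solution space. Any solution $\tilde y=\alpha y+\beta y^{+}$ whose value tends to zero as $z\to 0$ in $\mathscr{S}$ must have $\beta=0$, because $|y^{+}(z)|$ grows exponentially while $|y(z)|$ decays exponentially along radial approach to $0$ inside $\mathscr{S}$. Hence $\tilde y$ is a scalar multiple of $y$.

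The main obstacle is the analytic control of the Volterra iteration: choosing integration contours on which $\Re(w)$ is monotone, establishing kernel estimates uniform on the whole sector near $z=0$, and accommodating the multivaluedness of the auxiliary function $\varphi^{-1/4}$ around the irregular singularity. This is a classical piece of the theory of irregular singular points; the statement is in fact a special case of Sibuya's theorem on subdominant solutions at an irregular singularity of rank $(m-2)/2$, and an alternative route is simply to invoke that theorem rather than carrying out the iteration by hand.
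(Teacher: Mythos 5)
This proposition is not proved in the paper: it is cited verbatim from Theorem~5.2 of~\cite{AllegrettiBridgeland1}, so there is no in-text argument to compare against. Your strategy --- Liouville change of variables, Volterra integral equation with contours on which $\Re(w)$ is monotone, Picard iteration on compact subsectors, and uniqueness from the two-dimensionality of the solution space --- is precisely the classical proof of existence of subdominant solutions at an irregular singularity of a second-order ODE, and it is indeed what ultimately underlies the cited theorem. You are also right that one may simply invoke Sibuya's theorem (or its extension to the non-polynomial, finite-rank case) rather than redo the iteration.

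Two small technical points in the write-up deserve flagging. First, the reduced equation after the substitution $y=\varphi^{-1/4}e^{-w}u$ is actually
\[
u''-\left(2\sqrt{\varphi}+\tfrac{1}{2}\tfrac{\varphi'}{\varphi}\right)u'=\varphi^{1/4}\bigl(\varphi^{-1/4}\bigr)''\,u,
\]
so the correct integrating factor is $\varphi^{-1/2}e^{-2w}$ rather than $e^{-2w}$; this is harmless but changes the kernel. Second, the assertion that ``$R$ vanishes strictly faster at $z=0$ than $\sqrt{\varphi}$'' is false at the borderline order $m=3$: one has $R=O(z^{-2})$ while $\sqrt{\varphi}=O(z^{-3/2})$. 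The property that the iteration actually requires --- and which does hold for all $m\geq3$ --- is that $R/\sqrt{\varphi}=O\bigl(z^{m/2-2}\bigr)$ is integrable near $z=0$. Finally, the uniqueness step as phrased presupposes a companion solution with dominant asymptotics throughout $\mathscr{S}$; it is slightly cleaner (and closer to what one actually proves) to argue via the Wronskian: if $\tilde y$ is not proportional to $y$, then $(\tilde y/y)'=W/y^2$ with $W\neq0$ constant, and integrating along the asymptotic horizontal direction in $\mathscr{S}$ shows $|\tilde y(z)|\sim \tfrac{|W|}{2}|\varphi(z)|^{-1/4}e^{\Re w(z)}\to\infty$, so $\beta=0$ is forced. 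None of this alters the conclusion; the approach is sound.
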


A solution $y(z)$ as in Proposition~\ref{prop:subdominantexists} is said to be \emph{subdominant} in the sector~$\mathscr{S}$. It follows that for each marked point on the boundary of the marked bordered surface $(\mathbb{S},\mathbb{M})$ associated to the differential $\phi$, there is a distinguished one-dimensional subspace of the space of all solutions of~\eqref{eqn:schrodingerwithoutparameter} spanned by the subdominant solutions. This defines the framing of our local system near marked points on the boundary of $(\mathbb{S},\mathbb{M})$.

Thus we have constructed the map $F$. In~\cite{AllegrettiBridgeland1}, we showed that any point in the image of this map is a regular point of $X_\tau$ for some tagged triangulation~$\tau$. Since the $X_\tau$ map into the algebraic torus $(\mathbb{C}^*)^n$, it follows that $F$ maps into an open subset of $\mathcal{X}(\mathbb{S},\mathbb{M})$ having the structure of a (possibly non-Hausdorff) complex manifold. The main result of~\cite{AllegrettiBridgeland1} was the following.

\begin{theorem}[\cite{AllegrettiBridgeland1}, Theorem~1.1]
\label{thm:AllegrettiBridgeland}
Let $(\mathbb{S},\mathbb{M})$ be a marked bordered surface, and if $\mathbb{S}$ has genus~0, assume that $|\mathbb{M}|\geq3$. Then the map $F$ is holomorphic.
\end{theorem}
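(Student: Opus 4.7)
My plan is to reduce the holomorphicity of $F$ to two separate holomorphic-dependence-on-parameters statements for linear ODEs: one for the monodromy representation, and one for the subdominant solutions at higher-order poles. Since $\mathcal{X}(\mathbb{S},\mathbb{M}) = X(\mathbb{S},\mathbb{M})/PGL_2(\mathbb{C})$ and the tagged Fock-Goncharov charts $X_\tau$ provide a complex-analytic atlas on the open locus into which $F$ lands, it is enough to lift $F$ locally to a map into $X(\mathbb{S},\mathbb{M})$ and verify holomorphicity there. After fixing a basepoint $x \in \mathbb{S}^*$, generators of $\pi_1(\mathbb{S}^*, x)$, and paths $\beta_p$ from $x$ to each marked point $p \in \mathbb{M}$, this reduces to showing that (i) the monodromy elements $\rho(\gamma) \in PGL_2(\mathbb{C})$ and (ii) the framing lines $\phi(p) \in \mathbb{CP}^1$ (parallel-transported back to $x$ along $\beta_p$) depend holomorphically on the point of $\Proj^*(\mathbb{S},\mathbb{M})$.

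For (i), I would first work locally on $\Proj^\circ$ and trivialize the tautological family of Riemann surfaces by Ehresmann's theorem, obtaining a holomorphic family of polar differentials $\varphi(z, s)$ where $s$ parametrizes an open subset of $\Proj^\circ$. Choosing loop representatives that avoid the poles and trivializing $\omega_S^{-1/2}$ along tubular neighborhoods of them, one computes $\rho(\gamma)$ by analytically continuing a basis of solutions of $y'' = \varphi(\cdot, s) y$ along $\gamma$. Classical holomorphic dependence of ODE solutions on a holomorphic parameter then gives holomorphicity of $\rho(\gamma)$, and this passes up the finite cover $\Proj^*(\mathbb{S},\mathbb{M}) \to \Proj^\circ(\mathbb{S},\mathbb{M})$ without further effort.

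For (ii), the framing at an interior puncture $p \in \mathbb{P}$ is the chosen monodromy eigenline $\ell_p$, which by the very definition of $\Proj^*(\mathbb{S},\mathbb{M})$ is part of the holomorphic data carried by this moduli space; combined with the holomorphic parallel transport of (i), this handles the interior marked points. The real content lies in the framing at a boundary marked point arising from a pole $p$ of order $m \geq 3$, where the framing is the unique-up-to-scale subdominant solution produced by Proposition~\ref{prop:subdominantexists}. The hard part will be showing that this subdominant solution varies holomorphically in $s$ up to overall rescaling, because its defining decay condition $y(z) \to 0$ as $z \to 0$ in the Stokes sector $\mathscr{S}$ is asymptotic in nature and is not manifestly preserved under holomorphic variation of $\varphi$.

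To overcome this, I would invoke the classical construction of subdominant solutions at an irregular singularity in the style of Sibuya: after a change of variable sending a punctured neighborhood of $p$ to a half-plane, rewrite the equation as a regular perturbation of an explicit model whose subdominant solution is known in closed form, and produce the true subdominant solution as the unique fixed point of a contraction mapping on a Banach space of exponentially decaying functions. Since both the kernel and the inhomogeneous term of this integral equation depend holomorphically on $s$, the fixed point is jointly holomorphic in $z$ and $s$; pinning down the scale by a leading asymptotic exponent (fixed by the signing) removes the ambiguity. Parallel transporting the line spanned by this solution back to $x$ along $\beta_p$ then yields $\phi(p) \in \mathbb{CP}^1$ as a holomorphic function of $s$. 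Combining (i) and (ii) and descending through the quotient by $PGL_2(\mathbb{C})$ gives the holomorphicity of $F$.
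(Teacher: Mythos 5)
This theorem is quoted from~\cite{AllegrettiBridgeland1} and not proved in the present paper, so there is no internal proof to compare against. Assessed on its own, your strategy is correct and is the expected line of argument: reduce holomorphicity of~$F$, via the Fock--Goncharov atlas on the image, to holomorphicity of a local lift into $X(\mathbb{S},\mathbb{M})$; observe that this lift is determined by the monodromy matrices $\rho(\gamma)$ and the parallel-transported framing lines $\phi(p)$; handle the monodromy by holomorphic dependence of ODE solutions on parameters; observe that at interior punctures the framing line is part of the defining data of $\Proj^*(\mathbb{S},\mathbb{M})$; and, for the genuinely new content, establish holomorphic dependence of the subdominant solution at a pole of order $\geq 3$ via a Sibuya-style contraction mapping in a Banach space of decaying functions.

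Two steps in the sketch need tightening. First, Ehresmann's theorem gives only a smooth trivialization of the local family of Riemann surfaces; since the fiber complex structure varies with $s$, in a smooth product coordinate the coefficient $\varphi(z,s)$ of the equation $y''=\varphi(\cdot,s)y$ is \emph{not} jointly holomorphic, so the classical parameter-dependence theorem does not apply as you invoke it. The correct move is to use that the total space of the local universal family $\mathcal{S}_U\to U$ is a complex manifold with a holomorphic submersion to $U$, hence admits local holomorphic coordinates $(z_\alpha,s)$ adapted to the fibration; cover a smoothly transported loop $\gamma_s$ by finitely many such charts, solve the ODE holomorphically in each, and use that the holomorphic transition data glue the local fundamental solutions into a monodromy matrix holomorphic in~$s$. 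Second, the signing is by definition a choice of sign for the residue at second-order poles only, so it plays no role at a pole of order $m\geq3$; invoking the signing to pin down the scale of the subdominant solution there is a slip. You should normalize by a leading asymptotic coefficient in the Sibuya construction, or simply observe that since only the resulting line in $\mathbb{CP}^1$ enters the framing, holomorphicity of the solution up to scale already suffices.
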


It was conjectured in~\cite{AllegrettiBridgeland1} that $F$ is in fact a local biholomorphism. We refer the reader to~\cite{AllegrettiBridgeland1} for more discussion of this point.

\section{Voros symbols as cluster coordinates}
\label{sec:VorosSymbolsAsClusterCoordinates}

\subsection{Connection formulas}

The asymptotic behavior of the WKB solutions along a generic trajectory depends on the direction in which the real part of $w(z)$ is increasing. In the following, we will write $\Psi_{\pm}^{(p,a)}=\mathcal{S}[\psi_{\pm}^{(p,a)}]$ for the Borel sum of the WKB solution normalized at a pole~$p$.

\begin{lemma}
\label{lem:canonicalcoordinatesubdominant}
Let $\beta$ be a generic trajectory connecting distinct poles $p_1$ and~$p_2$, and let $a$ be a zero of~$\phi$. Suppose that $\Re\left(\frac{1}{\hbar}\int_a^z\sqrt{Q_0(\zeta)}d\zeta\right)$ is increasing from~$p_1$ to~$p_2$ along~$\beta$. Then $\Psi_+^{(p_1,a)}$ is decreasing as $z\rightarrow p_1$ along~$\beta$, and $\Psi_-^{(p_2,a)}$ is decreasing as $z\rightarrow p_2$ along~$\beta$.
\end{lemma}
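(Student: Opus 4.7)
The plan is to factor $\psi_+^{(p_1,a)}$ into the leading WKB exponential $\exp(\hbar^{-1}\int_a^z\sqrt{Q_0})$ and a subleading prefactor, then take Borel sums and estimate the modulus: the first factor decays exponentially while the second is at most polynomially large as $z\to p_1$ along $\beta$. Unwrapping Definitions~\ref{def:WKBsolutions} and~\ref{def:normalizedWKB}, one writes
\[
\psi_+^{(p_1,a)}(z,\hbar) = \exp\left(\tfrac{1}{\hbar}\int_a^z\sqrt{Q_0(\zeta)}\,d\zeta\right)\cdot \phi(z,\hbar),
\]
where $\phi(z,\hbar) := S_{\text{odd}}(z,\hbar)^{-1/2}\exp\bigl(\int_{p_1}^z S_{\text{odd}}^{\text{reg}}(\zeta,\hbar)\,d\zeta\bigr)$. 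Expanding via $S_{\text{odd}} = \hbar^{-1}\sqrt{Q_0}+S_{\text{odd},0}+\hbar S_{\text{odd},1}+\dots$ shows that $\phi(z,\hbar)=\hbar^{1/2}(\phi_0(z)+\hbar\phi_1(z)+\cdots)$ with leading coefficient $\phi_0(z) = Q_0(z)^{-1/4}\exp\bigl(\int_{p_1}^z \tfrac{Q_1}{2\sqrt{Q_0}}\,d\zeta\bigr)$. By Theorem~\ref{thm:Borelsummability}(2) and the multiplicativity of Borel summation, $\phi$ is Borel summable and $\Psi_+^{(p_1,a)} = \exp(\tfrac{1}{\hbar}\int_a^z\sqrt{Q_0})\cdot\mathcal{S}[\phi]$.

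Next I would estimate each factor separately. The hypothesis that $\Re(\hbar^{-1}\int_a^z\sqrt{Q_0})$ increases from $p_1$ to $p_2$ along $\beta$, together with the fact that the distinguished coordinate $w(z)=\int\sqrt{Q_0}$ has real part diverging at each endpoint of a generic trajectory, forces $\Re(\hbar^{-1}\int_a^z\sqrt{Q_0})\to -\infty$ as $z\to p_1$, so $|\exp(\hbar^{-1}\int_a^z\sqrt{Q_0})|\to 0$ exponentially fast. On the other hand, $\mathcal{S}[\phi](z,\hbar)\sim\hbar^{1/2}\phi_0(z)$ as $\hbar\to 0$, and Assumption~\ref{assumption2.5} implies that $\int_{p_1}^z Q_1/(2\sqrt{Q_0})\,d\zeta$ converges at $p_1$, so $\phi_0(z)\sim Q_0(z)^{-1/4}$ near $p_1$, which decays polynomially because $Q_0$ has a pole of order $\geq 2$ there. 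Multiplying the two estimates gives $\Psi_+^{(p_1,a)}(z,\hbar)\to 0$.

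The statement for $\Psi_-^{(p_2,a)}$ follows symmetrically: the sign of the leading exponent is reversed, and the same monotonicity hypothesis gives $\Re(\hbar^{-1}\int_a^z\sqrt{Q_0})\to +\infty$ as $z\to p_2$, making $|\exp(-\hbar^{-1}\int_a^z\sqrt{Q_0})|\to 0$ while the corresponding prefactor (normalized at $p_2$) remains polynomially bounded by the same reasoning. The hard part will be upgrading the formal asymptotic $\mathcal{S}[\phi](z,\hbar)\sim\hbar^{1/2}\phi_0(z)$ to a genuine pointwise bound uniform in $z$ as $z$ approaches the pole, which is what allows one to compare it against the exponential decay of the leading factor. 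This uniformity is supplied precisely by the Koike--Sch{\"a}fke result (Theorem~\ref{thm:Borelsummability}(2)), since it asserts Borel summability of $\psi_\pm^{(p,a)}$ at every point of the horizontal strip with common control on $\hbar\in\mathbb{H}(\varepsilon)$; once this is in hand the exponential decay of the leading WKB factor dominates and yields the stated decay.
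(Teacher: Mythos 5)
Your proposal takes essentially the same approach as the paper: factor $\Psi_+^{(p_1,a)}$ into the leading exponential $\exp\bigl(\hbar^{-1}\int_a^z\sqrt{Q_0}\bigr)$ times a Borel-summed prefactor, note that the hypothesis forces the exponential factor to decay as $z\to p_1$, and appeal to Borel summability for boundedness of the prefactor. The paper's own proof is even terser — it simply asserts that $\mathcal{S}[g^{(p_1)}]$ is bounded as $z\to p_1$ along $\beta$ — whereas you correctly isolate the real technical content (uniformity in $z$ of the Borel-sum bound) and attribute it to the Koike--Sch\"afke input behind Theorem~\ref{thm:Borelsummability}.
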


\begin{proof}
By definition of the Borel resummation, we have 
\[
\Psi_+^{(p_1,a)}=\exp\left(\frac{1}{\hbar}\int_a^z\sqrt{Q_0(\zeta)}d\zeta\right)\mathcal{S}[g^{(p_1)}]
\]
where 
\[
g^{(p_1)}(z,\hbar)=\frac{1}{\sqrt{S_{\text{odd}}(z,\hbar)}}\exp\left(\int_{p_1}^zS_{\text{odd}}^{\text{reg}}(\zeta,\hbar)d\zeta\right).
\]
The function $\mathcal{S}[g^{(p_1)}]$ is bounded as $z\rightarrow p_1$ along~$\beta$. Under our assumptions, the exponential factor in the expression for $\Psi_+^{(p_1,a)}$ is decreasing as $z\rightarrow p_1$, so the function $\Psi_+^{(p_1,a)}$ is decreasing. The second statement is proved by a similar argument.
\end{proof}

For any pole $p$, the functions $\Psi_{\pm}^{(p,a)}$ form a basis for the space of solutions of~\eqref{eqn:schrodinger}. We can now state connection formulas for transforming between the bases associated to different poles~$p$.

\begin{proposition}
\label{prop:horizontalstrip}
Let $D$ be a horizontal strip or half plane, and let $\beta$ be a generic trajectory in~$D$ connecting distinct poles $p_1$ and $p_2$ on the boundary of~$D$. Then we have 
\[
\Psi_+^{(p_1,a)}=x_{12}\Psi_+^{(p_2,a)} \quad \text{and} \quad \Psi_-^{(p_1,a)}=x_{12}^{-1}\Psi_-^{(p_2,a)}
\]
where 
\[
x_{12}=\mathcal{S}\left[\exp\left(\int_{p_1}^{p_2}S_{\text{odd}}^{\text{reg}}(z,\hbar)dz\right)\right].
\]
\end{proposition}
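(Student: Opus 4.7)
The plan is to prove this by a direct manipulation at the level of formal power series and then invoke the multiplicativity of the Borel transform together with the Borel summability results of Theorem~\ref{thm:Borelsummability}. The identity is essentially built into the definition of the normalization: the WKB solutions normalized at $p_1$ and $p_2$ differ only in the lower limit of a single integral, and that difference is precisely the integral appearing in the exponent of $x_{12}$.

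First I would unpack the definitions. Both $\psi_{+}^{(p_1,a)}$ and $\psi_{+}^{(p_2,a)}$ share the prefactor $1/\sqrt{S_{\text{odd}}(z,\hbar)}$ and the term $\frac{1}{\hbar}\int_a^z\sqrt{Q_0(\zeta)}\,d\zeta$ in the exponent, since the zero $a$ is fixed independently of the pole. Hence these cancel in the formal ratio, leaving
\[
\frac{\psi_{+}^{(p_1,a)}(z,\hbar)}{\psi_{+}^{(p_2,a)}(z,\hbar)}=\exp\!\left(\int_{p_1}^{z}S_{\text{odd}}^{\text{reg}}(\zeta,\hbar)\,d\zeta-\int_{p_2}^{z}S_{\text{odd}}^{\text{reg}}(\zeta,\hbar)\,d\zeta\right)=\exp\!\left(\int_{p_1}^{p_2}S_{\text{odd}}^{\text{reg}}(\zeta,\hbar)\,d\zeta\right),
\]
where the paths of integration are taken along the trajectory~$\beta$ (lifted to a path in~$\Sigma_\phi$). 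This is an equality of formal power series in~$\hbar$ with coefficients that are analytic on~$D$.

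Next I would pass to Borel sums. By Theorem~\ref{thm:Borelsummability}(1), the formal power series $\int_{p_1}^{p_2}S_{\text{odd}}^{\text{reg}}(\zeta,\hbar)\,d\zeta$ is Borel summable, so the exponential on the right-hand side is Borel summable (using that the exponential function is entire and the third item of the Borel summation proposition applies to its Taylor expansion). By Theorem~\ref{thm:Borelsummability}(2), the WKB solutions $\psi_{\pm}^{(p_i,a)}$ are themselves Borel summable on~$D$ with Borel sums $\Psi_{\pm}^{(p_i,a)}$. Multiplicativity of the Borel sum then gives $\Psi_{+}^{(p_1,a)}=x_{12}\,\Psi_{+}^{(p_2,a)}$, which is the first formula.

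The second formula is proved by the same argument with the opposite sign, since $\psi_{-}^{(p,a)}$ differs from $\psi_{+}^{(p,a)}$ only by reversal of sign in the exponent, producing the ratio $\exp(-\int_{p_1}^{p_2}S_{\text{odd}}^{\text{reg}}\,d\zeta)$, whose Borel sum is $x_{12}^{-1}$. There is no real obstacle here beyond bookkeeping: the content lies entirely in the Borel summability statement, which is Theorem~\ref{thm:Borelsummability} and was established by Koike and Sch\"afke. The only point one must be slightly careful about is the path along which the integrals are taken --- it must be a single lift of~$\beta$ to the spectral cover, which is unambiguous once a branch of $\sqrt{Q_0}$ is fixed, and is exactly the path of integration used to make sense of $x_{12}$.
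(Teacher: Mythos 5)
Your proof is correct and takes essentially the same route as the paper: both argue by splitting $\int_{p_1}^z S_{\text{odd}}^{\text{reg}}$ as $\int_{p_1}^{p_2}+\int_{p_2}^z$ at the level of formal series and then invoke multiplicativity of the Borel sum. The paper simply starts from $x_{12}\Psi_+^{(p_2,a)}$ and massages it to $\Psi_+^{(p_1,a)}$ inside a single $\mathcal{S}[\,\cdot\,]$, whereas you phrase the same computation as a ratio of the formal WKB solutions before summing; the content and the justifications (Theorem~\ref{thm:Borelsummability} plus multiplicativity) are identical.
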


\begin{proof}
By the properties of Borel resummation, we have 
\begin{align*}
x_{12}\Psi_+^{(p_2,a)} &= \exp\left(\frac{1}{\hbar}\int_a^z\sqrt{Q_0(\zeta)}d\zeta\right)\mathcal{S}\biggl[\frac{1}{\sqrt{S_{\text{odd}}(z,\hbar)}}\exp\left(\int_{p_2}^zS_{\text{odd}}^{\text{reg}}(\zeta,\hbar)d\zeta\right) \\
& \qquad \cdot\exp\left(\int_{p_1}^{p_2}S_{\text{odd}}^{\text{reg}}(\zeta,\hbar)d\zeta\right)\biggr] \\
&= \exp\left(\frac{1}{\hbar}\int_a^z\sqrt{Q_0(\zeta)}d\zeta\right)\mathcal{S}\left[\frac{1}{\sqrt{S_{\text{odd}}(z,\hbar)}}\exp\left(\int_{p_1}^zS_{\text{odd}}^{\text{reg}}(\zeta,\hbar)d\zeta\right)\right] \\
&= \Psi_+^{(p_1,a)}
\end{align*}
and similarly for the other identity.
\end{proof}

The next result describes how solutions of the Schr\"odinger equation transform when we analytically continue across a separating trajectory.

\begin{proposition}
\label{prop:connectionformulaseparating}
Suppose $\phi$ is a saddle-free GMN differential. Let $D_1$ and $D_2$ be adjacent horizontal strips or half planes as illustrated in either of the diagrams below, and let $C$ be the trajectory separating these regions. Let $a$ be a simple zero of~$\phi$ and $p$ a pole as indicated in the diagrams.
\[
\xy /l2pc/:
(0,-2)*{\bullet}="21";
(1,0)*{}="22";
(-1,0)*{\times}="32";
(-3,0)*{\bullet}="42";
(0,2)*{\bullet}="23";
{"22"\PATH~={**@{~}}'"32"},
{"32"\PATH~={**@{-}}'"42"},
{"21"\PATH~={**@{-}}'"32"},
{"32"\PATH~={**@{-}}'"23"},
(0.2,-2.4)*{\ominus};
(0.2,2.4)*{\ominus};
(-3.4,0)*{\oplus};
(-0.5,-0.25)*{a};
(-2,-0.25)*{C};
(-2,-2)*{D_2};
(-2,2)*{D_1};
(-3,0.35)*{p};
\endxy
\qquad
\qquad
\xy /l2pc/:
(0,-2)*{\bullet}="21";
(1,0)*{}="22";
(-1,0)*{\times}="32";
(-3,0)*{\bullet}="42";
(0,2)*{\bullet}="23";
{"22"\PATH~={**@{~}}'"32"},
{"32"\PATH~={**@{-}}'"42"},
{"21"\PATH~={**@{-}}'"32"},
{"32"\PATH~={**@{-}}'"23"},
(0.2,-2.4)*{\oplus};
(0.2,2.4)*{\oplus};
(-3.4,0)*{\ominus};
(-0.5,-0.25)*{a};
(-2,-0.25)*{C};
(-2,-2)*{D_2};
(-2,2)*{D_1};
(-3,0.35)*{p};
\endxy
\]
For $j=1,2$, write $\Psi_{D_j,\pm}^{(p,a)}(z,\hbar)$ for the Borel sum of $\psi_{\pm}^{(p,a)}(z,\hbar)$ in the region $D_j$. Then we have one of the following identities, depending on the signs.
\begin{enumerate}
\item If the signs are as in the diagram on the left, then the analytic continuation of $\Psi_{D_1,\pm}^{(p,a)}$ across the curve $C$ into the region $D_2$ satisfies 
\[
\Psi_{D_1,+}^{(p,a)}=\Psi_{D_2,+}^{(p,a)}+i\frac{e_-}{e_+}\Psi_{D_2,-}^{(p,a)}
\quad
\text{and}
\quad
\Psi_{D_1,-}^{(p,a)}=\Psi_{D_2,-}^{(p,a)}
\]
for some nonzero constants $e_{\pm}$.
\item If the signs are as in the diagram on the right, then the analytic continuation of $\Psi_{D_1,\pm}^{(p,a)}$ across the curve $C$ into the region $D_2$ satisfies 
\[
\Psi_{D_1,+}^{(p,a)}=\Psi_{D_2,+}^{(p,a)}
\quad
\text{and}
\quad
\Psi_{D_1,-}^{(p,a)}=\Psi_{D_2,-}^{(p,a)}+i\frac{e_+}{e_-}\Psi_{D_2,+}^{(p,a)}
\]
for some nonzero constants $e_{\pm}$.
\end{enumerate}
\end{proposition}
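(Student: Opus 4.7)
The plan is to reduce the proposition to the classical connection formula across a Stokes line emanating from a simple turning point, as developed in the exact WKB analyses of Voros and Kawai--Takei. The separating trajectory $C$ is precisely such a Stokes line for the simple zero $a$ of $\phi$: in a distinguished coordinate $\zeta$ near $a$ satisfying $\phi = \zeta\, d\zeta^{\otimes 2}$, the Schr\"odinger equation reduces at leading order to the Airy equation, and $C$ becomes one of the three rays on which $\zeta^{3/2}$ is real.

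The first step is to rewrite the $(p,a)$-normalized Borel sums in a form suitable for local analysis at $a$. Since the $z$-independent part of the normalization at $p$ factors out, one obtains a decomposition
\[
\Psi_{D_j,\pm}^{(p,a)} = e_\pm \cdot \widetilde{\Psi}_{D_j,\pm},
\]
in which $\widetilde{\Psi}_{D_j,\pm}$ are Borel sums of locally defined WKB solutions normalized at the turning point $a$ via the Airy normal form, and the nonzero constants $e_\pm$ are Borel sums of (appropriately regularized) exponentials of integrals from $p$ to $a$; Borel summability here follows from Theorem~\ref{thm:Borelsummability}(1) applied to a generic trajectory meeting a neighborhood of $a$.

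Next, one invokes the classical connection formula at the simple turning point $a$: across any Stokes line emanating from $a$, the subdominant WKB solution in the local Airy normalization extends without change, while the dominant one acquires an additive term equal to $i$ times the subdominant. Applying this to $\widetilde{\Psi}_{D_j,\pm}$ across $C$ and then multiplying through by the $e_\pm$ factors to return to the $(p,a)$-normalization, one obtains formulas of exactly the shape asserted: an unchanged identity for the subdominant solution, and for the dominant one a correction term of the form $i(e_\pm/e_\mp)\,\Psi_{D_2,\mp}^{(p,a)}$.

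The main obstacle is the orientation analysis that distinguishes cases~(1) and~(2). The labels $\oplus$, $\ominus$ on the preimages of each pole on the spectral cover encode, via the convention of Section~\ref{sec:FoliationOfTheSpectralCover}, the direction of increase of $\Re\int^z\sqrt{Q_0}\,d\zeta$; combined with Lemma~\ref{lem:canonicalcoordinatesubdominant}, this pins down which of $\Psi_{D_j,\pm}^{(p,a)}$ is subdominant and which is dominant near $a$ along $C$. In the left diagram the orientation forces $\Psi_{D_j,-}^{(p,a)}$ to be subdominant, producing formula~(1) with coefficient $i\,e_-/e_+$; in the right diagram the opposite sheet assignment swaps the roles, making $\Psi_{D_j,+}^{(p,a)}$ subdominant and producing formula~(2) with coefficient $i\,e_+/e_-$. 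Verifying that the global Stokes constant remains exactly $i$, with no additional phase acquired from the branch cuts chosen on $\Sigma_\phi$, is the technically delicate part of the argument and should be handled by direct computation in the Airy local model.
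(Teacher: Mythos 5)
Your proposal follows essentially the same route as the paper's proof. The paper introduces the WKB solutions $\psi^{(a)}_\pm$ normalized at the zero $a$ (via a contour $\gamma_z$ encircling $a$), computes directly that $\psi^{(a)}_\pm = e_\pm\,\psi^{(p,a)}_\pm$ with $e_\pm = \exp\left(\pm\tfrac12\int_{\gamma_p}S_{\text{odd}}^{\text{reg}}\right)$, and then cites Theorem~2.25 of \cite{IwakiNakanishi} for the connection behavior of the $a$-normalized Borel sums across the separating trajectory — which is exactly the Voros/Airy connection formula you invoke. Your decomposition $\Psi^{(p,a)}_\pm = e_\pm\widetilde\Psi_\pm$ is the inverse convention to the paper's (which would give coefficient $e_+/e_-$ rather than $e_-/e_+$ in case~(1)), but since the $e_\pm$ are unspecified nonzero constants in the statement this is immaterial; the conceptual structure — factor out the $p$-to-$a$ normalization constant, appeal to the standard Stokes jump $i$ at a simple turning point, and determine dominance from the $\oplus/\ominus$ orientation of the spectral-cover sheets — is identical to the paper's.
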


\begin{proof}
To prove this result, we employ a slightly different connection formula stated in Theorem~2.25 of~\cite{IwakiNakanishi}. This connection formula involves a different version of the WKB solutions. Namely, we define the WKB solution \emph{normalized at the zero} $a$ by the expression 
\[
\psi_{\pm}^{(a)}(z,\hbar)=\frac{1}{\sqrt{S_{\text{odd}}(z,\hbar)}}\exp\left(\pm\int_a^zS_{\text{odd}}(\zeta,\hbar)d\zeta\right).
\]
To define the integral appearing in this expression, we consider the contour $\gamma_z$ illustrated below.
\[
\xy /l2pc/:
(1,0)*{}="22";
(-1,0)*{\times}="32";
(-3,0)*{\bullet}="42";
(0,0)*{}="m";
{"22"\PATH~={**@{~}}'"32"},
"m";"42" **\crv{(0,-1) & (-1,-1)}; 
"m";"42" **\crv{~*=<2pt>{.} (0,1) & (-1,1)}; 
(-0.5,-0.25)*{a};
(-3,-0.35)*{z};
(-3,0.35)*{z^*};
(-0.75,-0.75)*{>};
(-0.75,0.75)*{<};
(-0.75,-1.25)*{\gamma_z};
\endxy
\]
This contour begins at the point $z^*$ for some $z\in\widehat{S}$, proceeds along the second sheet until it reaches the branch cut, and then continues along the first sheet before terminating at~$z$. Its projection to~$S$ goes around~$a$ as illustrated. In terms of this contour, we can define the integral in the above expression as 
\[
\int_a^zS_{\text{odd}}(\zeta,\hbar)d\zeta=\frac{1}{2}\int_{\gamma_z}S_{\text{odd}}(\zeta,\hbar)d\zeta.
\]
Then 
\begin{align*}
\psi_{\pm}^{(a)} &(z,\hbar) \\
&= \frac{1}{\sqrt{S_{\text{odd}}(z,\hbar)}}\exp\left(\pm\frac{1}{2}\left(\int_{\gamma_z}S_{\text{odd}}^{\text{reg}}(\zeta,\hbar)d\zeta+\frac{1}{\hbar}\int_{\gamma_z}\sqrt{Q_0(\zeta)}d\zeta\right)\right) \\
&=\frac{1}{\sqrt{S_{\text{odd}}(z,\hbar)}}\exp\left(\pm\left(\int_p^z S_{\text{odd}}^{\text{reg}}(\zeta,\hbar)d\zeta + \frac{1}{\hbar}\int_a^z\sqrt{Q_0(\zeta)}d\zeta + \frac{1}{2}\int_{\gamma_p} S_{\text{odd}}^{\text{reg}}(\zeta,\hbar)d\zeta\right)\right) \\
&=\psi_{\pm}^{(p,a)}(z,\hbar)\exp\left(\pm\frac{1}{2}\int_{\gamma_p}S_{\text{odd}}^{\text{reg}}(\zeta,\hbar)d\zeta\right) = e_{\pm}\psi_{\pm}^{(p,a)}(z,\hbar)
\end{align*}
where we have written $e_{\pm}$ for the exponential factor in the last line. Thus the matrix that describes how the Borel sums of $e_{\pm}\psi_{\pm}^{(p,a)}(z,\hbar)$ transform under analytic continuation is the same as the one from Theorem~2.25 of~\cite{IwakiNakanishi}. This completes the proof.
\end{proof}

Finally, consider a horizontal strip $D$ with a zero $a$ and pole $p$ on its boundary. As before, we write $\Psi_{D,\pm}^{(p,a)}(z,\hbar)$ for the Borel sum of the WKB solution normalized at $p$ in the domain~$D$. The following result says how these solutions transform when we change the zero~$a$.

\begin{proposition}
\label{prop:changezero}
Let $b$ be a zero on the boundary of~$D$, and let $\alpha$ be a path from~$a$ to~$b$. Then we have 
\[
\Psi_{D,\pm}^{(p,a)}=\exp\left(\pm\frac{1}{\hbar}\int_\alpha\sqrt{Q_0(\zeta)}d\zeta\right)\Psi_{D,\pm}^{(p,b)}.
\]
\end{proposition}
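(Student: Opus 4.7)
The plan is to peel off the relevant exponential factor directly at the level of the formal WKB solutions, and then invoke the compatibility of Borel resummation with multiplication by a prefactor of the form $e^{s/\hbar}$.

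First, I would write out the definitions of the two normalized WKB solutions: by Definition~\ref{def:normalizedWKB}, we have
\[
\psi_{\pm}^{(p,a)}(z,\hbar) = \frac{1}{\sqrt{S_{\text{odd}}(z,\hbar)}}\exp\left(\pm\left(\frac{1}{\hbar}\int_a^z\sqrt{Q_0(\zeta)}\,d\zeta + \int_p^z S_{\text{odd}}^{\text{reg}}(\zeta,\hbar)\,d\zeta\right)\right)
\]
and the analogous formula with $a$ replaced by $b$. Since the two expressions differ only in the lower limit of the $\sqrt{Q_0}$-integral, and since $\alpha$ is a path from $a$ to $b$, splitting $\int_a^z = \int_\alpha + \int_b^z$ yields immediately the formal identity
\[
\psi_{\pm}^{(p,a)}(z,\hbar) = \exp\left(\pm\frac{1}{\hbar}\int_\alpha \sqrt{Q_0(\zeta)}\,d\zeta\right)\psi_{\pm}^{(p,b)}(z,\hbar).
\]

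The content of the proposition is then the same identity with Borel sums in place of the formal series. The prefactor $\exp(\pm \frac{1}{\hbar}\int_\alpha \sqrt{Q_0}\,d\zeta)$ is of the form $e^{s/\hbar}$ for a constant $s\in\mathbb{C}$ depending on $\alpha$; in particular, it is independent of $z$. According to the third clause of the definition of Borel summability recalled in Section~\ref{sec:BorelResummation}, an expression of the form $e^{s/\hbar}\hbar^\rho g(\hbar)$ is Borel summable whenever $g(\hbar)$ is, and its Borel sum is obtained by multiplying $\mathcal{S}[g](\hbar)$ by $e^{s/\hbar}\hbar^\rho$. Applying this to $g(\hbar) = \psi_{\pm}^{(p,b)}(z,\hbar)$ (whose Borel summability in $D$ is exactly the content of Theorem~\ref{thm:Borelsummability}(2)) gives
\[
\Psi_{D,\pm}^{(p,a)}(z,\hbar) = \exp\left(\pm\frac{1}{\hbar}\int_\alpha \sqrt{Q_0(\zeta)}\,d\zeta\right)\Psi_{D,\pm}^{(p,b)}(z,\hbar),
\]
which is the desired formula.

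There is no real obstacle here; the only subtle point is to notice that the difference between normalizing at $a$ versus $b$ is a pure $z$-independent exponential in $1/\hbar$, and hence lies in the class of prefactors that Borel resummation handles by definition rather than requiring a multiplicative product-of-Borel-sums argument. This is essentially bookkeeping, and the proposition follows.
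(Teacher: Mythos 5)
Your proof is correct and follows essentially the same approach as the paper: both split $\int_a^z = \int_\alpha + \int_b^z$ at the level of the formal WKB solutions and then pass to Borel sums, with the paper simply saying ``the statement follows by taking Borel sums'' while you spell out why (via the clause of the Borel summability definition that handles $z$-independent prefactors of the form $e^{s/\hbar}$). No gap.
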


\begin{proof}
By definition of the WKB solution normalized at a pole, we have 
\begin{align*}
\psi_\pm^{(p,a)}(z,\hbar) &= \frac{1}{\sqrt{S_{\text{odd}}(z,\hbar)}}\exp\left(\pm\left(\frac{1}{\hbar}\int_a^z\sqrt{Q_0(\zeta)}d\zeta + \int_p^zS_{\text{odd}}^{\text{reg}}(\zeta,\hbar)d\zeta\right)\right) \\
&= \frac{1}{\sqrt{S_{\text{odd}}(z,\hbar)}}\exp\left(\pm\left(\frac{1}{\hbar}\int_\alpha\sqrt{Q_0(\zeta)}d\zeta + \frac{1}{\hbar}\int_b^z\sqrt{Q_0(\zeta)}d\zeta + \int_p^zS_{\text{odd}}^{\text{reg}}(\zeta,\hbar)d\zeta\right)\right) \\
&= \exp\left(\pm\frac{1}{\hbar}\int_\alpha\sqrt{Q_0(\zeta)}d\zeta\right)\psi_\pm^{(p,b)}(z,\hbar).
\end{align*}
The statement follows by taking Borel sums.
\end{proof}

We can use Proposition~\ref{prop:changezero} to calculate the monodromy of a WKB solution around a pole of order two.

\begin{proposition}
\label{prop:eigenvectormonodromy}
Let $p$ be a pole of $\phi$ of order two and $a$ be a zero. Let $D$ a horizontal strip having $a$ and $p$ on its boundary. 
\begin{enumerate}
\item If the horizontal trajectories are oriented towards the preimage of $p$ in the first sheet of~$\Sigma_\phi$, then $\Psi_{D,-}^{(p,a)}$ is an eigenvector of the monodromy along a small loop $\beta_p$ encircling~$p$ with eigenvalue $\exp\left(-\frac{1}{\hbar}\int_{\beta_p}\sqrt{Q_0(\zeta)}d\zeta\right)$.
\item If the horizontal trajectories are oriented away from the preimage of $p$ in the first sheet of~$\Sigma_\phi$, then $\Psi_{D,+}^{(p,a)}$ is an eigenvector of the monodromy along a small loop $\beta_p$ encircling~$p$ with eigenvalue $\exp\left(\frac{1}{\hbar}\int_{\beta_p}\sqrt{Q_0(\zeta)}d\zeta\right)$.
\end{enumerate}
\end{proposition}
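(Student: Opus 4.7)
My approach has two parts: first, an abstract argument that $\Psi_{D,-}^{(p,a)}$ is forced to be an eigenvector of the monodromy; second, a direct calculation of the eigenvalue from the explicit WKB formula.

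A pole of $\phi$ of order two is a regular singular point of the Schr\"odinger equation~\eqref{eqn:schrodinger}. For generic $\hbar$ the monodromy around $\beta_p$ is semisimple with two distinct eigenvalues; the two one-dimensional eigenspaces are characterized by the asymptotic behavior of their solutions as $z\to p$. In case~(1), the horizontal trajectories in~$D$ flow towards the first-sheet preimage of~$p$, and Lemma~\ref{lem:canonicalcoordinatesubdominant} shows that $\Psi_{D,-}^{(p,a)}$ is the (unique, up to scale) solution decreasing as $z\to p$ along such a trajectory. It must therefore lie in a monodromy eigenspace. Case~(2) follows symmetrically, the reversed trajectory orientation exchanging the roles of $\Psi_{D,+}^{(p,a)}$ and $\Psi_{D,-}^{(p,a)}$.

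To identify the eigenvalue in case~(1), I write
\[
\psi_-^{(p,a)}(z,\hbar) = \frac{1}{\sqrt{S_{\text{odd}}(z,\hbar)}}\exp\!\left(-\frac{1}{\hbar}\int_a^z\sqrt{Q_0(\zeta)}d\zeta\right)\exp\!\left(-\int_p^z S_{\text{odd}}^{\text{reg}}(\zeta,\hbar)d\zeta\right)
\]
and track each factor under analytic continuation around $\beta_p$. Since Borel summation commutes with multiplication and analytic continuation, this can be performed at the level of the defining formal series and then resummed. The middle factor contributes exactly the multiplicative factor $\exp\!\left(-\frac{1}{\hbar}\oint_{\beta_p}\sqrt{Q_0}\right)$, which is the claimed eigenvalue. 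It remains to check that the first and third factors together have trivial monodromy along $\beta_p$.

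This last point is the main technical obstacle. Concretely, one must show that $\oint_{\beta_p}S_{\text{odd}}^{\text{reg}}(\zeta,\hbar)d\zeta=0$ as a formal power series in~$\hbar$, and that this vanishing is compatible with the branch of $1/\sqrt{S_{\text{odd}}}$ determined near the double pole. Assumption~\ref{assumption2.5}(3)(b), which enforces the Langer-type normalization $Q_2(z)=-\tfrac{1}{4z^2}\bigl(1+O(z)\bigr)$, is precisely what guarantees these cancellations---it is a classical fact that with this correction the WKB ansatz correctly reproduces the exact monodromy at a regular singular point. I would verify the vanishing of each coefficient of $\oint_{\beta_p}S_{\text{odd}}^{\text{reg}}$ by first computing $S_0,S_1,\dots$ explicitly in a local coordinate near~$p$, using the expansions $Q_0\sim r/z^2$, $Q_1\sim c/z$, $Q_2\sim-1/(4z^2)$ permitted by Assumption~\ref{assumption2.5}, and observing the cancellation of the would-be residue terms; an inductive argument using the recursion defining the $S_n$ then handles all higher orders. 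Alternatively, one can compare the WKB ansatz directly with Frobenius solutions at the regular singular point.
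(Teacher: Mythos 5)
Your proof takes a genuinely different route from the paper's, and the route has a gap in its second half.

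The paper's argument is concrete and local: it goes around $\beta_p$ one horizontal strip at a time, alternating two already-established connection formulas. Crossing each separating trajectory radiating from $p$ leaves $\Psi_-$ unchanged, by part~(1) of Proposition~\ref{prop:connectionformulaseparating} (the signs force this for $\Psi_-$ when trajectories point into $p$ on the first sheet). Changing the normalizing zero from $a_i$ to $a_{i+1}$ across each strip contributes a factor $\exp\left(-\frac{1}{\hbar}\int_{\alpha_i}\sqrt{Q_0}\right)$, by Proposition~\ref{prop:changezero}. Multiplying these around the loop gives exactly $\exp\left(-\frac{1}{\hbar}\int_{\beta_p}\sqrt{Q_0}\right)$. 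No residue computation, no appeal to uniqueness of decaying solutions.

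Your first step --- the abstract observation that the decaying solution is forced into an eigenline --- is a legitimate and different idea, and is close in spirit to the way the paper later uses Lemma~\ref{lem:eigenvalues} (where the eigenvalues are computed independently from the indicial equation at the regular singular point). If you pursued this route you could identify the eigenvalue by matching the leading $z^{\pm\sqrt{a}/\hbar}$-asymptotics against the Frobenius exponents, bypassing the connection formulas entirely.

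The gap is in your second step. You write that ``Borel summation commutes with multiplication and analytic continuation, [so] this can be performed at the level of the defining formal series and then resummed.'' That is precisely what fails in exact WKB analysis: the analytic continuation of the Borel sum across a separating trajectory is \emph{not} the Borel sum of the formally continued series. Along $\beta_p$ one crosses several separating (Stokes) trajectories, and the Stokes jumps are non-perturbative in $\hbar$ --- invisible to the formal series. The content of Proposition~\ref{prop:connectionformulaseparating} is exactly that $\Psi_-$ (and not $\Psi_+$) passes through these Stokes lines unchanged in the present configuration; this is not a free consequence of the general properties of Borel summation. Your abstract eigenvector argument could, in principle, be upgraded to exclude a Stokes contribution to $\Psi_-$ (if $\Psi_-$ acquired a $\Psi_+$-component the monodromy would not return it to a scalar multiple), but you do not make this link, and until you do the formal tracking of factors is unjustified. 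Finally, even granting the formal-tracking step, you leave the central residue cancellation $\oint_{\beta_p}S_{\text{odd}}^{\text{reg}}\,dz=0$ and the branch of $1/\sqrt{S_{\text{odd}}}$ as ``I would verify,'' so the proposal is incomplete as written.
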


\begin{proof}
We prove only part~1 as the proof of part~2 is similar. If $D$ is a degenerate horizontal strip as in the diagram 
\[
\xy /l2pc/:
(0,0)*{\bullet}="3";
(0,1)*{\times}="4";
(0,3)*{\bullet}="5";
(0,-1)*{}="b";
(-1.5,1)*{}="c";
{"4"\PATH~={**@{-}}'"5"},
{"c"\PATH~={**@{~}}'"4"},
"3";"4" **\crv{(0.5,0.25) & (0.5,0.75)}; 
"b";"5" **\crv{(1.25,-1) & (1.75,1)}; 
"b";"4" **\crv{(-1.25,-1) & (-1.25,1)}; 
(0,-0.4)*{\oplus};
(0,3.4)*{\ominus};
(0.7,0.25)*{D};
(-0.35,0)*{p};
(-0.35,1.25)*{a};
\endxy
\]
then the claim follows immediately from Propositions~\ref{prop:connectionformulaseparating} and~\ref{prop:changezero}. Otherwise, let $D_0,\dots,D_n$ be the horizontal strips adjacent to $p$, ordered cyclically as in the diagram below with $D=D_0$ and $a=a_0$.
\[
\xy /l3.25pc/:
{\xypolygon5"B"{~:{(1.8,0):}~>{}}},
{\xypolygon5"E"{~:{(-1,0):}~>{}}},
{\xypolygon5"F"{~:{(2.1,0):}~>{}}},
(1,0)*{\bullet}="0";
{"0"\PATH~={**@{-}}"B1"},
{"0"\PATH~={**@{-}}"B2"},
{"0"\PATH~={**@{-}}"B3"},
{"0"\PATH~={**@{-}}"B4"},
{"0"\PATH~={**@{-}}"B5"},
{"B1"\PATH~={**@{~}}'"B2"},
{"B2"\PATH~={**@{~}}'"B3"},
{"B3"\PATH~={**@{~}}'"B4"},
{"B4"\PATH~={**@{~}}'"B5"},
{"B5"\PATH~={**@{~}}'"B1"},
"B1"*{\times},
"B2"*{\times},
"B3"*{\times},
"B4"*{\times},
"B5"*{\times},
"E1"*{D_0},
"E2"*{D_1},
"E3"*{D_2},
"E4"*{D_3},
"E5"*{D_4},
"F1"*{a_3},
"F2"*{a_4},
"F3"*{a_0},
"F4"*{a_1},
"F5"*{a_2},
(1.3,-0.1)*{p};
(1,-0.3)*{\oplus};
\endxy
\]
Let $\beta_p$ be the loop that starts at a point in~$D_0$ and goes around~$p$, passing through each of the regions $D_0,\dots,D_n$ in order before returning to the starting point. Let $a_0,\dots,a_n$ be zeros, labeled as in the diagram, and for each $i=0,\dots,n$, let $\alpha_i$ be a path in $D_i$ going from~$a_i$ to~$a_{i+1}$ where we number the indices modulo~$n+1$. By repeatedly applying Propositions~\ref{prop:connectionformulaseparating} and~\ref{prop:changezero}, we see that the analytic continuation of $\Psi_{D_0,-}^{(p,a_0)}$ along $\beta_p$ is given by 
\[
\exp\left(-\frac{1}{\hbar}\int_{\alpha_0}\sqrt{Q_0(\zeta)}d\zeta\right)\dots\exp\left(-\frac{1}{\hbar}\int_{\alpha_n}\sqrt{Q_0(\zeta)}d\zeta\right)\Psi_{D_0,-}^{(p,a_0)},
\]
which equals 
\[
\exp\left(-\frac{1}{\hbar}\int_{\beta_p}\sqrt{Q_0(\zeta)}d\zeta\right)\Psi_{D_0,-}^{(p,a_0)}
\]
as desired.
\end{proof}

\subsection{Calculation of cross ratios}
\label{sec:CalculationOfCrossRatios}

We will now use the connection formulas proved above to explain the relationship between Voros symbols and cross ratios. In what follows, we assume $\phi$ is a complete saddle-free differential associated with a differential equation of the form~\eqref{eqn:schrodinger}.

Consider any generic trajectory $j$ corresponding to an arc of the WKB triangulation. If $\mathcal{U}$ is the universal cover of $S\setminus\Pol(\phi)$, then we can lift $j$ to an arc $\tilde{j}$ in~$\mathcal{U}$. Let $\tilde{\phi}$ be the quadratic differential on~$\mathcal{U}$ obtained by pulling back the quadratic differential $\phi$ along the covering map. This quadratic differential $\tilde{\phi}$ induces a foliation of $\mathcal{U}$, and $\tilde{j}$ is contained in a regular horizontal strip $D_0$ of this foliation. We can label the nearby regions and critical points of $\tilde{\phi}$ as in the diagram on the left below.
\[
\xy /l2pc/:
(0,-2)*{\bullet}="21";
(3,0)*{\bullet}="12";
(1,0)*{\times}="22";
(-1,0)*{\times}="32";
(-3,0)*{\bullet}="42";
(0,2)*{\bullet}="23";
{"12"\PATH~={**@{-}}'"22"},
{"32"\PATH~={**@{-}}'"42"},
{"21"\PATH~={**@{-}}'"22"},
{"21"\PATH~={**@{-}}'"32"},
{"22"\PATH~={**@{-}}'"23"},
{"32"\PATH~={**@{-}}'"23"},
(0,0)*{D_0};
(2,-2)*{D_1};
(2,2)*{D_2};
(-2,2)*{D_3};
(-2,-2)*{D_4};
(0,-2.3)*{p_1};
(3.4,0)*{p_2};
(0,2.3)*{p_3};
(-3.4,0)*{p_4};
(1.25,-0.25)*{a};
(-1.25,-0.25)*{b};
\endxy
\qquad
\xy /l2pc/:
(0,-2)*{\bullet}="21";
(3,0)*{\bullet}="12";
(1,0)*{\times}="22";
(-1,0)*{\times}="32";
(-3,0)*{\bullet}="42";
(0,2)*{\bullet}="23";
{"22"\PATH~={**@{~}}'"32"},
{"12"\PATH~={**@{-}}'"22"},
{"32"\PATH~={**@{-}}'"42"},
{"21"\PATH~={**@{-}}'"22"},
{"21"\PATH~={**@{-}}'"32"},
{"22"\PATH~={**@{-}}'"23"},
{"32"\PATH~={**@{-}}'"23"},
(0,-2.4)*{\ominus};
(0,2.4)*{\ominus};
(-3.4,0)*{\oplus};
(3.4,0)*{\oplus};
\endxy
\]
By abuse of notation, the same labels will also be used to denote the images of these critical points and regions in~$S$. The function $Q_0(z)$ on~$S$ can be pulled back to a function on $\mathcal{U}$ which we also denote $Q_0(z)$. We also consider the spectral cover $\Sigma_{\tilde{\phi}}$ of $\mathcal{U}$. Assume that the foliation of the first sheet of $\Sigma_{\tilde{\phi}}$ is oriented as in the diagram above on the right. Then we have the following statement.

\Needspace*{2\baselineskip}
\begin{lemma}\mbox{}
\label{lem:foursubdominants}
\begin{enumerate}
\item The analytic continuation of $\Psi_{D_1,+}^{(p_1,a)}$ across the horizontal strip $D_0$ into $D_4$ satisfies 
\[
\Psi_{D_1,+}^{(p_1,a)}=\exp\left(\frac{1}{\hbar}\int_{\gamma^+}\sqrt{Q_0(\zeta)}d\zeta\right)\Psi_{D_4,+}^{(p_1,b)}
\]
where $\gamma^+$ is a contour connecting $a$ to $b$ above the branch cut in the diagram.
\item The analytic continuation of $\Psi_{D_1,-}^{(p_2,a)}$ across the separating trajectory into $D_2$ satisfies 
\[
\Psi_{D_1,-}^{(p_2,a)}=\Psi_{D_2,-}^{(p_2,a)}.
\]
\item The analytic continuation of $\Psi_{D_3,+}^{(p_3,b)}$ across the horizontal strip $D_0$ into $D_2$ satisfies 
\[
\Psi_{D_3,+}^{(p_3,b)}=\exp\left(\frac{1}{\hbar}\int_{\gamma^-}\sqrt{Q_0(\zeta)}d\zeta\right)\Psi_{D_2,+}^{(p_3,a)}
\]
where $\gamma^-$ is a contour connecting $b$ to $a$ below the branch cut in the diagram.
\item The analytic continuation of $\Psi_{D_3,-}^{(p_4,b)}$ across the separating trajectory into $D_4$ satisfies 
\[
\Psi_{D_3,-}^{(p_4,b)}=\Psi_{D_4,-}^{(p_4,b)}.
\]
\end{enumerate}
\end{lemma}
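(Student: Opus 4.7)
The plan is to decompose each of the four analytic continuations into compositions of two elementary moves: crossing a single separating trajectory, which is governed by Proposition~\ref{prop:connectionformulaseparating}, and switching the base zero used to normalize a WKB solution, which is governed by Proposition~\ref{prop:changezero}. Parts~2 and~4 involve only a single trajectory crossing and should be immediate once the correct case of Proposition~\ref{prop:connectionformulaseparating} is identified. Parts~1 and~3 traverse the horizontal strip $D_0$ and will split into three steps: cross a separating trajectory into $D_0$, change the base zero inside $D_0$, and cross another separating trajectory out of $D_0$.

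For Parts~2 and~4 I would read off the signs from the spectral-cover diagram: $p_1,p_3$ are $\ominus$ and $p_2,p_4$ are $\oplus$. The separating trajectory between $D_1$ and $D_2$ terminates at $p_2$ ($\oplus$), with the nearby background poles $p_1$ ($\ominus$) and $p_3$ ($\ominus$) playing the role of the top/bottom poles in Proposition~\ref{prop:connectionformulaseparating}; this matches the left diagram, so Case~1 applies and $\Psi_-^{(p_2,a)}$ passes through unchanged. Part~4 is the same argument at the trajectory $b$–$p_4$, where the signs are again of left-diagram type.

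For Part~1 I would write the continuation from $D_1$ into $D_4$ as: cross the trajectory $a$–$p_1$ into $D_0$, change the base zero from $a$ to $b$ along a path $\gamma^+$ in the closure of $D_0$ going above the branch cut, then cross the trajectory $b$–$p_1$ into $D_4$. At both trajectory crossings the terminating pole is $p_1$ ($\ominus$) while the relevant background poles are $\oplus$, putting us in the right-hand configuration of Proposition~\ref{prop:connectionformulaseparating}; Case~2 then preserves $\Psi_+$ with no extra term. The middle step is a direct application of Proposition~\ref{prop:changezero} and contributes precisely $\exp\bigl(\tfrac{1}{\hbar}\int_{\gamma^+}\sqrt{Q_0(\zeta)}\,d\zeta\bigr)$. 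Composing the three steps yields the claimed identity. Part~3 is the symmetric statement at the opposite end of $D_0$: replace $p_1$ by $p_3$ and $\gamma^+$ by the opposite-side path $\gamma^-$, and repeat the argument verbatim.

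The main obstacle is purely one of bookkeeping: at each separating-trajectory crossing one must verify that the local sign configuration really does match the case of Proposition~\ref{prop:connectionformulaseparating} being invoked, and in the change-of-zero step one must check that the homotopy class of the path traced out by the actual analytic continuation across $D_0$ is indeed $\gamma^+$ (respectively $\gamma^-$) rather than the alternative, since $\sqrt{Q_0}$ is multivalued around $a$ and $b$. Getting the $\pm$ conventions right requires carefully aligning the picture on the base surface $S$ with the chosen first-sheet picture on $\Sigma_\phi$, but once this alignment is in place each identity is a direct composition of the two cited propositions.
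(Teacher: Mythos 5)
Your proof is correct and takes the same route as the paper: the paper's own proof consists of a single sentence citing Propositions~\ref{prop:connectionformulaseparating} and~\ref{prop:changezero}, so your detailed decomposition of each continuation into trajectory crossings plus a change-of-base-zero step is precisely the intended argument, spelled out.
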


\begin{proof}
This follows immediately from Propositions~\ref{prop:connectionformulaseparating} and~\ref{prop:changezero}.
\end{proof}

Let us denote the functions appearing in parts~1 through~4 of Lemma~\ref{lem:foursubdominants} by $\psi_1,\dots,\psi_4$, respectively. They are solutions of~\eqref{eqn:schrodinger}. If the pole $p_i$ has order at least three, then $\psi_i$ is subdominant near $p_i$ by Lemma~\ref{lem:canonicalcoordinatesubdominant}. On the other hand, if $p_i$ has order two, then $\psi_i$ is an eigenvector of the monodromy around $p_i$ by Proposition~\ref{prop:eigenvectormonodromy}. We will see later that these solutions in fact define the framing for a $PGL_2(\mathbb{C})$-local system naturally associated to~$\phi$. If we analytically continue the solutions $\psi_1,\dots,\psi_4$ to a common point in~$D_0$, we get corresponding vectors $v_1,\dots,v_4$ in the two-dimensional space of solutions of~\eqref{eqn:schrodinger}. Their cross ratio is given by the expression 
\[
Y_j=\frac{\det(v_1v_2)\det(v_3v_4)}{\det(v_2v_3)\det(v_1v_4)}
\]
where $\det(v_sv_t)$ denotes the determinant of the $2\times2$-matrix having columns $v_s$ and~$v_t$.

\begin{proposition}
\label{prop:regularcrossratio}
There is an identity 
\[
Y_j=\mathcal{S}[e^{V_{\gamma_{\tilde{j}}}}]
\]
where $\gamma_{\tilde{j}}$ is the cycle in $\Sigma_{\tilde{\phi}}$ associated to the arc $\tilde{j}$ as illustrated in the first diagram in Section~\ref{sec:FoliationOfTheSpectralCover}.
\end{proposition}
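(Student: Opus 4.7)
The strategy is to express each of the four functions $\psi_1, \psi_2, \psi_3, \psi_4$ from Lemma~\ref{lem:foursubdominants}, after analytic continuation to the central horizontal strip $D_0$, as a linear combination of the single fixed basis $\{\Psi_+, \Psi_-\} := \{\Psi_{D_0,+}^{(p_1,a)}, \Psi_{D_0,-}^{(p_1,a)}\}$ of solutions of~\eqref{eqn:schrodinger} on $D_0$. Writing $v_i = A_i \Psi_+ + B_i \Psi_-$, the cross-ratio becomes
\[
Y_j \;=\; \frac{(A_1 B_2 - A_2 B_1)(A_3 B_4 - A_4 B_3)}{(A_2 B_3 - A_3 B_2)(A_1 B_4 - A_4 B_1)},
\]
since the Wronskian factor $\det(\Psi_+\Psi_-)$ appears equally in numerator and denominator and cancels.

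To compute the coefficients $A_i, B_i$, I combine three operations: changing the normalizing pole via Proposition~\ref{prop:horizontalstrip} (which contributes a Borel-summed ``half-period'' factor $x_{ij} = \mathcal{S}[\exp(\int_{p_i}^{p_j} S^{\text{reg}}_{\text{odd}}\,dz)]$), changing the normalizing zero from $a$ to $b$ via Proposition~\ref{prop:changezero} (which contributes an exponential $\exp(\pm\hbar^{-1}\int_\alpha\sqrt{Q_0}\,dz)$), and continuing across the separating trajectory between each $D_i$ and $D_0$ via Proposition~\ref{prop:connectionformulaseparating}. The key observation is that each $\psi_i$ from Lemma~\ref{lem:foursubdominants} is the subdominant solution at the pole $p_i$ (as ensured by the matching of $\pm$ labels in Lemma~\ref{lem:foursubdominants} with the $\ominus/\oplus$ labels in the diagram), so the application of Proposition~\ref{prop:connectionformulaseparating} in each case falls on the side where the solution in question stays ``pure'' and only the orthogonal component picks up a Stokes off-diagonal term. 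Because the determinant $\det(v_s v_t)$ is bilinear and antisymmetric, adding a multiple of $\Psi_{\pm}$ to one of the $v_i$'s that already has the same $\pm$ as its main component drops out of the computation. Hence, for the purposes of the cross-ratio, we may replace each $v_i$ by its ``pure'' part.

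With the Stokes terms eliminated, the surviving coefficients are products of just $x_{ij}$'s and change-of-zero exponentials. Explicit tracking gives (up to an overall scalar that cancels) $v_1 \simeq \Psi_+$, $v_3 \simeq \exp(\hbar^{-1}\int_{\gamma^-}\sqrt{Q_0})\, x_{13}\, \Psi_+$, $v_2 \simeq x_{12}\, \Psi_-$, and $v_4 \simeq \exp(-\hbar^{-1}\int_{\gamma^+}\sqrt{Q_0})\,x_{14}\,\Psi_-$, where the signs in the change-of-zero exponentials reflect the $\pm$ of the corresponding WKB solution and the paths $\gamma^{\pm}$ are those from Lemma~\ref{lem:foursubdominants}. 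Substituting into the cross-ratio, the four determinants collapse and one obtains
\[
Y_j \;=\; \exp\!\Bigl(\tfrac{1}{\hbar}\!\!\int_{\gamma^- - \gamma^+}\!\!\sqrt{Q_0}\,dz\Bigr)\cdot \frac{x_{12}\,x_{34}}{x_{23}\,x_{41}},
\]
where I have used Proposition~\ref{prop:horizontalstrip} once more to rewrite ratios like $x_{13}/x_{14}$ in terms of adjacent-pole factors. The multiplicativity of Borel summation (Proposition~2.11) then combines the four $x_{ij}$ into a single factor $\mathcal{S}[\exp(\oint S^{\text{reg}}_{\text{odd}}\,dz)]$ integrated around a loop on $\Sigma_{\tilde\phi}$ that traverses the path $p_1 \to p_2 \to p_3 \to p_4 \to p_1$; the residual change-of-zero exponential assembles into $\exp(\hbar^{-1}\oint \sqrt{Q_0}\,dz)$ around the same loop. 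Because $\gamma^+$ and $\gamma^-$ pass on opposite sides of the branch cut between $a$ and $b$, the resulting closed path on $\Sigma_{\tilde\phi}$ lifts to precisely the cycle $\gamma_{\tilde j}$ depicted in Section~\ref{sec:FoliationOfTheSpectralCover}. Using the decomposition $S_{\text{odd}} = \hbar^{-1}\sqrt{Q_0} + S^{\text{reg}}_{\text{odd}}$, the product is exactly $\mathcal{S}[e^{V_{\gamma_{\tilde j}}}]$.

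\textbf{Main obstacle.} The chief difficulty is the final identification of the assembled loop with the cycle $\gamma_{\tilde j}$ on $\Sigma_{\tilde\phi}$. The four pole-to-pole segments $p_i \to p_{i+1}$ are each well-defined paths inside adjacent horizontal strips, but the two change-of-zero contributions $\gamma^\pm$ cross the branch cut joining $a$ and $b$ in opposite directions, forcing a sheet change. One must verify that these sheet changes are exactly what is needed for the concatenation to become a closed cycle on $\Sigma_{\tilde\phi}$ that is homologous to $\gamma_{\tilde j}$, rather than a null-homologous loop on $\mathcal{U}$ or a loop wrapping twice. A secondary but more mechanical point is to verify case-by-case from the $\oplus/\ominus$ decoration that the Stokes corrections at each of the four separating trajectories bordering $D_0$ do indeed fall on the ``orthogonal'' component of the relevant $v_i$, as claimed above.
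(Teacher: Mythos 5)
Your proposal takes a genuinely different route from the paper's proof, and unfortunately there is a fatal error in it. You express all four continuations $v_1,\dots,v_4$ in a single fixed basis $\{\Psi_+,\Psi_-\}$ on $D_0$, identify the ``pure parts'' $v_1\simeq\Psi_+$, $v_3\simeq C_3\Psi_+$, $v_2\simeq x_{12}\Psi_-$, $v_4\simeq C_4\Psi_-$, and argue that the Stokes off-diagonal terms drop from the cross-ratio by antisymmetry of $\det$. But observe what your pure parts actually give when substituted into
\[
Y_j=\frac{\det(v_1v_2)\,\det(v_3v_4)}{\det(v_2v_3)\,\det(v_1v_4)}.
\]
If $v_1$ and $v_3$ are both proportional to $\Psi_+$ and $v_2,v_4$ both to $\Psi_-$, then $\det(v_1v_2)=x_{12}W$, $\det(v_3v_4)=C_3C_4W$, $\det(v_2v_3)=-x_{12}C_3W$, $\det(v_1v_4)=C_4W$ (with $W=\det(\Psi_+\Psi_-)$), so every scalar cancels and $Y_j=-1$ identically. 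This cannot be $\mathcal{S}[e^{V_{\gamma_{\tilde j}}}]$. The problem is that for a generic framed local system the four framing lines are in general position in $\mathbb{CP}^1$, so the four vectors $v_i$ cannot become pairwise-proportional after discarding Stokes terms; the replacement ``$v_i$ by its pure part'' does not merely drop terms that cancel, it collapses the configuration to two lines. Your assertion ``adding a multiple of $\Psi_\pm$ to a $v_i$ that already has the same $\pm$ as its main component drops out'' is true but irrelevant here: the Stokes correction in Proposition~\ref{prop:connectionformulaseparating} is always in the \emph{orthogonal} direction, not the same direction.

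The deeper issue is the one you flag yourself under ``main obstacle'': the branch cut joining $a$ and $b$ runs through $D_0$, so continuing $\psi_3,\psi_4$ from the $b$-side of $D_0$ to the $a$-side (where $\psi_1,\psi_2$ live) necessarily crosses it, and the resulting bookkeeping of $\pm$-signs, Stokes factors, and normalization changes in a single $D_0$-basis is genuinely nontrivial. The paper avoids it entirely. Instead of transporting everything to one region, it computes each determinant $\det(v_iv_{i+1})$ in the region $D_i$ adjacent to both $p_i$ and $p_{i+1}$, where $\psi_i$ and $\psi_{i+1}$ are naturally expressed as $\Psi_\pm$ relative to the \emph{same} normalizing zero with no Stokes corrections, then uses the observation that all the connection matrices of Propositions~\ref{prop:horizontalstrip}, \ref{prop:connectionformulaseparating}, \ref{prop:changezero} lie in $SL_2(\mathbb{C})$, so $\det(v_iv_{i+1})=\det(u_iu_{i+1})$ is transport-independent. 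This gives the four determinants $x_{12}$, $x_{34}$, $x_{23}\exp(\hbar^{-1}\int_{\gamma^-}\sqrt{Q_0})$, $x_{14}\exp(\hbar^{-1}\int_{\gamma^+}\sqrt{Q_0})$ directly, and the final assembly into $\mathcal{S}[e^{V_{\gamma_{\tilde j}}}]$ is Lemma~7.1 of~\cite{IwakiNakanishi}. If you want to salvage your $D_0$-basis strategy, you would need to compute the full Stokes-corrected coefficients $A_i,B_i$ (not just the pure parts) and show that the $2\times2$ minors reproduce the same four quantities; the cancellations are not at the level you claimed.
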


\begin{proof}
Consider any of the regions $D_i$ for $i=1,\dots,4$. The poles $p_i$ and $p_{i+1}$ lie on the boundary of this region if we number the poles modulo~4. Associated to these poles are the solutions $\psi_i$ and $\psi_{i+1}$ of~\eqref{eqn:schrodinger}, and if we analytically continue these to a common point in~$D_i$, we get a pair of vectors~$u_i$ and~$u_{i+1}$ in the two-dimensional space of solutions. By Propositions~\ref{prop:horizontalstrip}, \ref{prop:connectionformulaseparating}, and~\ref{prop:changezero}, we know that the matrices that describe how solutions transform under analytic continuation are elements of $SL_2(\mathbb{C})$. It follows that 
\[
\det(v_iv_{i+1})=\det(u_iu_{i+1}).
\]
Now, by Proposition~\ref{prop:horizontalstrip}, 
\[
\det(u_1u_2)=\det\left(\begin{array}{cc} 1 & 0 \\ 0 & x_{12} \end{array}\right)=x_{12}, \quad \det(u_3u_4)=\det\left(\begin{array}{cc} 1 & 0 \\ 0 & x_{34} \end{array}\right)=x_{34}.
\]
On the other hand, by Proposition~\ref{prop:horizontalstrip} and Lemma~\ref{lem:foursubdominants}, we have 
\[
\det(u_2u_3)=\det\left(\begin{array}{cc} 1 & 0 \\ 0 & x_{23}\exp\left(\frac{1}{\hbar}\int_{\gamma^-}\sqrt{Q_0(\zeta)}d\zeta\right) \end{array}\right) = x_{23}\exp\left(\frac{1}{\hbar}\int_{\gamma^-}\sqrt{Q_0(\zeta)}d\zeta\right)
\]
and 
\[
\det(u_1u_4)=\det\left(\begin{array}{cc} 1 & 0 \\ 0 & x_{14}\exp\left(\frac{1}{\hbar}\int_{\gamma^+}\sqrt{Q_0(\zeta)}d\zeta\right) \end{array}\right) = x_{14}\exp\left(\frac{1}{\hbar}\int_{\gamma^+}\sqrt{Q_0(\zeta)}d\zeta\right).
\]
Hence, by Lemma~7.1 of~\cite{IwakiNakanishi}, we have 
\[
Y_j=\exp\left(\frac{1}{\hbar}\oint_{\gamma_{\tilde{j}}}\sqrt{Q_0(\zeta)}d\zeta\right)\frac{x_{12}x_{34}}{x_{23}x_{14}} =\mathcal{S}[e^{V_{\gamma_{\tilde{j}}}}]
\]
as desired.
\end{proof}

If $j$ corresponds to an arc of the WKB triangulation which is not the internal edge of a self-folded triangle, we set $X_j=Y_j$. On the other hand, if $j$ is the internal edge of a self-folded triangle and $k$ is the edge that surrounds $j$, then we set $X_j=Y_jY_k$.

\begin{theorem}
For every arc $j$ of the WKB triangulation, there is an identity 
\[
X_j=\mathcal{S}[e^{V_{\gamma_j}}].
\]
\end{theorem}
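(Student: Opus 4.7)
The proof splits into two cases according to whether or not $j$ is the internal edge of a self-folded triangle in the WKB triangulation. In both cases Proposition~\ref{prop:regularcrossratio}, applied in the universal cover $\mathcal{U}$ of $S\setminus\Pol(\phi)$, does the main work; the remaining content is a bookkeeping exercise identifying the relevant classes in $\mathcal{H}$.

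\textbf{Case 1: $j$ is not the internal edge of a self-folded triangle.} Here $X_j=Y_j$ by definition. The generic trajectory $j$ lies in a regular horizontal strip of $\phi$ that does not surround a degenerate strip, so the local picture on $S$ is exactly the one considered in Section~\ref{sec:CalculationOfCrossRatios}. Choosing a lift $\tilde{j}\subseteq\mathcal{U}$, the covering map $\mathcal{U}\to S\setminus\Pol(\phi)$ induces a homomorphism $H_1(\Sigma_{\tilde\phi}\setminus\pi^{-1}\Crit(\tilde\phi))\to\mathcal{H}$ which carries $\gamma_{\tilde j}$ to the cycle $\gamma_j$ defined in the first diagram of Section~\ref{sec:FoliationOfTheSpectralCover}. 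Since $S_{\text{odd}}$ is computed locally from $Q_0$ and its derivatives, the period integral is unchanged under this covering, so $V_{\gamma_{\tilde j}}=V_{\gamma_j}$ as formal series, and Proposition~\ref{prop:regularcrossratio} immediately yields $X_j=\mathcal{S}[e^{V_{\gamma_j}}]$.

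\textbf{Case 2: $j$ is the internal edge of a self-folded triangle, with encircling loop $k$.} By definition $X_j=Y_jY_k$. The key observation is that in the universal cover $\mathcal{U}$, the self-folded triangle unfolds, so the arcs $\tilde j$ and $\tilde k$ both lie in regular (non-degenerate) horizontal strips of $\tilde\phi$ of the type treated by Proposition~\ref{prop:regularcrossratio}. Applying that proposition separately to $\tilde j$ and $\tilde k$ gives
\[
Y_j=\mathcal{S}[e^{V_{\gamma_{\tilde j}}}], \qquad Y_k=\mathcal{S}[e^{V_{\gamma_{\tilde k}}}],
\]
and the multiplicativity of Borel summation $\mathcal{S}[f\cdot g]=\mathcal{S}[f]\cdot\mathcal{S}[g]$ gives
\[
X_j=\mathcal{S}\bigl[e^{V_{\gamma_{\tilde j}+\gamma_{\tilde k}}}\bigr].
\]
It then remains to verify the homological identity $\gamma_{\tilde j}+\gamma_{\tilde k}=\gamma_j$ in $\mathcal{H}$, where $\gamma_j$ is the cycle associated to the degenerate strip in Section~\ref{sec:FoliationOfTheSpectralCover} (second collection of diagrams). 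This is a direct cycle-tracing computation: the pushforwards of $\gamma_{\tilde j}$ and $\gamma_{\tilde k}$ are both invariant under the covering involution modulo $\Sym$, and when one draws their sum on the first sheet one sees that it wraps around the interior puncture precisely as in the diagram defining $\gamma_j$ in the degenerate case. Because the sign conventions on $\oplus/\ominus$ are compatible in the unfolding, no orientation correction is needed.

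\textbf{Main obstacle.} The only non-mechanical step is the homological identification $\gamma_{\tilde j}+\gamma_{\tilde k}=\gamma_j$ in Case~2. One must check that the specific orientations and the branch cut placements used in the degenerate-strip picture of Section~\ref{sec:FoliationOfTheSpectralCover} agree with what one obtains by pushing forward the two lifted cycles from the universal cover; this amounts to inspecting the two pictures side by side and confirming that the two curves, when drawn on $\Sigma_\phi$, concatenate (up to a trivially null class) to the cycle shown for $\gamma_j$ in the degenerate case. Once this is granted, everything else follows from Proposition~\ref{prop:regularcrossratio} and the multiplicative property of $\mathcal{S}$.
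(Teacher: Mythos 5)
Your proposal is correct and takes essentially the same route as the paper: Case~1 follows directly from Proposition~\ref{prop:regularcrossratio} together with $V_{\gamma_{\tilde{j}}}=V_{\gamma_j}$, and Case~2 applies that proposition to $\tilde{j}$ and $\tilde{k}$ after unfolding in the universal cover and identifying, modulo involution-invariant classes, $\gamma_{\tilde{j}}+\gamma_{\tilde{k}}$ with $\gamma_j$. The paper carries out your ``main obstacle'' step by drawing the lift $\tilde{\gamma}_j$ in the universal cover, splitting it into two cycles $\tilde{\gamma}_j^{(1)}$ and $\tilde{\gamma}_j^{(2)}$, and verifying by a change of branch that these agree with $\gamma_{\tilde{j}}$ and $\gamma_{\tilde{k}}$ in $\mathcal{H}$, which is exactly the cycle-tracing computation you describe.
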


\begin{proof}
If $j$ is not the internal edge of a self-folded triangle, then this follows immediately from Proposition~\ref{prop:regularcrossratio} and the fact that $V_{\gamma_{\tilde{j}}}=V_{\gamma_j}$. We therefore assume $j$ is the internal edge of a self-folded triangle. Suppose the cycle $\gamma_j$ is given by the picture 
\[
\xy /l2pc/:
(0,-3)*{\bullet}="1";
(0,-2)*{\times}="2";
(0,0)*{\bullet}="3";
(0,1)*{\times}="4";
(0,3)*{\bullet}="5";
(2.5,-2)*{}="a";
(0,-1)*{}="b";
(2.5,1)*{}="c";
{"1"\PATH~={**@{-}}'"2"},
{"4"\PATH~={**@{-}}'"5"},
{"a"\PATH~={**@{~}}'"2"},
{"c"\PATH~={**@{~}}'"4"},
"2";"5" **\crv{(2.5,-1) & (2.5,2)}; 
"2";"5" **\crv{(-2.5,-1) & (-2.5,2)}; 
"3";"4" **\crv{(0.5,0.25) & (0.5,0.75)}; 
"b";"5" **\crv{(1.25,-1) & (1.75,1)}; 
"b";"4" **\crv{(-1.25,-1) & (-1.25,1)}; 
(0,-0.4)*{\oplus};
(0,-3.4)*{\ominus};
(0,3.4)*{\oplus};
(0.5,-2)*{}="x";
(-0.25,-2)*{}="y";
(0.5,1)*{}="z";
(-0.25,1)*{}="w";
"x";"y" **\crv{(0.25,-2.5) & (-0.7,-2.5)}; 
"y";"w" **\crv{(1,-1) & (1,0)}; 
"z";"w" **\crv{(0.25,1.5) & (-0.7,1.5)}; 
"x";"z" **\crv{~*=<2pt>{.} (1.5,-1) & (1.5,0)}; 
(1.25,-0.5)*{\wedge};
(0.68,-0.5)*{\vee};
(1.45,0)*{\gamma_j};
(-0.4,-3)*{r};
(-0.4,0)*{p};
(-0.4,3.2)*{q};
\endxy
\]
(The proof in the other case is similar.) Then the corresponding picture in the universal cover is the following:
\[
\xy /l2pc/:
{\xypolygon5"A"{~:{(-3,0):}~>{}}},
{\xypolygon5"B"{~:{(-1.5,0):}~>{}}},
{\xypolygon5"C"{~:{(-3.4,0):}~>{}}},
{\xypolygon5"D"{~:{(-3,0.4):}~>{}}},
{"A1"\PATH~={**@{-}}"B2"},
{"A2"\PATH~={**@{-}}"B2"},
{"A3"\PATH~={**@{-}}"B2"},
{"A3"\PATH~={**@{-}}(1,0)},
{"A1"\PATH~={**@{-}}(1,0)},
{"A3"\PATH~={**@{-}}"B4"},
{"A5"\PATH~={**@{-}}"B4"},
{"A5"\PATH~={**@{-}}(1,0)},
{"A4"\PATH~={**@{-}}"B4"},
"A1"*{\bullet},
"A2"*{\bullet},
"A3"*{\bullet},
"A4"*{\bullet},
"A5"*{\bullet},
"B2"*{\times},
"B4"*{\times},
"C1"*{\oplus},
"C2"*{\ominus},
"C3"*{\oplus},
"C4"*{\oplus},
"C5"*{\oplus},
(1,0)*{\times},
"D1"*{q},
"D2"*{r},
"D3"*{q},
"D4"*{q},
"D5"*{p},
(1,0)*{\times},
(2,-2.5)*{}="b1";
(3.25,-1.65)*{}="b2";
(3.25,1.5)*{}="b3";
(1.5,2.5)*{}="b4";
(-1.5,1)*{}="b5";
{"b1"\PATH~={**@{~}}'"B2"},
{"b2"\PATH~={**@{~}}'"B4"},
{"b3"\PATH~={**@{~}}'"b4"},
{(1,0)\PATH~={**@{~}}'"b5"},
(1.3,-1.75)*{}="c1";
(0.7,-1.45)*{}="c2";
(2.2,0.55)*{}="c3";
(1.65,1.3)*{}="c4";
"c1";"c3" **\crv{~*=<2pt>{.} (1.75,-1) & (2.1,0)}; 
"c1";"c2" **\crv{(1.25,-2.2) & (0.4,-2)}; 
"c2";"c4" **\crv{(1.25,-0.75) & (1.75,0.75)}; 
"c3";"c4" **\crv{(2.5,2) & (1.6,2)}; 
(1.57,0.5)*{\vee};
(2.1,0.15)*{\wedge};
(0.9,-0.5)*{\tilde{\gamma}_j};
\endxy
\]
Here $\tilde{\gamma}_j$ denotes a lift of the cycle $\gamma_j$ to $\Sigma_{\tilde{\phi}}$, and we have labeled the poles by their images in~$S$. It is easy to see that, up to cycles invariant under the covering involution, $\tilde{\gamma}_j$ is the sum of two cycles, $\tilde{\gamma}_j^{(1)}$ and $\tilde{\gamma}_j^{(2)}$, illustrated in the diagram below.
\[
\xy /l2pc/:
{\xypolygon5"A"{~:{(-3,0):}~>{}}},
{\xypolygon5"B"{~:{(-1.5,0):}~>{}}},
{\xypolygon5"C"{~:{(-3.4,0):}~>{}}},
{\xypolygon5"D"{~:{(-3,0.4):}~>{}}},
{"A1"\PATH~={**@{-}}"B2"},
{"A2"\PATH~={**@{-}}"B2"},
{"A3"\PATH~={**@{-}}"B2"},
{"A3"\PATH~={**@{-}}(1,0)},
{"A1"\PATH~={**@{-}}(1,0)},
{"A3"\PATH~={**@{-}}"B4"},
{"A5"\PATH~={**@{-}}"B4"},
{"A5"\PATH~={**@{-}}(1,0)},
{"A4"\PATH~={**@{-}}"B4"},
"A1"*{\bullet},
"A2"*{\bullet},
"A3"*{\bullet},
"A4"*{\bullet},
"A5"*{\bullet},
"B2"*{\times},
"B4"*{\times},
"C1"*{\oplus},
"C2"*{\ominus},
"C3"*{\oplus},
"C4"*{\oplus},
"C5"*{\oplus},
(1,0)*{\times},
"D1"*{q},
"D2"*{r},
"D3"*{q},
"D4"*{q},
"D5"*{p},
(1,0)*{\times},
(2,-2.5)*{}="b1";
(3.25,-1.65)*{}="b2";
(3.25,1.5)*{}="b3";
(1.5,2.5)*{}="b4";
(-1.5,1)*{}="b5";
{"b1"\PATH~={**@{~}}'"B2"},
{"b2"\PATH~={**@{~}}'"B4"},
{"b3"\PATH~={**@{~}}'"b4"},
{(1,0)\PATH~={**@{~}}'"b5"},
(1.3,-1.75)*{}="c1";
(0.7,-1.75)*{}="c2";
(0.7,0.1)*{}="c3";
(1.3,0.1)*{}="c4";
(2.05,0.8)*{}="c5";
(1.65,1.3)*{}="c6";
(1.3,-0.1)*{}="c7";
"c1";"c2" **\crv{(1.25,-2.2) & (0.75,-2.2)}; 
"c2";"c3" **\crv{"c2" & "c3"}; 
"c4";"c3" **\crv{~*=<2pt>{.} (1.25,0.5) & (0.75,0.5)}; 
"c1";"c4" **\crv{~*=<2pt>{.} "c1" & "c4"}; 
"c5";"c6" **\crv{(2.5,1.2) & (2.25,2)}; 
"c6";"c3" **\crv{"c6" & "c3"}; 
"c5";"c7" **\crv{~*=<2pt>{.} "c5" & "c7"}; 
"c7";"c3" **\crv{~*=<2pt>{.} (1.1,-0.6) & (0.4,-0.5)}; 
(0.7,-0.75)*{\vee};
(1.3,-0.75)*{\wedge};
(2.27,1.25)*{\wedge};
(0.9,-0.42)*{>};
(0.9,1)*{\tilde{\gamma}_j^{(1)}};
(0.15,-0.75)*{\tilde{\gamma}_j^{(2)}};
\endxy
\]
Let $k$ be the edge of the WKB triangulation that surrounds $j$. By changing the branch, one sees that $\tilde{\gamma}_j^{(1)}$ and $\tilde{\gamma}_j^{(2)}$ are equal, up to cycles invariant under the covering involution, to $\gamma_{\tilde{j}}$ and $\gamma_{\tilde{k}}$, respectively. Thus we have 
\begin{align*}
\mathcal{S}[e^{V_{\gamma_j}}] &= \mathcal{S}\left[\exp\left(\int_{\gamma_j} S_{\text{odd}}(z,\hbar)dz\right)\right] \\
&= \mathcal{S}[e^{V_{\gamma_{\tilde{j}}}}] \mathcal{S}[e^{V_{\gamma_{\tilde{k}}}}] \\
&= Y_j Y_k
\end{align*}
by Proposition~\ref{prop:regularcrossratio}. This completes the proof.
\end{proof}

\subsection{The main theorem}

Let $S$ be a closed Riemann surface. For any collection of distinct points $p_1,\dots,p_k$ on~$S$, we define $\mathcal{P}$ to be the projective structure on the punctured surface 
\[
S^*=S\setminus\{p_1,\dots,p_k\}
\]
obtained by uniformization.

\begin{lemma}
\label{lem:uniformization}
Let $\mathcal{P}_0$ be the projective structure obtained by uniformization of the closed surface~$S$. If $\chi(S^*)\leq0$, then for any of the points $p_i$, there is a chart $z$ in~$\mathcal{P}_0$ with $z(p_i)=0$ such that the quadratic differential $\mathcal{P}-\mathcal{P}_0|_{S^*}$ is given in this local coordinate by the expression 
\[
-\frac{1}{4z^2}dz^{\otimes2}.
\]
In particular, $\mathcal{P}$ is a meromorphic projective structure on~$S$.
\end{lemma}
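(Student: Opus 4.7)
The plan is to combine the uniformization theorem on $S^*$ with a Schwarzian computation at each cusp of the Fuchsian uniformization. Since $\chi(S^*)\leq 0$, the universal cover of $S^*$ is the upper half-plane $\mathbb{H}$, so $S^* = \mathbb{H}/\Gamma$ for some torsion-free Fuchsian group~$\Gamma$, and $\mathcal{P}$ is the projective structure whose developing map $\widetilde{S^*}\to\mathbb{CP}^1$ is, up to post-composition by a M\"obius transformation, the inclusion $\mathbb{H}\hookrightarrow\mathbb{CP}^1$. Each puncture $p_i$ corresponds to a cusp of~$\Gamma$, whose parabolic stabilizer I normalize by conjugating in $PSL_2(\mathbb{R})$ to be generated by $w\mapsto w+1$ with cusp at $i\infty$. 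The quotient map $w\mapsto q:=e^{2\pi i w}$ then identifies a punctured neighborhood of $p_i$ in $S^*$ with a punctured disk, and $q$ extends to a holomorphic coordinate on a full neighborhood of $p_i$ in $S$ by declaring $q(p_i)=0$.

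Next, pick any chart $\tilde{z}$ of $\mathcal{P}_0$ centered at $p_i$, and write $q = Q(\tilde{z})$ for a local biholomorphism $Q$ with $Q(0)=0$. On a simply connected sector of the punctured disk a chart of $\mathcal{P}$ is given, up to post-composition by a M\"obius transformation, by $w(\tilde{z}) = (\log Q(\tilde{z}))/(2\pi i)$. Since $\mathcal{P}-\mathcal{P}_0 = -\tfrac{1}{2}\{w,\tilde{z}\}\,d\tilde{z}^{\otimes 2}$ and the Schwarzian derivative is invariant under nonzero scaling and under post-composition by M\"obius transformations, it suffices to compute $\{\log Q(\tilde{z}),\tilde{z}\}$. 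Applying the chain rule $\{f\circ g,z\} = \{f,g\}\,(g')^2 + \{g,z\}$ together with the elementary identity $\{\log u,u\} = 1/(2u^2)$ yields
\[
\{\log Q(\tilde{z}),\tilde{z}\} = \frac{(Q'(\tilde{z}))^2}{2\,Q(\tilde{z})^2} + \{Q(\tilde{z}),\tilde{z}\},
\]
where the second term is holomorphic at $\tilde{z}=0$. Since $Q(\tilde{z}) = Q'(0)\tilde{z}(1+O(\tilde{z}))$, the first term expands as $1/(2\tilde{z}^2) + O(\tilde{z}^{-1})$, so $\mathcal{P}-\mathcal{P}_0$ has a double pole at $p_i$ with leading coefficient $-1/4$.

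Finally, the two-complex-parameter family of charts of $\mathcal{P}_0$ centered at $p_i$ (given by M\"obius transformations fixing $0$) provides enough freedom to absorb the subleading residue term, producing a chart $z$ in which $\mathcal{P}-\mathcal{P}_0$ takes the canonical local form $-\tfrac{1}{4z^2}\,dz^{\otimes 2}$. In particular, $\mathcal{P}-\mathcal{P}_0$ extends across each $p_i$ as a meromorphic quadratic differential with a double pole, hence $\mathcal{P}$ is a meromorphic projective structure on $S$. The main technical point is the identification of the cusp coordinate $q$ as a holomorphic coordinate on $S$ across the puncture, which follows from the standard local model of a parabolic cusp once the stabilizer has been put in the normal form $w\mapsto w+1$.
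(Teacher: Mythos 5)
The Schwarzian computation at the heart of your argument is correct and matches the paper's: the uniformizing structure on $S^*$ is given near $p_i$ by a branch of $\frac{1}{2\pi i}\log$, and $\{\log u, u\} = \frac{1}{2u^2}$ produces the coefficient $-\frac{1}{4}$ at the double pole. However, there are two concrete problems with how you assemble this.

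First, your opening claim that ``since $\chi(S^*)\leq 0$, the universal cover of $S^*$ is the upper half-plane'' is false in the borderline case $\chi(S^*)=0$. With at least one puncture, $\chi(S^*)=0$ forces $S^*$ to be the twice-punctured sphere $\mathbb{C}^*$, whose universal cover is $\mathbb{C}$ (via the exponential), not $\mathbb{H}$; there is no Fuchsian group $\Gamma\subset PSL_2(\mathbb{R})$ here. The paper treats this case separately, taking $\mathcal{P}$ directly to be the structure with developing map $\log z$ on $\mathbb{C}^*$, and the Schwarzian computation goes through by hand. Your framework needs the same case split.

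Second, and more seriously, the final normalization step does not work. You correctly observe that in an arbitrary chart $\tilde z$ of $\mathcal{P}_0$ the Schwarzian has the form $\frac{1}{2\tilde z^2} + \frac{a}{\tilde z} + b + c\tilde z + \cdots$, with an extra holomorphic contribution $\{Q,\tilde z\}$. But the two-parameter family of $\mathcal{P}_0$-charts fixing $p_i$, namely $z = \alpha\tilde z/(1+\beta\tilde z)$, cannot possibly kill the infinitely many Taylor coefficients $b,c,\dots$. A direct expansion shows that $\beta = 2a$ eliminates the simple-pole term, but the constant coefficient becomes $(3a^2+b)/\alpha^2$, which is nonzero whenever $3a^2+b\neq 0$, and the higher-order coefficients are likewise unconstrained by $\alpha$ and $\beta$. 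So the assertion that a $\mathcal{P}_0$-chart can be chosen making $\mathcal{P}-\mathcal{P}_0$ equal to $-\frac{1}{4z^2}\,dz^{\otimes 2}$ \emph{exactly} is unjustified, and in general false. What your argument does establish is that $\mathcal{P}-\mathcal{P}_0$ extends across $p_i$ with a double pole of leading coefficient $-\frac{1}{4}$, i.e. that $\mathcal{P}$ is a meromorphic projective structure with $Q_2(z) = -\frac{1}{4z^2}(1 + O(z))$ near $p_i$; this is exactly what is needed later to verify Assumption~\ref{assumption2.5}. The paper arrives at the exact normal form by computing $\{f(z),z\}$ directly in the disk coordinate $z$ furnished by the cusp (rather than in a general $\mathcal{P}_0$-chart $\tilde z = Q^{-1}(z)$), which gives $-\frac{1}{4z^2}\,dz^{\otimes 2}$ on the nose; your instinct to compare against a bona fide $\mathcal{P}_0$-chart is reasonable, but if you go that route you should state the conclusion only up to holomorphic terms.
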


\begin{proof}
To start, let us assume that $\chi(S^*)<0$. Then the uniformization theorem implies that the universal cover of $S^*$ is identified with the upper half plane. Under this identification, the deck transformation corresponding to a small loop surrounding $p_i$ is parabolic, and we may assume it is given by $z\mapsto z+1$. Thus there is a neighborhood $U$ of~$p_i$ in~$S$ such that the punctured neighborhood $U\setminus\{p_i\}$ is isomorphic as a complex manifold to the quotient 
\[
\{z:\Im(z)>h\}/\sim
\]
for some $h>0$ where $\sim$ is the relation defined by $z\sim z+1$. By applying the map $z\mapsto\exp(2\pi iz)$, we see that 
\[
U\setminus\{p_i\}\cong\{z:0<|z|<e^{-2\pi h}\}.
\]
A chart of $\mathcal{P}$ is defined locally on this set by $f(z)=\frac{1}{2\pi i}\log(z)$ for some branch of the logarithm. We want to compare this with the standard holomorphic projective structure on 
\[
U\cong\{z:0\leq|z|<e^{-2\pi h}\}.
\]
To do this we calculate the Schwarzian derivative $\{f(z),z\}$. It is easy to check that we have $-\{f(z),z\}dz^{\otimes2}/2=-dz^{\otimes2}/4z^2$.

Next, suppose that $\chi(S^*)=0$. If the set $\{p_1,\dots,p_k\}$ is nonempty, then the surface $S^*$ is necessarily a twice-punctured sphere. We can think of this Riemann surface as the complex plane punctured at $p_i=0$. Then the universal cover of $S^*$ is the Riemann surface on which the logarithm function is single-valued, and the charts of $\mathcal{P}$ are given by different branches of $f(z)=\log(z)$. Computing the Schwarzian derivative $\{f(z),z\}$ as before, we see that the difference $\mathcal{P}-\mathcal{P}_0|_{S^*}$ is given near any puncture by the expression $-dz^{\otimes2}/4z^2$.
\end{proof}

We will be interested in points of $\Proj^*(\mathbb{S},\mathbb{M})$ obtained by adding a meromorphic quadratic differential to a base projective structure of the type described in Lemma~\ref{lem:uniformization}.

\begin{definition}
Let $\phi$ be a meromorphic quadratic differential on a closed Riemann surface~$S$.
\begin{enumerate}
\item A \emph{marking} of $\phi$ by a marked bordered surface $(\mathbb{S},\mathbb{M})$ is an isotopy class of isomorphisms between $(\mathbb{S},\mathbb{M})$ and the marked bordered surface defined by~$\phi$.
\item A \emph{signing} for $\phi$ is defined to be a choice of sign for the residue at each pole of order two.
\end{enumerate}
\end{definition}

Let $\phi$ be a signed GMN differential on~$S$ with a marking by $(\mathbb{S},\mathbb{M})$, and assume that $\phi$ is complete and saddle-free so that $\phi$ determines an ideal triangulation of~$(\mathbb{S},\mathbb{M})$. Let $p_1,\dots,p_k$ be the poles of $\phi$ in~$S$. If $S=\mathbb{CP}^1$ and $k=1$, let us take $\mathcal{P}$ to be the standard holomorphic projective structure on~$S$. Otherwise, let $\mathcal{P}$ be the meromorphic projective structure provided by Lemma~\ref{lem:uniformization}. Consider the meromorphic projective structure given by 
\[
\mathcal{P}(\hbar)=\mathcal{P}+\frac{1}{\hbar^2}\phi
\]
for $\hbar\in\mathbb{C}^*$. Since $\phi$ is equipped with a marking, there is a natural marking for $\mathcal{P}(1)$. There is a local system of sets over~$\mathbb{C}^*$ whose fiber over $\hbar\in\mathbb{C}^*$ parametrizes the set of all markings for~$\mathcal{P}(\hbar)$. Using the flat connection of this local system, we get a marking for any~$\mathcal{P}(\hbar)$. Thus we can view $\mathcal{P}(\hbar)$ as a point of the space $\Proj(\mathbb{S},\mathbb{M})$. Note that this point will have monodromy as $\hbar$ goes around the origin.

Let $\mathcal{P}_0$ be the uniformizing projective structure for the closed surface~$S$. If $S=\mathbb{CP}^1$ and $k=1$, then we have 
\[
\mathcal{P}(\hbar)-\mathcal{P}_0=\frac{1}{\hbar^2}\phi.
\]
If we write $\phi(z)=Q_0(z)dz^{\otimes2}$ for some meromorphic function~$Q_0(z)$, then the charts of $\mathcal{P}(\hbar)$ are given by ratios of solutions to the differential equation 
\[
\hbar^2y''(z)-Q_0(z)y(z)=0,
\]
and we define $Q(z,\hbar)\coloneqq Q_0(z)$. If we do not have $S=\mathbb{CP}^1$ and $k=1$, then it follows from Lemma~\ref{lem:uniformization} that we have 
\[
\mathcal{P}(\hbar)-\mathcal{P}_0=\frac{1}{\hbar^2}\phi+\tilde{\phi}
\]
where the quadratic differential $\tilde{\phi}$ can be written in the neighborhood of a pole $p_i$ of order two as $\tilde{\phi}(z)=Q_2(z)dz^{\otimes2}$ with 
\[
Q_2(z)=-\frac{1}{4z^2}.
\]
In this case, the charts of~$\mathcal{P}(\hbar)$ are given in a neighborhood of $p_i$ by ratios of solutions to the differential equation 
\[
\label{eqn:explicitpotential}
\hbar^2y''(z)-\left(Q_0(z)+\hbar^2Q_2(z)\right)y(z)=0,
\]
and we define $Q(z,\hbar)\coloneqq Q_0(z)+\hbar^2Q_2(z)$. In each case, we see that the potential function $Q(z,\hbar)$ satisfies Assumption~\ref{assumption2.5}.

\begin{lemma}
\label{lem:eigenvalues}
Let $p$ be a pole of $\phi$ of order two, and let $\beta_p$ be a small loop surrounding~$p$. If $z$ is a local coordinate with $z(p)=0$ as above, then the monodromy of the differential equation $\hbar^2y''(z)-Q(z,\hbar)y(z)=0$ around $\beta_p$ has eigenvalues 
\[
-\exp\left(\pm\frac{1}{\hbar}\int_{\beta_p}\sqrt{Q_0}\right).
\]
\end{lemma}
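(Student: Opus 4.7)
The plan is to observe that $p$ is a regular singular point of the equation $\hbar^2 y''(z)-Q(z,\hbar)y(z)=0$, and then to read off the monodromy eigenvalues from the indicial equation. In the local coordinate $z$ with $z(p)=0$ provided by Lemma~\ref{lem:uniformization}, we have $Q(z,\hbar)=Q_0(z)+\hbar^2Q_2(z)$ with $Q_2(z)=-\frac{1}{4z^2}$, and $Q_0$ has a double pole at $p$. If we write the residue as $\Res_p(\phi)=\pm 4\pi i\sqrt{r}$, then $Q_0(z)=\frac{r}{z^2}+O(z^{-1})$ near $p$, so after dividing by $\hbar^2$ the rewritten equation $y''-\hbar^{-2}Q(z,\hbar)\,y=0$ has a coefficient with a double pole of the form
\[
\frac{Q(z,\hbar)}{\hbar^2}=\frac{1}{z^2}\left(\frac{r}{\hbar^2}-\frac{1}{4}\right)+O(z^{-1}).
\]

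The first step is therefore to write down the indicial equation at $z=0$. For an equation $y''=\bigl(C/z^2+\text{less singular}\bigr)y$, the characteristic exponents $\rho$ satisfy $\rho(\rho-1)=C$. With $C=r/\hbar^{2}-1/4$ we compute $1+4C=4r/\hbar^{2}$, and hence
\[
\rho_{\pm}=\frac{1}{2}\pm\frac{\sqrt{r}}{\hbar}.
\]
Thus the monodromy of $\beta_p$ (traversed once counterclockwise around $p$) has eigenvalues $e^{2\pi i\rho_{\pm}}=e^{i\pi}\cdot e^{\pm 2\pi i\sqrt{r}/\hbar}=-\exp(\pm 2\pi i\sqrt{r}/\hbar)$; the crucial minus sign originates from the $\tfrac{1}{2}$ in the indicial exponent, which in turn comes from the specific form $-\frac{1}{4z^2}$ of $Q_2$ guaranteed by Lemma~\ref{lem:uniformization}.

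The last step is to identify $2\pi i\sqrt{r}$ with the contour integral $\int_{\beta_p}\sqrt{Q_0}$. Since $\sqrt{Q_0(z)}=\frac{\sqrt{r}}{z}(1+O(z))$ (for an appropriate choice of square root compatible with the chosen sign of the residue), the residue theorem gives $\int_{\beta_p}\sqrt{Q_0(\zeta)}\,d\zeta=2\pi i\sqrt{r}$. Substituting this into the eigenvalue formula yields the stated expression $-\exp\!\bigl(\pm\frac{1}{\hbar}\int_{\beta_p}\sqrt{Q_0}\bigr)$.

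The only real subtlety—and the place I would be most careful—is in tracking the square root branches and signs. The symbol $\sqrt{r}$ is only defined up to a sign; choosing a branch of $\sqrt{Q_0}$ near $p$ selects one of the two exponents $\rho_{\pm}$ and simultaneously fixes the sign of $\int_{\beta_p}\sqrt{Q_0}$, so the $\pm$ on the two sides of the claimed formula correspond consistently. The fact that the Frobenius expansions converge and give honest multivalued solutions (so that the exponents really are the monodromy exponents, rather than only formal ones) is standard because $p$ is a regular singular point; this avoids any Borel resummation issue for this particular lemma.
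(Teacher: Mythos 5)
Your proof is correct and takes essentially the same approach as the paper. The paper simply cites from Allegretti--Bridgeland the closed-form eigenvalue expression $-\exp(\pm r(p)/2)$ with $r(p)=\pm 2\pi i\sqrt{1+4\lim_{z\to 0}z^2 Q(z,\hbar)/\hbar^2}$, which is exactly the indicial-exponent computation you carry out from scratch, and then both you and the paper conclude by identifying $2\pi i\sqrt{r}$ with $\int_{\beta_p}\sqrt{Q_0}$ via the residue theorem.
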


\begin{proof}
In~\cite{AllegrettiBridgeland1}, we showed that the eigenvalues of the monodromy are given by $-\exp(\pm r(p)/2)$ where 
\[
r(p)=\pm2\pi i\sqrt{1+4\lim_{z\rightarrow0}z^2\frac{Q(z,\hbar)}{\hbar^2}}.
\]
We can write 
\[
Q_0(z)=az^{-2}\left(1+O(z)\right) \quad \text{as $z\rightarrow0$},
\]
and then we have 
\[
\lim_{z\rightarrow0}z^2\frac{Q(z,\hbar)}{\hbar^2}=\frac{1}{\hbar^2}a-\frac{1}{4}
\]
so that $r(p)=\pm4\pi i\sqrt{a}/\hbar$. The lemma now follows from the residue theorem.
\end{proof}

Next, we consider the horizontal foliation defined by~$\phi$. Since $\phi$ has simple zeros, there are exactly three separating trajectories emanating from each zero. Let us choose a branch cut between each pair of consecutive separating trajectories so that these branch cuts connect the zeros on opposite sides of each horizontal strip. The diagram below shows an example of the resulting picture in a neighborhood of a pole $p$ of order two.
\[
\xy /l2pc/:
{\xypolygon5"A"{~:{(-3,0):}~>{}}},
{\xypolygon5"B"{~:{(1.8,0):}~>{}}},
{\xypolygon5"C"{~:{(-3.4,0):}~>{}}},
{\xypolygon5"D"{~:{(3,0):}~>{}}},
{\xypolygon5"E"{~:{(-1.5,0):}~>{}}},
{\xypolygon5"F"{~:{(2.2,0.3):}~>{}}},
{"A5"\PATH~={**@{-}}"B2"},
{"A5"\PATH~={**@{-}}"B3"},
{"A4"\PATH~={**@{-}}"B1"},
{"A4"\PATH~={**@{-}}"B2"},
{"A3"\PATH~={**@{-}}"B5"},
{"A3"\PATH~={**@{-}}"B1"},
{"A2"\PATH~={**@{-}}"B4"},
{"A2"\PATH~={**@{-}}"B5"},
{"A1"\PATH~={**@{-}}"B3"},
{"A1"\PATH~={**@{-}}"B4"},
(1,0)*{\bullet}="0";
{"0"\PATH~={**@{-}}"B1"},
{"0"\PATH~={**@{-}}"B2"},
{"0"\PATH~={**@{-}}"B3"},
{"0"\PATH~={**@{-}}"B4"},
{"0"\PATH~={**@{-}}"B5"},
{"B1"\PATH~={**@{~}}'"D1"},
{"B2"\PATH~={**@{~}}'"D2"},
{"B3"\PATH~={**@{~}}'"D3"},
{"B4"\PATH~={**@{~}}'"D4"},
{"B5"\PATH~={**@{~}}'"D5"},
{"B1"\PATH~={**@{~}}'"B2"},
{"B2"\PATH~={**@{~}}'"B3"},
{"B3"\PATH~={**@{~}}'"B4"},
{"B4"\PATH~={**@{~}}'"B5"},
{"B5"\PATH~={**@{~}}'"B1"},
"A1"*{\bullet},
"A2"*{\bullet},
"A3"*{\bullet},
"A4"*{\bullet},
"A5"*{\bullet},
"B1"*{\times},
"B2"*{\times},
"B3"*{\times},
"B4"*{\times},
"B5"*{\times},
"C1"*{\oplus},
"C2"*{\oplus},
"C3"*{\oplus},
"C4"*{\oplus},
"C5"*{\oplus},
(1.4,-0.1)*{p};
(1,-0.5)*{\oplus};
\endxy
\]
In such a neighborhood, let us take the first sheet of the spectral cover $\Sigma_\phi$ to be the sheet determined by the $\oplus$ sign, that is, the sheet where all trajectories are oriented into the pole. The local coordinate on this first sheet is a function which we will take as our $\sqrt{Q_0}$.

Since we assume that $\phi$ is equipped with a signing, we have a choice of sign for the residue at each pole of order two. Note that the residue at a pole $p$ is equal to $\pm2\int_{\beta_p}\sqrt{Q_0}$ where $\beta_p$ is a small counterclockwise oriented loop surrounding~$p$.

\begin{definition}
Let $\phi$ be a complete saddle-free signed GMN differential. Then the \emph{signed WKB triangulation} $(T,\epsilon)$ is defined as the WKB triangulation $T$ of~$\phi$ equipped with the signing $\epsilon:\mathbb{P}\rightarrow\{\pm1\}$ defined by the following rule: We set $\epsilon(p)=+1$ if the residue at $p$ is $-2\int_{\beta_p}\sqrt{Q_0}$ and set $\epsilon(p)=-1$ otherwise.
\end{definition}

Note that we could have defined the function $\sqrt{Q_0}$ by taking the $\ominus$ sign at each pole of order two in the above diagram. In this case, one should set $\epsilon(p)=+1$ if the residue at $p$ is $2\int_{\beta_p}\sqrt{Q_0}$ and set $\epsilon(p)=-1$ otherwise. It is easy to check that our definition of the signed WKB triangulation is equivalent to the definition given in Section~10 of~\cite{BridgelandSmith}.

We can now state and prove our main theorem.

\begin{theorem}
\label{thm:main}
There exists $\varepsilon>0$ such that 
\begin{enumerate}
\item For all points $\hbar\in\mathbb{H}(\varepsilon)$, the Fock-Goncharov coordinates of $F(\mathcal{P}(\hbar))$ with respect to the signed WKB triangulation of~$\phi$ are well defined.
\item Taking the Fock-Goncharov coordinate associated to an arc of the signed WKB triangulation gives a holomorphic map $\mathbb{H}(\varepsilon)\rightarrow\mathbb{C}^*$ which agrees with the Borel sum of the corresponding Voros symbol.
\end{enumerate}
\end{theorem}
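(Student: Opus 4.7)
The plan is to reduce Theorem~\ref{thm:main} to the cross ratio computation already performed in Section~\ref{sec:CalculationOfCrossRatios} for the (unsigned) WKB triangulation. The new content, relative to that computation, is to identify the framing that $F$ attaches to $\mathcal{P}(\hbar)$ at each marked point with a specific Borel-summed WKB solution, to check that this identification is compatible at punctures with the signing of the signed WKB triangulation, and to transfer the analytic regularity in $\hbar$ from Theorem~\ref{thm:Borelsummability}.

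First I would apply Theorem~\ref{thm:Borelsummability}, using the remark following Lemma~\ref{lem:eigenvalues} that the potential $Q(z,\hbar)$ attached to $\mathcal{P}(\hbar)$ satisfies Assumption~\ref{assumption2.5}. Shrinking $\varepsilon$ once for each of the finitely many horizontal strips and half planes of $\phi$, I obtain an $\varepsilon>0$ such that on each such region the Borel-summed WKB solutions $\Psi_{\pm}^{(p,a)}$ are genuine holomorphic solutions of $\mathcal{D}_\hbar y=0$ depending holomorphically on $\hbar\in\mathbb{H}(\varepsilon)$.

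Second, I would identify the framing of $F(\mathcal{P}(\hbar))$ at each marked point with the line spanned by such a Borel-summed WKB solution. At a marked point on $\partial\mathbb{S}$, corresponding to a distinguished tangent direction at a pole $p$ of order $\geq 3$, the framing is defined as the unique-up-to-scale subdominant solution in the corresponding sector (Proposition~\ref{prop:subdominantexists}); Lemma~\ref{lem:canonicalcoordinatesubdominant} shows that whichever of $\Psi_{\pm}^{(p,a)}$ decreases along a generic trajectory into $p$ must span this framing line. At a puncture $p$, the framing is one of the two eigenlines of the monodromy around $p$, chosen as part of the data of $\mathcal{P}(\hbar)\in\Proj^*(\mathbb{S},\mathbb{M})$. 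Proposition~\ref{prop:eigenvectormonodromy} together with Lemma~\ref{lem:eigenvalues} identifies these eigenlines with the spans of $\Psi_{+}^{(p,a)}$ and $\Psi_{-}^{(p,a)}$ along with matching eigenvalues, and the definition of the signed WKB triangulation is calibrated precisely so that, after acting at each puncture by the element $\epsilon\in(\mathbb{Z}/2\mathbb{Z})^{\mathbb{P}}$, the effective framing is the one whose Borel sum plays the role of $\psi_2$ or $\psi_4$ in the notation of Lemma~\ref{lem:foursubdominants} for every quadrilateral of the triangulation.

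Third, with the framings now matched to Borel sums of WKB solutions normalized at the correct poles, Proposition~\ref{prop:regularcrossratio} computes the Fock-Goncharov coordinate of each regular arc as $\mathcal{S}[e^{V_{\gamma_j}}]$, while the internal edge of a self-folded triangle is handled by the homological decomposition $\tilde\gamma_j=\tilde\gamma_j^{(1)}+\tilde\gamma_j^{(2)}$ used in the proof of the unsigned analogue of the theorem at the end of Section~\ref{sec:CalculationOfCrossRatios}. This yields part~(2). Well-definedness in part~(1) follows because the relevant Borel sums are nowhere zero on $\mathbb{H}(\varepsilon)$, being products of exponentials of holomorphic functions with nonvanishing prefactors, and holomorphicity in $\hbar$ is inherited directly from Theorem~\ref{thm:Borelsummability}. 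The principal obstacle is the bookkeeping in the second step: keeping track of how the choice of branch of $\sqrt{Q_0}$, the $\oplus/\ominus$ labeling of preimages in $\Sigma_\phi$, and the orientation of the first sheet propagate through Lemmas~\ref{lem:canonicalcoordinatesubdominant} and~\ref{lem:eigenvalues} to determine which of $\Psi_{\pm}^{(p,a)}$ spans the framing at each puncture, and then verifying that this matches the signing in the signed WKB triangulation exactly so that the cross ratio computation of Section~\ref{sec:CalculationOfCrossRatios} applies without modification.
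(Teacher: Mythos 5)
Your proposal is correct and follows essentially the same route as the paper: identify the framing of $\epsilon\cdot F(\mathcal{P}(\hbar))$ at each marked point with the span of a Borel-summed WKB solution (subdominant near high-order poles via Lemma~\ref{lem:canonicalcoordinatesubdominant}, the correct monodromy eigenline near punctures via Proposition~\ref{prop:eigenvectormonodromy} and Lemma~\ref{lem:eigenvalues}), then invoke the cross-ratio computations of Section~\ref{sec:CalculationOfCrossRatios}. The branch/sign bookkeeping you flag as the principal obstacle is handled in the paper by the single observation that $\psi_-$ and $\psi_+$ span the same framing line after the $\oplus\leftrightarrow\ominus$ branch swap, so your structure is sound and your candor about where the care is needed is well placed.
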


\begin{proof}
The choice of signed differential determines an eigenvalue of the monodromy around a pole of order two by Lemma~\ref{lem:eigenvalues}. This defines the framing of $F(\mathcal{P}(\hbar))$ near each pole of order two. Near a pole of order $\geq3$, the framing is defined by subdominant solutions as we have explained. Consider the framed local system $\epsilon\cdot F(\mathcal{P}(\hbar))$ where we regard the signing~$\epsilon$ as an element of the group $(\mathbb{Z}/2\mathbb{Z})^{\mathbb{P}}$. By construction, the framing near any puncture is an eigenline of the monodromy around the puncture with eigenvalue $\exp\left(-\frac{1}{\hbar}\int_{\beta_p}\sqrt{Q_0}\right)$. By Proposition~\ref{prop:eigenvectormonodromy}, the Borel sum of the WKB solution $\psi_-$ is a nonzero vector in this eigenline. Similarly, by Lemma~\ref{lem:canonicalcoordinatesubdominant}, the Borel sum of $\psi_-$ is subdominant near a pole of order $\geq3$. Note that the framing line determined by $\psi_-$ is the same as the line determined by $\psi_+$ if we change the branch by replacing a $\oplus$ sign by $\ominus$. The theorem therefore follows from the computations of Subsection~\ref{sec:CalculationOfCrossRatios}.
\end{proof}

To prove Theorem~\ref{thm:meromorphic} from the introduction, we begin with some lemmas.

\begin{lemma}
\label{lem:discreteset}
Let $\mathcal{V}$ be the set of all $\hbar\in\mathbb{C}^*$ such that $\mathcal{P}(\hbar)$ has an apparent singularity at some pole of the differential~$\phi$. Then $\mathcal{V}$ has no accumulation point in~$\mathbb{C}^*$.
\end{lemma}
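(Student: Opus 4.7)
The plan is to work pole-by-pole. Let $p_1,\dots,p_k$ be the poles of $\phi$ on $S$, and set $\mathcal{V}_i = \{\hbar \in \mathbb{C}^* : \mathcal{P}(\hbar) \text{ has an apparent singularity at } p_i\}$, so that $\mathcal{V} = \mathcal{V}_1 \cup \cdots \cup \mathcal{V}_k$ is a finite union. It therefore suffices to prove that each $\mathcal{V}_i$ is discrete in $\mathbb{C}^*$.

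The key observation is that, although $\hbar \mapsto \mathcal{P}(\hbar)$ is only multi-valued when viewed as a map into $\Proj(\mathbb{S},\mathbb{M})$ (because the marking is transported by a flat connection with nontrivial monodromy around the origin), the underlying projective structure $\mathcal{P} + \hbar^{-2}\phi$ on the fixed Riemann surface $S$ is itself single-valued in $\hbar \in \mathbb{C}^*$, since $\hbar^{-2}$ is. With the geometric pole $p_i \in S$ fixed, working in a local trivialization of solutions to~\eqref{eqn:schrodinger} near $p_i$ that does not depend on $\hbar$, the monodromy $M_{p_i}(\hbar) \in PGL_2(\mathbb{C})$ around $p_i$ becomes a holomorphic function of $\hbar$ on any simply-connected subdomain of $\mathbb{C}^*$. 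The condition $M_{p_i}(\hbar) = I$ in $PGL_2(\mathbb{C})$ is then locally cut out by finitely many analytic equations --- after lifting $M_{p_i}$ to $SL_2(\mathbb{C})$ the condition becomes $M_{p_i} = \pm I$ --- so $\mathcal{V}_i$ is a closed analytic subset of $\mathbb{C}^*$.

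Since $\mathbb{C}^*$ is a connected one-dimensional complex manifold, any closed analytic subset is either discrete or all of $\mathbb{C}^*$. The latter possibility would mean that $\mathcal{P}(\hbar)$ has an apparent singularity at $p_i$ for every $\hbar$, which contradicts Theorem~\ref{thm:main}: the conclusion that the Fock--Goncharov coordinates of $F(\mathcal{P}(\hbar))$ are well-defined for $\hbar \in \mathbb{H}(\varepsilon)$ presupposes $\mathcal{P}(\hbar) \in \Proj^*(\mathbb{S},\mathbb{M})$, and in particular that $\mathcal{P}(\hbar)$ has no apparent singularities. Hence each $\mathcal{V}_i$ is discrete, and so is the finite union $\mathcal{V}$. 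The main delicate step is the setup identifying the apparent-singularity locus as a closed analytic subset of the parameter space $\mathbb{C}^*$; once this is established, the one-dimensional dichotomy together with Theorem~\ref{thm:main} closes the argument.
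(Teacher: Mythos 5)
Your proof is correct, and it reaches the conclusion by a genuinely different route from the paper. The paper's proof is short and quantitative: it invokes Lemma~\ref{lem:eigenvalues} to say that the $SL_2(\mathbb{C})$-normalized monodromy around a pole $p$ of order two has eigenvalues $-\exp\bigl(\pm\frac{1}{\hbar}\int_{\beta_p}\sqrt{Q_0}\bigr)$, observes that an apparent singularity forces these to equal $\pm1$, and notes that perturbing $\hbar$ immediately destroys this condition since $c=\int_{\beta_p}\sqrt{Q_0}\neq 0$ (so the offending set is literally $\{c/(\pi i n):n\in\mathbb{Z}\setminus\{0\}\}$, which accumulates only at the origin). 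Your proof instead packages the condition abstractly: you observe that the monodromy around each pole is holomorphic in $\hbar$ on simply connected subdomains, so that $\mathcal{V}_i$ is a closed analytic subset of the one-dimensional connected manifold $\mathbb{C}^*$, and then use the discrete-or-everything dichotomy together with Theorem~\ref{thm:main} to rule out $\mathcal{V}_i=\mathbb{C}^*$. This is sound --- Theorem~\ref{thm:main} appears earlier in the logical order and does imply $\mathbb{H}(\varepsilon)\cap\mathcal{V}=\emptyset$ --- but it trades the paper's elementary, self-contained argument for an import of Theorem~\ref{thm:main}, which rests on the Borel summability machinery. What your approach buys is that it does not need the explicit eigenvalue formula, so it would more readily extend to a setting where no closed form is available; what it costs is that the resulting statement is qualitative rather than pinpointing the location of $\mathcal{V}$. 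One small caveat (shared, implicitly, by the paper's own proof): both arguments really address apparent singularities at poles of order two; the case of poles of order $\geq 3$ (irregular singularities) is left tacit, and your holomorphy claim for $M_{p_i}(\hbar)$ would need a word of justification there, though it is true since the monodromy is computed by analytic continuation away from the singular point.
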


\begin{proof}
Let $p$ be a pole of the differential $\phi$. If $\beta_p$ is a small loop surrounding~$p$, then the monodromy around $\beta_p$, normalized to lie in $SL_2(\mathbb{C})$, has eigenvalues given by Lemma~\ref{lem:eigenvalues}. If $p$ is an apparent singularity of~$\mathcal{P}(\hbar)$ so that $\hbar\in\mathcal{V}$, then these eignvalues must equal $\pm1$. It is clear from the expression in Lemma~\ref{lem:eigenvalues} that if we perturb $\hbar$ by a small amount, then the eigenvalues will no longer have this property. The lemma follows.
\end{proof}

\begin{lemma}
\label{lem:extendfunction}
For each arc $j$ of the signed WKB triangulation of~$\phi$, the function 
\[
\mathcal{Y}_j:\hbar\mapsto X_j(F(\mathcal{P}(\hbar)))
\]
is a multivalued meromorphic continuation of $\mathcal{S}[e^{V_j}]$ from $\mathbb{H}(\varepsilon)$ to~$\mathbb{C}^*\setminus\mathcal{V}$ where $\mathcal{V}$ is the discrete set defined in Lemma~\ref{lem:discreteset}. It is branched only at the origin.
\end{lemma}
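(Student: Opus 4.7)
The plan is to decompose $\mathcal{Y}_j = X_j \circ F \circ (\hbar \mapsto \mathcal{P}(\hbar))$ and argue that each factor extends appropriately from $\mathbb{H}(\varepsilon)$ to $\mathbb{C}^* \setminus \mathcal{V}$.

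First, I would observe that the underlying meromorphic projective structure $\mathcal{P} + \hbar^{-2}\phi$ depends holomorphically on $\hbar \in \mathbb{C}^*$, since $\phi$ is a fixed meromorphic quadratic differential and $\hbar^{-2}$ is holomorphic on $\mathbb{C}^*$. The marking was fixed at $\hbar = 1$ and propagated to other values by the flat connection of the local system of markings over $\mathbb{C}^*$; consequently the assignment $\hbar \mapsto \mathcal{P}(\hbar) \in \Proj(\mathbb{S}, \mathbb{M})$ is a multivalued holomorphic map, branched only at the origin. By definition of $\mathcal{V}$, its image lies in the open subset $\Proj^\circ(\mathbb{S}, \mathbb{M})$ precisely when $\hbar \in \mathbb{C}^* \setminus \mathcal{V}$.

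Next I would lift this to $\Proj^*(\mathbb{S}, \mathbb{M})$ using the signing $\epsilon$. By Lemma~\ref{lem:eigenvalues}, the normalized monodromy of $\mathcal{P}(\hbar)$ around a pole $p$ of order two has eigenvalues $-\exp(\pm\tfrac{1}{\hbar}\int_{\beta_p}\sqrt{Q_0})$; outside the discrete set $\mathcal{V}$ of Lemma~\ref{lem:discreteset} these are distinct, so the two eigenlines depend holomorphically on $\hbar$, and the signing selects one of them at each puncture. This yields a multivalued holomorphic lift $\mathbb{C}^* \setminus \mathcal{V} \to \Proj^*(\mathbb{S}, \mathbb{M})$, still branched only at the origin. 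Composing with the holomorphic generalized monodromy map $F$ of Theorem~\ref{thm:AllegrettiBridgeland} and the rational Fock--Goncharov coordinate $X_j$ on $\mathcal{X}(\mathbb{S}, \mathbb{M})$ then produces a multivalued meromorphic function on $\mathbb{C}^* \setminus \mathcal{V}$, branched only at the origin, which by Theorem~\ref{thm:main} coincides with $\mathcal{S}[e^{V_j}]$ on $\mathbb{H}(\varepsilon)$.

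The main obstacle will be ensuring that the signed lift to $\Proj^*(\mathbb{S}, \mathbb{M})$ is holomorphic on all of $\mathbb{C}^* \setminus \mathcal{V}$. This reduces to verifying that the two eigenvalues in Lemma~\ref{lem:eigenvalues} remain distinct throughout $\mathbb{C}^* \setminus \mathcal{V}$, so that the eigenline corresponding to $\epsilon(p)$ can be tracked continuously as $\hbar$ varies; this in turn follows from the characterization of $\mathcal{V}$ as precisely the locus where those eigenvalues collide to $\pm 1$.
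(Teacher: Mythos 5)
There is a genuine gap in the key step. You assert that ``composing with the holomorphic generalized monodromy map $F$ ... and the rational Fock--Goncharov coordinate $X_j$ ... then produces a multivalued meromorphic function,'' but this does not follow automatically. The coordinate $X_j$ is a \emph{rational} function on the stack $\mathcal{X}(\mathbb{S},\mathbb{M})$, so the composite $X_j\circ F\circ\mathcal{P}$ could a priori be identically undefined: the image of $F\circ\mathcal{P}$ along a path in $\mathbb{C}^*\setminus\mathcal{V}$ could run into the indeterminacy locus of $X_j$ (equivalently, a locus where a denominator vanishes identically on the image). Knowing that $F$ is holomorphic into $\mathcal{X}(\mathbb{S},\mathbb{M})$ is not the same as knowing that the scalar function $X_j\circ F\circ\mathcal{P}$ is meromorphic. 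The paper spends the bulk of the proof on exactly this point: take a path $\gamma$ from $\mathbb{H}(\varepsilon)$ to an arbitrary $\hbar\in\mathbb{C}^*\setminus\mathcal{V}$, cover $\gamma([0,1])$ by finitely many neighborhoods $U_{t_i}$ on each of which some tagged triangulation $\tau_{t_i}$ gives \emph{regular} coordinates $X^{(i)}_1,\dots,X^{(i)}_n$ at $F(\mathcal{P}(\gamma(t)))$, write $X_j = p_i/q_i$ in those coordinates using that transition maps are birational, and then argue by induction along the chain (seeded by the regularity of $X_j$ on $\mathbb{H}(\varepsilon)$ from Theorem~\ref{thm:main}) that $q_i$ does not vanish identically on $F(\mathcal{P}(U_{t_i}))$. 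Without this continuation argument there is no reason the denominator survives all the way to $U_{t_N}$, and meromorphicity near $\hbar$ is not established.

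A second, smaller inaccuracy: you claim the two eigenvalues of the monodromy around a pole of order two ``are distinct'' for all $\hbar\in\mathbb{C}^*\setminus\mathcal{V}$. That is not quite right. From Lemma~\ref{lem:eigenvalues} the eigenvalues collide exactly when $\exp\bigl(\tfrac{2}{\hbar}\int_{\beta_p}\sqrt{Q_0}\bigr)=1$, and at such a point the $PGL_2$-monodromy may still be nontrivial (parabolic), so $\hbar$ need not lie in $\mathcal{V}$. The correct statement is that $\mathcal{V}$ is where the monodromy becomes \emph{trivial} in $PGL_2$, not where the eigenvalues merely coincide; tracking the framing therefore cannot rely solely on distinctness of eigenvalues. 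The paper sidesteps this issue in the present lemma (it only needs regularity of \emph{some} chart, which is given), and handles the more delicate extension across $\mathcal{V}$ itself separately in the proof of Theorem~\ref{thm:meromorphic} by extending the eigenline formula $\ell=-b/a$ or $\ell=-c/d$ holomorphically.
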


\begin{proof}
By Theorem~\ref{thm:main}, we know that these functions coincide on $\mathbb{H}(\varepsilon)$, so we only need to check that $\mathcal{Y}_j$ is meromorphic near any point of~$\mathbb{C}^*\setminus\mathcal{V}$. Consider any point $\hbar\in\mathbb{C}^*\setminus\mathcal{V}$ and choose a path $\gamma:[0,1]\rightarrow\mathbb{C}^*\setminus\mathcal{V}$ connecting $\mathbb{H}(\varepsilon)$ to this point. Any point in the image of~$F$ is a regular point of some coordinate chart, so for every point $\gamma(t)$ on the path, there is some tagged triangulation~$\tau_t$ such that $X_{\tau_t}$ is regular at $F(\mathcal{P}(\gamma(t)))$. In particular, the map $X_{\tau_t}\circ F\circ\mathcal{P}$ is holomorphic in a neighborhood $U_t$ of $\gamma(t)$ in~$\mathbb{C}^*\setminus\mathcal{V}$. These sets $U_t$ define an open cover of $\gamma([0,1])$, and since this image is compact, we can choose finitely many points 
\[
0=t_0<t_1<t_2<\dots<t_N=1
\]
so that $\{U_{t_i}\}$ is an open cover of $\gamma([0,1])$. We can assume that $U_{t_0}$ is the domain $\mathbb{H}(\varepsilon)$ and $U_{t_N}$ is a neighborhood of~$\hbar$. Since the transition maps for the Fock-Goncharov coordinates are birational, we can write $X_j$ as a rational function 
\[
X_j=\frac{p_i(X_1^{(i)},\dots,X_n^{(i)})}{q_i(X_1^{(i)},\dots,X_n^{(i)})}
\]
where $X_1^{(i)},\dots,X_n^{(i)}$ are the Fock-Goncharov coordinates associated to the tagged triangulation $\tau_{t_i}$. The domain $U_{t_1}$ intersects $U_{t_0}=\mathbb{H}(\varepsilon)$. Since $\mathcal{Y}_j$ is holomorphic on $\mathbb{H}(\varepsilon)$, we know that $X_j$ is regular on $F(\mathcal{P}(U_{t_0}\cap U_{t_1}))$. It follows that $q_1$ does not vanish identically on $F(\mathcal{P}(U_{t_1}))$. Therefore $X_j$ is regular on a nonempty open subset of $F(\mathcal{P}(U_{t_1}\cap U_{t_2}))$, and $q_2$ does not vanish identically on $F(\mathcal{P}(U_{t_2}))$. Continuing in this manner, we see that $q_N$ does not vanish identically on $F(\mathcal{P}(U_{t_N}))$. This means that $\mathcal{Y}_j$ is expressible as a ratio of holomorphic functions on~$U_{t_N}$ where the denominator has isolated zeros. Hence $\mathcal{Y}_j$ is meromorphic in a neighborhood of~$\hbar$. The multivaluedness of this function arises from the multivaluedness of $\mathcal{P}(\hbar)\in\Proj(\mathbb{S},\mathbb{M})$.
\end{proof}

The statement of Theorem~\ref{thm:meromorphic} is thus equivalent to the following.

\begin{theorem}
For each arc $j$ of the signed WKB triangulation of~$\phi$, the function $\mathcal{Y}_j$ extends to a multivalued meromorphic function on~$\mathbb{C}^*$, branched only at the origin.
\end{theorem}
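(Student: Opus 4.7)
The plan is to bootstrap from Lemmas \ref{lem:discreteset} and \ref{lem:extendfunction}: the first says the exceptional locus $\mathcal{V}$ is discrete in $\mathbb{C}^*$, and the second says $\mathcal{Y}_j$ is already a multivalued meromorphic function on $\mathbb{C}^*\setminus\mathcal{V}$, branched only at the origin. It therefore suffices to fix a point $\hbar_0\in\mathcal{V}$, a small open disk $U$ around $\hbar_0$, and a single branch of $\mathcal{P}(\hbar)$ on $U$, and to show that the corresponding branch of $\mathcal{Y}_j$ extends meromorphically across $\hbar_0$. Since there is no new branching introduced in this local argument, the global conclusion about branching only at the origin is preserved.

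The idea is that the framed local system $\epsilon\cdot F(\mathcal{P}(\hbar))$ itself extends holomorphically across $\hbar_0$, even though $\mathcal{P}(\hbar)$ has an apparent singularity there. First, the monodromy representation $\rho_\hbar:\pi_1(\mathbb{S}^*)\to PGL_2(\mathbb{C})$ of $\mathcal{P}(\hbar)$ depends holomorphically on $\hbar$ throughout $U$, since the potential $Q(z,\hbar)$ in the Schr\"odinger equation depends holomorphically on $\hbar$ and solutions of linear ODEs depend holomorphically on parameters. Moreover, the Wronskian of two solutions is constant in $z$, so $\rho_\hbar$ naturally lifts to a holomorphic family of $SL_2(\mathbb{C})$-valued representations on $U$. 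Second, the framing line at any pole $p$ of order $\geq 3$ is spanned by the subdominant solution of Proposition \ref{prop:subdominantexists}, which again depends holomorphically on $\hbar$ on all of $U$.

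The substantive step is the framing at a pole $p$ of order two. Here the signed framing is the eigenline of $M(\hbar):=\rho_\hbar(\delta_p)$ for the eigenvalue $\mu(\hbar)=-\exp(\pm\hbar^{-1}\int_{\beta_p}\sqrt{Q_0})$ prescribed by the signing and computed in Lemma \ref{lem:eigenvalues}. Set $N(\hbar):=M(\hbar)-\mu(\hbar)I$. The function $\mu(\hbar)$ is entire in $\hbar^{-1}$, so $N(\hbar)$ is a holomorphic family of $2\times 2$ matrices on $U$ whose kernel is the framing line for $\hbar\ne\hbar_0$. The defining property of $\mathcal{V}$ together with Lemma \ref{lem:eigenvalues} forces $M(\hbar_0)=\mu(\hbar_0)I$, hence $N(\hbar_0)=0$; factoring $N(\hbar)=(\hbar-\hbar_0)^kN_1(\hbar)$ with $k$ maximal, the matrix $N_1(\hbar_0)$ is nonzero and cannot be invertible, since otherwise $\mu(\hbar)$ would fail to be an eigenvalue of $M(\hbar)$ on a deleted neighborhood of $\hbar_0$. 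Hence $\ker N_1(\hbar)$ is a one-dimensional subspace depending holomorphically on $\hbar$ in $U$, extending the framing line through $\hbar_0$. With all framing data so extended, $\epsilon\cdot F(\mathcal{P}(\hbar))$ becomes a holomorphic map $U\to\mathcal{X}(\mathbb{S},\mathbb{M})$, and $\mathcal{Y}_j=X_j\circ F\circ\mathcal{P}$ is the pullback of a rational function along a holomorphic map, hence meromorphic on $U$.

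The main obstacle is the rank-reduction step for $N(\hbar)$: one must be sure that after dividing out the common zero of the matrix entries at $\hbar_0$, the residual matrix $N_1(\hbar_0)$ really has a one-dimensional kernel, so that the framing line has a genuine holomorphic limit. The sketch above uses only the $2\times 2$ size of the matrices and the fact that $\mu(\hbar)$ is a genuine eigenvalue for $\hbar$ close to $\hbar_0$; executing this carefully, and then confirming that the extended framed local system lands in the stratum of $\mathcal{X}(\mathbb{S},\mathbb{M})$ on which $X_j$ is at worst meromorphic, is the one point of the argument that requires real care. Poles of $X_j\circ F\circ\mathcal{P}$ at $\hbar_0$ are permitted by the statement of the theorem, so no further work beyond this meromorphicity is needed.
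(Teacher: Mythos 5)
Your proposal is correct and follows the same overall strategy as the paper's: show that the framing data of $\epsilon\cdot F(\mathcal{P}(\hbar))$ extends holomorphically across each $\hbar_0\in\mathcal{V}$, then conclude that $\mathcal{Y}_j$, being a rational expression in holomorphically varying framing lines, is meromorphic there. The genuine difference lies in how the eigenline at an order-two pole is extended across $\hbar_0$. The paper writes $M(\hbar)-\lambda(\hbar)I$ with entries $a,b,c,d$, invokes remarks following Lemma~5.1 of~\cite{AllegrettiBridgeland1} to conclude that $a$ and $d$ are not both identically zero, and reads off the extended eigenline from one of the affine formulas $-b/a$ or $-c/d$. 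You instead factor $N(\hbar)=M(\hbar)-\mu(\hbar)I=(\hbar-\hbar_0)^kN_1(\hbar)$ with $k$ maximal, observe that $N_1(\hbar_0)\neq0$ while $\det N_1\equiv0$ (since $\mu(\hbar)$ remains a genuine eigenvalue on a deleted neighborhood), so $N_1(\hbar_0)$ has rank exactly one, and take the kernel line of $N_1(\hbar)$. This is more self-contained---it needs only the $2\times 2$ size of the matrix and persistence of the eigenvalue, not the explicit diagonal form cited from the companion paper---and it avoids any case analysis on which matrix entry is nonvanishing. Two small things to tidy: the equality $N(\hbar_0)=0$ holds only when $p$ itself is the apparent singularity at $\hbar_0$; for other order-two poles one has $k=0$ and there is nothing to do. And your last step, that the pullback of a rational function along a holomorphic map is meromorphic, is justified because Lemma~\ref{lem:extendfunction} already gives meromorphicity of $\mathcal{Y}_j$ on $U\setminus\{\hbar_0\}$, which rules out the holomorphic map $\hbar\mapsto\mathcal{F}(\hbar)$ landing entirely in the indeterminacy locus of $X_j$---exactly the implicit appeal the paper makes when concluding that $X_j(\mathcal{F}(\hbar))$ is a ratio of holomorphic functions with at worst a pole at $\hbar_0$.
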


\begin{proof}
Since $\mathcal{V}$ is discrete, we can choose a disk $D$ around any~$\hbar_0\in\mathcal{V}$ such that $D$ contains no other point of~$\mathcal{V}$. Consider a pole $p$ of $\phi$ of order two. The monodromy $M$ around~$p$ depends holomorphically on $\hbar\in D$. For $\hbar\in D\setminus\{\hbar_0\}$, let $\lambda(\hbar)$ denote the distinguished eigenvalue of the monodromy determined by the signing. Then by Lemma~\ref{lem:eigenvalues}, $\lambda$ extends to a holomorphic function on the domain~$D$. It follows that the entries of the matrix 
\[
M(\hbar)-\lambda(\hbar)=\left(\begin{array}{cc} a(\hbar) & b(\hbar) \\ c(\hbar) & d(\hbar)\end{array}\right)
\]
are holomorphic functions on~$D$. We claim that the diagonal entries of this matrix are not both identically zero. Indeed, for a general $\hbar\in D\setminus\{\hbar_0\}$ the remarks following Lemma~5.1 of~\cite{AllegrettiBridgeland1} imply that monodromy matrix $M(\hbar)$ is diagonal with distinct eigenvalues. It follows that after subtracting the scalar matrix $\lambda(\hbar)$ we get a matrix which again has distinct diagonal entries. This proves the claim. Now the $\lambda(\hbar)$-eigenspace of $M(\hbar)$ is 1-dimensional, and a vector $v=(v_1(\hbar),v_2(\hbar))^t$ in this eigenspace satisfies the conditions 
\[
a(\hbar)v_1(\hbar)+b(\hbar)v_2(\hbar)=0, \quad c(\hbar)v_1(\hbar)+d(\hbar)v_2(\hbar)=0.
\]
Let $\ell_p:D\setminus\{\hbar_0\}\rightarrow\mathbb{CP}^1$ be the map giving the distinguished eigenline at~$p$. Then there is an affine chart on~$\mathbb{CP}^1$ in which $\ell_p$ is given by one of the formulas $\ell(\hbar)=-b(\hbar)/a(\hbar)$ or $\ell(\hbar)=-c(\hbar)/d(\hbar)$, depending on which of the denominators is not identically zero. Thus $\ell_p$ extends to a holomorphic map $\ell_p:D\rightarrow\mathbb{CP}^1$ given by the same formulas. We can therefore associate a framed local system $\mathcal{F}(\hbar)$ to each $\hbar\in D$. This framed local system is defined as the monodromy local system of the projective structure $\mathcal{P}(\hbar)$ with a framing given by the lines $\ell_p(\hbar)$ near a pole $p$ of order two, and given by subdominant solutions near a higher order pole. In particular, we have $\mathcal{F}(\hbar)=F(\mathcal{P}(\hbar))$ for all $\hbar\in D\setminus\{\hbar_0\}$. Now the cluster coordinate $X_j(\mathcal{F}(\hbar))$ is given in terms of cross ratios of the framing lines which depend holomorphically on~$\hbar$ by the above argument. Therefore it is a ratio of holomorphic functions and has at worst a pole at~$\hbar_0$.
\end{proof}

Finally, we prove Theorem~\ref{thm:asymptotics} from the introduction.

\begin{theorem}
For each arc $j$ of the signed WKB triangulation of~$\phi$, we have 
\[
\mathcal{Y}_j(\hbar)\cdot\exp(Z_{\gamma_j}/\hbar)\rightarrow1
\]
as $\hbar\rightarrow0$, $\Re(\hbar)>0$.
\end{theorem}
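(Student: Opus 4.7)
The plan is to use Theorem~\ref{thm:main} to replace $\mathcal{Y}_j(\hbar)$ by the Borel sum $\mathcal{S}[e^{V_{\gamma_j}}](\hbar)$ on the domain $\mathbb{H}(\varepsilon)$ and then to extract the $\hbar^{-1}$ part of the Voros exponent. Splitting $S_{\text{odd}}(z,\hbar) = \hbar^{-1}\sqrt{Q_0(z)} + S_{\text{odd}}^{\text{reg}}(z,\hbar)$ and integrating around $\gamma_j$,
\[
V_{\gamma_j} = \frac{1}{\hbar}\oint_{\gamma_j}\sqrt{Q_0(z)}\,dz + W_{\gamma_j}, \qquad W_{\gamma_j} \coloneqq \oint_{\gamma_j} S_{\text{odd}}^{\text{reg}}(z,\hbar)\,dz,
\]
so that, by the definition of the Borel sum for a series carrying an $e^{s/\hbar}$ prefactor,
\[
\mathcal{Y}_j(\hbar) = \exp\!\left(\frac{1}{\hbar}\oint_{\gamma_j}\sqrt{Q_0}\,dz\right) \cdot \mathcal{S}[e^{W_{\gamma_j}}](\hbar).
\]

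The next step is to match $\oint_{\gamma_j}\sqrt{Q_0}\,dz$ with $-Z_{\gamma_j}$. The square root is determined by the first sheet of $\Sigma_\phi$ introduced in Section~\ref{sec:FoliationOfTheSpectralCover}, where horizontal trajectories point from $\ominus$- toward $\oplus$-labeled preimages of the poles, and the orientation of $\gamma_j$ is fixed by the diagrams in that section. In a regular horizontal strip the distinguished coordinate $w = \int\sqrt{Q_0}\,dz$ maps isomorphically onto a strip $\{a < \Im(w) < b\}$, which pins down the sign of $\Im\bigl(\oint_{\gamma_j}\sqrt{Q_0}\,dz\bigr)$; the convention $\Im(Z_{\gamma_j}) > 0$ in the theorem then forces $\oint_{\gamma_j}\sqrt{Q_0}\,dz = -Z_{\gamma_j}$. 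The self-folded case is handled identically via the decomposition $\tilde\gamma_j = \tilde\gamma_j^{(1)} + \tilde\gamma_j^{(2)}$ from the proof of Theorem~\ref{thm:main}.

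It remains to show that $\mathcal{S}[e^{W_{\gamma_j}}](\hbar) \to 1$ as $\hbar \to 0$ with $\Re(\hbar) > 0$. The potential constructed in Section~\ref{sec:VorosSymbolsAsClusterCoordinates} has the form $Q(z,\hbar) = Q_0(z) + \hbar^2 Q_2(z)$, so $Q_1 \equiv 0$; the first WKB recursion then yields that the $\hbar^0$-coefficient of $S_{\text{odd}}$ equals $Q_1/(2\sqrt{Q_0}) = 0$, so $W_{\gamma_j}$ is a formal power series in $\hbar$ whose expansion begins at $\hbar^1$. By Theorem~\ref{thm:Borelsummability}(1) this series is Borel summable, and by property~(3) of Borel sums applied to the entire function $e^t$, $\mathcal{S}[e^{W_{\gamma_j}}](\hbar) = \exp(\mathcal{S}[W_{\gamma_j}](\hbar))$. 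Since the Borel sum is asymptotic to its defining series on $\mathbb{H}(\varepsilon)$ and $W_{\gamma_j} = O(\hbar)$, the right-hand side tends to $e^0 = 1$, yielding
\[
\mathcal{Y}_j(\hbar) \cdot \exp(Z_{\gamma_j}/\hbar) = \mathcal{S}[e^{W_{\gamma_j}}](\hbar) \longrightarrow 1.
\]
The main obstacle is really the sign bookkeeping in the preceding step, which must be verified separately for the regular and self-folded cases; the analytic content of the theorem is then essentially immediate from the properties of Borel summation reviewed in Section~\ref{sec:BorelResummation}.
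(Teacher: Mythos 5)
Your argument matches the paper's proof: restrict to $\mathbb{H}(\varepsilon)$ where Theorem~\ref{thm:main} identifies $\mathcal{Y}_j$ with $\mathcal{S}[e^{V_{\gamma_j}}]$, split off the $\hbar^{-1}\oint_{\gamma_j}\sqrt{Q_0}\,dz$ part, and observe that the remaining exponent is a formal series beginning at order $\hbar$ (because $Q_1\equiv0$ in the potential built in Section~\ref{sec:VorosSymbolsAsClusterCoordinates}). Your extra bookkeeping about the sign identity $\oint_{\gamma_j}\sqrt{Q_0}\,dz=-Z_{\gamma_j}$ and the asymptotic property of Borel sums just makes explicit what the paper compresses into ``The result follows.''
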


\begin{proof}
By Theorem~\ref{thm:main}, the restriction of $\mathcal{Y}_j(\hbar)$ to the domain $\mathbb{H}(\varepsilon)$ equals the Borel sum of the Voros symbol associated to the arc $j$. From the recursion relations used to define the 1-form $S_{\text{odd}}(z,\hbar)dz$, one sees that this expression can be written 
\[
S_{\text{odd}}(z,\hbar)dz=\frac{1}{\hbar}\sqrt{Q_0(z)}dz+\sum_{k\geq1}\hbar^kS_{\text{odd},k}(z)dz
\]
where the sum in the second term is a power series in~$\hbar$ without a constant term. Thus the Voros symbol can be written 
\[
\exp(V_{\gamma_j})=\exp\left(\frac{1}{\hbar}\int_{\gamma_j}\sqrt{Q_0(z)}dz\right)\exp\left(\sum_{k\geq1}\hbar^{k}\int_{\gamma_j}S_{\text{odd},k}(z)dz\right).
\]
The result follows.
\end{proof}

\section*{Acknowledgments}
\addcontentsline{toc}{section}{Acknowledgements}

The results of this paper are part of joint work with Tom~Bridgeland, who contributed many important ideas. I am grateful to Kohei~Iwaki, Tatsuya~Koike, Andrew~Neitzke, and Yoshitsugu~Takei for helpful discussions about exact WKB analysis.

\end{document}